\newcommand{\R}{{\mathbb R}}
\newcommand{\mop}{\mathop}
\newtheorem{ass}{Assumption}
\newtheorem{lemma}{Lemma}		
\newtheorem{defi}{Definition}   
\newtheorem{rema}{Remark}
\newtheorem{coro}{Corollary}
\newtheorem{thm}{Theorem}
\newsavebox\myboxA
\newsavebox\myboxB
\newlength\mylenA
\newcommand*\xoverline[2][0.75]{%
    \sbox{\myboxA}{$\m@th#2$}%
    \setbox\myboxB\null
    \ht\myboxB=\ht\myboxA%
    \dp\myboxB=\dp\myboxA%
    \wd\myboxB=#1\wd\myboxA
    \sbox\myboxB{$\m@th\overline{\copy\myboxB}$}
    \setlength\mylenA{\the\wd\myboxA}
    \addtolength\mylenA{-\the\wd\myboxB}%
    \ifdim\wd\myboxB<\wd\myboxA%
       \rlap{\hskip 0.5\mylenA\usebox\myboxB}{\usebox\myboxA}%
    \else
        \hskip -0.5\mylenA\rlap{\usebox\myboxA}{\hskip 0.5\mylenA\usebox\myboxB}%
    \fi}
\date{}
\begin{document}


\title{Convergence and Synchronization in heterogeneous networks of smooth and piecewise smooth systems\thanks{Preprint submitted to Automatica on May 7th 2013. P. DeLellis, M. di Bernardo, D. Liuzza are with the Department of Systems and Computer Engineering. M. di Bernardo is also with the Department of Engineering Mathematics, University of Bristol, BS8 1UB, UK. Email:\{pietro.delellis\}\{mario.dibernardo\}\{davide.liuzza\}@unina.it}}
\author{Pietro DeLellis, Mario di Bernardo, Davide Liuzza}
\maketitle
\begin{abstract}                          
This paper presents a framework for the study of convergence when the nodes' dynamics may be both piecewise smooth and/or nonidentical across the network. Specifically, we derive sufficient conditions for global convergence of all node trajectories towards the same bounded region of their state space. The analysis is based on the use of set-valued Lyapunov functions and  bounds are derived on the minimum coupling strength required to make all nodes in the network converge towards each other. We also provide an estimate of the asymptotic bound $\epsilon$ on the mismatch between the node states at steady state. The analysis is performed both for linear and nonlinear coupling protocols. The theoretical analysis is extensively illustrated and validated via its application to a set of representative numerical examples.
\end{abstract}


\section{Introduction}  
The problem of taming the collective behaviour of a network of dynamical systems is one of the key challenges in modern control theory, see for example \cite{LiSl:11,Me:07} and references therein. Typically, the ``simplest'' problem is to make all agents in the network evolve asymptotically onto a common synchronous solution. This problem is relevant in a number of different applications \cite{BaTa:10,DoBu:12,EuGu:04,JeTo:00,KaBu:10} and has been the subject of much ongoing research (see for example \cite{cncs:07,ChBa:13,HiZh:12,YuChLu:09,YuDe:12}). 

In general, a network is modeled as an ensemble of $N$ interacting dynamical systems \cite{bola:06,neba:06} or ``agents''. Each system is described by a set of nonlinear ordinary differential equations (ODEs) of the form $\dot x_i = f_i(t,x_i)$, where $x_i \in \mathbb{R}^n$ is the state vector and $f_i: \mathbb{R}^+\times\mathbb{R}^n \rightarrow \mathbb{R}^n$ is a nonlinear vector field describing the system dynamics, often assumed sufficiently smooth and differentiable. The coupling between neighboring nodes is assumed to be a nonlinear function $\eta: \mathbb{R}^+\times\mathbb{R}^n \rightarrow \mathbb{R}^n$ (often called output or coupling function) of their states.
Hence, the equations of motion for the generic $i$-th system in the network are:
\begin{equation}\label{eq:general_net}
\frac{\mathrm{d}x_i}{\mathrm{d}t}=f_i(t,x_i)-c\sum_{j=1}^N { a_{ij} \eta(t,x_i, x_j)}, \qquad \forall i=1,\dots,N
\end{equation}
where $x_i$ represents the state vector of the $i$-th agent, $c$ is the overall strength of the coupling, and 
$a_{ij}=a_{ji}\ge0$ is positive if there is an edge between nodes $i$ and $j$ and $0$ otherwise.

Different strategies have been proposed to solve the problem of making all agents in the network converge onto the same solution. Examples include strategies for consensus in networks when the agents are linear and coupled diffusively \cite{OlFa:07,OlMu:04,YuCh:10}, adaptive approaches to consensus and synchronization \cite{ChBa:13,DaLe:12,dediga:09,dedigapo:09,LiZh:08,ZhKu:06} and methods based on distributed leader-follower (or pinning control) techniques among many others \cite{AbPo:12,ChCh:09,DediPo:11,LiCh:08,ZhLe:11}.

Most of the results available in the existing literature rely on the following assumptions which are essential to simplify the study of model \eqref{eq:general_net} and its convergence. Namely, it is often assumed that
\begin{enumerate}
\item the output functions are linear, time-invariant, and typically depending upon the mismatch between the states of neighbouring nodes, i.e.  $\eta(t, x_i,x_j)=\gamma_{ij}(x_i-x_j),\ \gamma_{ij}\in \mathbb{R}^+$;
\item the nodes'  vector fields, $f_i$, are sufficiently smooth and differentiable 
\item all nodes share the same dynamics, i.e. $f_i=f_j$, for $i,j=1,\ldots,N$.
\end{enumerate}
Under the above assumptions,
the stability and convergence of network \eqref{eq:general_net} have been investigated in depth over the last two decades, and interesting results have been obtained (for a review see \cite{bola:06,dedigo:10,neba:06,piro:01,xich:07}). 
For example, when all nodes are identical and described by smooth vector fields, conditions can be derived under which asymptotic convergence (or complete synchronization) is guaranteed. Namely, it is possible to prove that all nodes asymptotically converge onto the manifold in state space where $x_1=x_2=\ldots=x_N$.

Unfortunately, in many real-world networks it is often unrealistic to assume that all nodes share the same identical dynamics. Think for example of biochemical or power networks were parameter mismatches between agents are unavoidable and usually rather large  \cite{bale:00,DoBu:12,hich:06,kupa:04,thou:01,wabu:06}. The problem of coordination among heterogeneous nodes is relevant in Networked Cyber-Physical Systems \cite{KiSt:10,Le:08}. Also, in many cases the models in use to describe the dynamics of the nodes in the network are far from being continuous and differentiable. Notable cases include the coordinated motion of mechanical oscillators with friction \cite{he:02,peva:04,peva:03}, switching power devices \cite{mota:06,page:05}, switch-like models of behaviours of biological cells in pattern formation \cite{shsp:11}, and all those networks whose nodes' dynamics are affected by discontinuous events on a macroscopic timescale. 
The aim of this paper is to study the challenging open problem of characterising  convergence and synchronization in networks whose nodes' dynamics are nonidentical and possibly described by piecewise smooth vector fields.  

In this case, asymptotic convergence is only possible in specific cases; for example, when all nonidentical nodes share the same equilibrium \cite{xich:07}, for specific nodes' dynamics, or in the case where symmetries exist in the network structure \cite{duch:09,feko:05,zhhi:11}. Nonetheless, for more general complex network models, these assumptions have to be relaxed. Hence, when either a mismatch is present in the network parameters  and/or perturbations are added to the vector field of the nodes, it is often desirable to prove bounded (rather than asymptotic) convergence of all nodes towards each other.
As an example, in power networks, asymptotic convergence of all generator phases towards the same solution cannot be achieved and 
it is considered acceptable that the phase angle differences remain within given bounds \cite{DoBu:12,hich:06,kupa:04}.

In the literature, few results are available on bounded convergence of networks of nonidentical nodes. In particular, the case of parameters' mismatches is studied assuming that the nodes' dynamics are \emph{eventually dissipative} 
\cite{bebe:03}, or assuming a priori that the node trajectories are bounded \cite{huli:09,lilu:12}. Local stability of networked systems with small parameter mismatches is studied extending the Master Stability Function approach in \cite{subo:09}. As for additive perturbations, the
specific case of additive noise was considered in \cite{lich:07,phta:09}.
A first attempt on giving more general conditions for bounded convergence can be found in 
\cite{hizh:08}. However, the key assumptions guaranteeing global stability results are difficult to check in practice. Indeed, assumptions given in \cite{hizh:08} rely on boundedness of the average node vector field, defined as $\sum_{i=1}^N f_i/N$, and of its Jacobian evaluated on the average network trajectory, which is unknown a priori.

In networks of piecewise smooth systems, guaranteeing convergence is a cumbersome task even when the nodes' vector fields are identical and only few results are currently available \cite{da:02,rudi:11}. Specifically, in \cite{da:02}, local synchronization of two coupled continuously differentiable systems with a specific additive sliding action is guaranteed with conditions on the generalized Jacobian of the error system, while in \cite{rudi:11} convergence of a network of time switching systems is analyzed when the switching signal is synchronous between all nodes.

To the best of our knowledge, none of the approaches in the existing literature can deal with the generic case of networks characterized by the presence of both piecewise smooth and nonidentical nodes' dynamics. 
The main contributions of this paper can be summarized as follows.
\begin{enumerate}
\item Sufficient conditions are derived using  set-valued Lyapunov functions for global bounded convergence of all network nodes towards each other. Moreover, explicit bounds are estimated for the residual tracking error and the value of the minimum coupling strength among nodes guaranteeing convergence.
\item The classical assumption of linear diffusive coupling functions is relaxed. Our stability analysis also encompasses continuous or PWS nonlinear coupling protocols.
\item When applied to networks of nonidentical smooth systems, the general conditions derived in this paper give sufficient conditions for global bounded convergence that are much easier to check or verify if compared to those given in the existing literature reviewed above.
\item This paper significantly extends the preliminary results reported in \cite{da:02,rudi:11} guaranteeing boundedness of the synchronization error in networks of piecewise smooth systems. In particular, results of bounded convergence are found for a wider class of systems and of possible switching signals than those in \cite{da:02,rudi:11}.
\end{enumerate}
The rest of the paper is structured as follows. In Section \ref{sec:pws}, some background is given on PWS dynamical systems. Then, in Section \ref{sec:mathematicalpreliminaries}, the network model  of interest is presented together with relevant mathematical preliminaries used in the rest of the paper. In Section \ref{sec:totally},  bounded convergence for linearly coupled networks is investigated. The analysis is then extended  in Section \ref{sec:partially}  to the case of nonlinearly coupled networks. All through the presentation, a set of representative examples is used to illustrate the application of the theoretical derivation. Conclusions are drawn is Section \ref{sec:con}.

\section{Piecewise smooth dynamical systems}\label{sec:pws}
In this section, we introduce some notation and review some concepts and definitions on PWS dynamical systems that will be used throughout the paper.  $I_s$ denotes the $s\times s$ identity matrix, $1_s$ is the $s$-dimensional
 vector $[1,\ldots,1]^T$, $\left\|\cdot\right\|_p$ denotes the matrix (vector) $p$-norm, $\lambda_{\mathrm{max}}(M)$ denotes the maximum eigenvalue of a matrix $M$, and $\mathrm{diag}\{m_i\}_{i=1}^s$ is the $s\times s$ diagonal matrix whose diagonal elements are $m_1,\ldots,m_s$. Given a matrix $M$, its positive (semi) definiteness is denoted by $M>0$ ($M\ge 0$). Furthermore, with $\mathcal D$ we denote the set of diagonal matrices and with $\mathcal D^+$ the set of positive definite diagonal matrices.

Now, we give the definition of a PWS dynamical system according to \cite{dibu:07}, p.73.

\begin{defi}\label{def:pws}
Let us consider a finite collection of disjoint, open and non-empty sets  $\mathcal{S}_1,\dots \mathcal{S}_p$, such that $\mathrm{D}\subseteq\bigcup_{k=1}^p\bar{\mathcal{S}}_k\subseteq\R^{n}$ is a connected set, and that the intersection $\Sigma_{hk}:=\bar{\mathcal{S}}_h\cap\bar{\mathcal{S}}_k$ is either a $\R^{n-1}$ lower dimensional manifold or it is the empty set. 
A dynamical system $\dot{x}=f(t,x)$, with $f:\R^+\times\mathrm{D} \mapsto\R^n$, is called a piecewise smooth dynamical system when it is defined by a finite set of ODEs, that is, when
\begin{equation} 
f(t,x)=F_k(t,x)\qquad x\in\mathcal{S}_k,\,k=1,\ldots,p,\label{eq:pws_sys}
\end{equation} 
with each vector field $F_k(t,x)$ being smooth in both the state $x$ and the time $t$ for any $x\in\mathcal{S}_k$. Furthermore, each $F_k(t,x)$ is continuously extended on the boundary $\partial \mathcal{S}_k$. 
\end{defi}
Notice that in the above definition  the value the function $f(\cdot)$ assumes on the boundaries $\partial \mathcal{S}_k$ is left undefined. For PWS system \eqref{eq:pws_sys}, different solution concepts can be defined (see \cite{co:08} and references therein). In this paper, we focus on {\em Filippov solutions} \cite{fi:88}. These solutions are absolutely continuous curves $x(t):\R\mapsto\R^n$ satisfying, for almost all $t$, the differential inclusion:
\begin{equation}\label{eq:diff_inclusion}
\dot{x}(t)\in\mathcal{F}[f](t,x),
\end{equation}
where $\mathcal{F}[f](t,x)$ is the {\em Filippov set-valued function} $\mathcal{F}[f]:\R^+\times \R^n\mapsto\mathfrak{B}(\R^n)$,  with $\mathfrak{B}(\R^n)$  being the collection of all subsets in $\R^n$, defined as
\begin{equation}\label{eq:filippov_set_valued_map}
\mathcal{F}[f](t,x)=\bigcap_{\delta>0}\bigcap_{m(\mathcal{S})=0}\overline{co}\left\{f(t,\mathcal{B}_{\delta}(x)\textrm{\textbackslash} \mathcal{S}) \right\},
\end{equation}
$\mathcal{S}$ being any set of zero Lebesgue measure $m(\cdot)$, $\mathcal{B}_{\delta}(x)$  an open ball centered at $x$ with radius $\delta>0$, and $\overline{co}\left\{\mathcal{I}\right\}$ denoting the convex closure of a set $\mathcal{I}$.

We remark that, for the piecewise smooth system \eqref{eq:pws_sys}, a Filippov solution exists under the mild assumption of local essential boundedness of the vector field $f$, see \cite{co:08} for further details. In the rest of this paper, we assume that the PWS system \eqref{eq:pws_sys} is defined in the whole state space $\R^n$, so that $x\in\mathrm{D} \equiv \R^n$.

Computing the Filippov set-valued function (\ref{eq:filippov_set_valued_map}) can be a nontrivial task. Here, we report three useful rules that can be used to ease the computations \cite{PaSa:87}:

\begin{description}
\item[Consistency:]{\em  If $f:\R^+\times\R^n \mapsto\R^n$ is continuous at $(t,x)\in\R^+\times\R^n$, then  
\[
\mathcal{F}[f](t,x)=\left\{f(t,x)\right\}.
\]
\item[{\em Sum:}] If $f_1,f_2:\R^+\times\R^n\mapsto\R^n$ are locally bounded at $(t,x)\in\R^+\times\R^n$, then}
\[
\mathcal{F}[f_1+f_2](t,x)\subseteq\mathcal{F}[f_1](t,x)+\mathcal{F}[f_2](t,x).
\]
\end{description}
{\em Moreover, if either $f_1$ or $f_2$ is continuous at $(t,x)$, then the equality holds.}
\begin{description} 
\item[Product:]
{\em If $f_1,f_2:\R^+\times\R^n\mapsto\R^n$ are locally bounded at $(t,x)\in\R^+\times\R^n$, then}
\[
\mathcal{F}\left[\left(f_1^T,f_2^T\right)^T\right](t,x)\subseteq \mathcal{F}[f_1](t,x)\times\mathcal{F}[f_2](t,x).
\]
\end{description}
{\em Moreover, if either $f_1$ or $f_2$ is continuous at $(t,x)$, then equality holds.}

A PWS system is not differentiable everywhere in its domain. Nonetheless, as reported in \cite{cl:83}, the Rademacher's Theorem states that a function which is locally Lipschitz is differentiable almost everywhere (in the sense of Lebesgue). Then, it is useful to extend the classical gradient definition.
Denoting with $\Omega_u$ the zero-measure set of points at which a given function $u$ fails to be differentiable, we report the following definition \cite{cl:83,co:08}.

\begin{defi}
Let $u:\R^n\mapsto \R$ be a locally Lipschitz function, and let $\mathcal{S}\subset \R^n$ be an arbitrary set of zero measure, we define the {\em generalized gradient} (also termed {\em Clarke subdifferential}) $\partial u:\R^n\mapsto\mathfrak{B}(\R^n)$ of $u$ at any $x\in\R^n$ as
\[
\partial u(x)=co\left\{ \lim_{k\rightarrow \infty} \frac{\partial }{\partial x}u(x_k):x_k\rightarrow x,x_k\notin \mathcal{S}\cup \Omega_u \right\}.
\]
\end{defi}
Notice that, if $u$ is continuously differentiable, then it is possible to prove that $\partial u(x)=\left\{\frac{\partial }{\partial x} u(x)\right\}$, see \cite{co:08}.
\begin{defi}
\cite{co:08} Given a locally Lipschitz function $u:\R^n\mapsto \R$ and a vector field $f:\mathbb{R}^n\rightarrow\mathbb{R}^n$, the {\em set-valued Lie derivative} ${\mathop \mathcal{L} \limits_\sim}{}_{\mathcal{F}[f]}:\R^n\mapsto\mathfrak{B}(\R)$ of $u$ with respect to $\mathcal{F}[f]$ at $x$ is defined as
\[
{\mathop \mathcal{L} \limits_\sim}{}_{\mathcal{F}[f]}u(x):=\left\{a\in\R \,s.t. \textrm{ there exists $v\in\mathcal{F}[f](x) \Rightarrow \varrho^Tv=a$ for all $\varrho\in\partial u(x)$}\right\}.
\] 
\end{defi}
\begin{lemma} \cite{bace:99,co:08}
Let $x(t)$ be a solution of the differential inclusion \eqref{eq:diff_inclusion}, \eqref{eq:filippov_set_valued_map}, and let $u:\R^n\mapsto \R$ be locally Lipschitz and regular. Then, the following statements hold:
\begin{itemize} 
\item [i)] The composition $t\in\mathbb{R}^+\mapsto u(x(t))\in\R$ is differentiable for almost every $t$;
\item [ii)] The derivative of $t \mapsto u(x(t))$ satisfies
\[
\frac{d}{dt}u(x(t))\in{\mathop \mathcal{L} \limits_\sim}{}_{\mathcal{F}[f]}u(x)
\]
for almost every $t$.
\end{itemize}
\end{lemma}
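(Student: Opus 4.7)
The plan is to reduce both statements to absolute continuity arguments combined with the non-smooth chain rule for regular, locally Lipschitz functions.

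For part (i), I would first recall that any Filippov solution of \eqref{eq:diff_inclusion}--\eqref{eq:filippov_set_valued_map} is absolutely continuous by definition, hence differentiable at almost every $t$ with $\dot{x}(t)\in\mathcal{F}[f](t,x(t))$. Since $u$ is locally Lipschitz, the composition $t\mapsto u(x(t))$ is the composition of a Lipschitz map with an absolutely continuous curve on every compact time interval, and is therefore itself absolutely continuous. By standard Lebesgue theory this composition is differentiable almost everywhere, which yields (i).

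For part (ii), I would restrict attention to the full-measure set of times $t_0$ at which both $\dot{x}(t_0)$ exists and belongs to $\mathcal{F}[f](t_0,x(t_0))$ and at which $(u\circ x)'(t_0)$ exists. Writing $x(t_0+h)=x(t_0)+h\dot{x}(t_0)+o(h)$ and using the local Lipschitz constant of $u$ to absorb the $o(h)$ remainder, one obtains
\[
\left.\frac{d}{dt}u(x(t))\right|_{t=t_0}=\lim_{h\to 0}\frac{u(x(t_0)+h\dot{x}(t_0))-u(x(t_0))}{h},
\]
so that the time derivative of $u\circ x$ at $t_0$ coincides with the two-sided directional derivative of $u$ at $x(t_0)$ along $\dot{x}(t_0)$. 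Exploiting the regularity of $u$, the one-sided directional derivatives exist and equal the Clarke generalized directional derivative $\max_{\zeta\in\partial u(x(t_0))}\zeta^{T}v$. Splitting the limit into right and left parts then gives
\[
\max_{\zeta\in\partial u(x(t_0))}\zeta^T\dot{x}(t_0)=\left.\frac{d}{dt}u(x(t))\right|_{t=t_0}=\min_{\zeta\in\partial u(x(t_0))}\zeta^T\dot{x}(t_0),
\]
which forces $\zeta^T\dot{x}(t_0)$ to be constant over $\zeta\in\partial u(x(t_0))$. Taking $v=\dot{x}(t_0)\in\mathcal{F}[f](t_0,x(t_0))$ in the definition of ${\mathop \mathcal{L} \limits_\sim}{}_{\mathcal{F}[f]}u$ then yields (ii).

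The principal difficulty lies in this chain-rule step, where one must convert the mere existence of the ordinary derivative of $u\circ x$ into an algebraic constraint on $\partial u(x(t_0))$. The Lipschitz absorption of the $o(h)$ error is routine, but the crucial leverage comes from Clarke regularity, which equates the one-sided directional derivatives of $u$ with $\max_{\zeta\in\partial u(x)}\zeta^T v$. Without this property the one-sided directional derivatives need not coincide with the extrema of $\zeta^T\dot{x}(t_0)$ over $\partial u(x(t_0))$, and one could not force the value of $(u\circ x)'(t_0)$ to satisfy the pointwise equality demanded by the definition of the set-valued Lie derivative.
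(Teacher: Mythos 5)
Your proof is correct: the paper itself states this lemma without proof, citing \cite{bace:99,co:08}, and your argument is a faithful reconstruction of the standard one given there (absolute continuity of $u\circ x$ for part (i), and for part (ii) the chain-rule argument that equates the right and left derivatives with $\max_{\zeta\in\partial u}\zeta^T\dot{x}(t_0)$ and $\min_{\zeta\in\partial u}\zeta^T\dot{x}(t_0)$ via Clarke regularity, forcing $\zeta^T\dot{x}(t_0)$ to be constant on $\partial u(x(t_0))$). No gaps; the use of $\dot{x}(t_0)\in\mathcal{F}[f](t_0,x(t_0))$ at almost every $t_0$ as the witness $v$ in the definition of the set-valued Lie derivative is exactly the right closing step.
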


Notice that a convex function is also regular, see, for instance, \cite{cl:83}.

To simplify the notation, in what follows the set valued function $\mathcal{F}[f](t,x)$ is equivalently denoted by $\mop{f}\limits_\sim(t,x)$, while an element of $\mop{f}\limits_\sim(t,x)$ is denoted by $\mop f\limits^{\sim}(t,x)$. Now, we define the class of {\em QUAD} PWS vector fields, that will be considered throughout the paper.
\begin{defi}\label{def:quad}
Similarly to what stated in \cite{dedi:11} we say that, given a pair of $n\times n$ matrices $P\in\mathcal D^+$,  $W\in\mathcal D$, a PWS vector field $f:\mathbb{R}^+ \times \R^n \rightarrow \mathbb{R}^n$ is QUAD(P,W) if and only if the following inequality holds:
\begin{equation}\label{eq:quad}
(x-y)^TP\left[\mathop f\limits ^{\sim}(t,x)-\mathop f\limits ^{\sim}(t,y)\right]\leq (x-y)^T W (x-y),
\end{equation}
for all $x,y\in\mathbb R^n,\, t\in\mathbb{R}^+, \mathop f\limits ^{\sim}(t,x)\in\mop{f}\limits_\sim (t,x), \mathop f\limits ^{\sim}(t,y)\in\mop{f}\limits_\sim (t,y)$.
\end{defi}
Note that this property is equivalent to the well-known one-sided Lipschitz condition for $P=I_n$ and $W=w I_n$ \cite{co:08}. Furthermore, the QUAD condition is also related to some relevant properties of the vector fields, such as contraction properties for smooth systems and the classical Lipschitz condition, see \cite{dedi:11} for further details.

We extend the QUAD condition to PWS systems as follows.
\begin{defi}\label{def:quada}
A PWS system  is said to be {\em QUAD(P,W) Affine} iff its vector field can be written in the form:
\begin{equation}\label{eq:quad_affine}
f(t,x(t))=h(t,x(t))+g(t,x(t)),
\end{equation}
where:
\begin{enumerate}
\item $h$ is either a continuous or piecewise smooth QUAD(P,W) function.
\item $g$ is either a continuous or piecewise smooth function such that there exists a positive scalar $M<+ \infty$ satisfying 
\begin{equation}
\left|\left|\mathop g\limits ^{\sim}(t,x(t))\right|\right|_2<M, \qquad\quad \forall x\in\mathbb{R}^{n}, \forall t\in\mathbb{R}^+,  \forall \mathop g\limits ^{\sim}(t,x(t))\in \mop g \limits _\sim (t,x(t)
\nonumber
\end{equation}
\end{enumerate}
\end{defi}

It is worth mentioning that QUAD Affine systems can exhibit sliding mode and chaotic solutions, so this hypothesis on the nodes' dynamics does not exclude typical behaviors that may arise in PWS systems (see Sec. \ref{sec:examples} for some representative examples). 


\section{Network model and problem statement}\label{sec:mathematicalpreliminaries}
In what follows, we analyze the general model \eqref{eq:general_net} of networks of nonidentical (piecewise) smooth systems, where we assume that the output function is either a nonlinear or linear function of the state mismatch among neighbouring nodes. Specifically, in the linear case we have:
\begin{equation}\label{eq:genericnetwork_linear}
\frac{\mathrm{d}x_i}{\mathrm{d}t}=f_i(t,x_i)-c\sum_{j=1}^N { a_{ij}\Gamma (x_i-x_j)},
\end{equation}
where $\Gamma \in\mathbb{R}^{n\times n}$ is the so-called inner coupling matrix determining what state variables are involved in the coupling (see \cite{bola:06,dedigo:10}). When the coupling is nonlinear we get instead:
\begin{equation}\label{eq:genericnetwork}
\frac{\mathrm{d}x_i}{\mathrm{d}t}=f_i(t,x_i)-c\sum_{j=1}^N { a_{ij}\eta(t,x_i-x_j)}.
\end{equation}

In the following sections, we investigate bounded convergence in the networks above. To give a formal definition of bounded convergence, 
we rewrite \eqref{eq:genericnetwork_linear}-\eqref{eq:genericnetwork} in terms of the convergence error defined for each node as $e_i=\left[e_{i}^{(1)},\ldots,e_{i}^{(n)}\right]^T$ with
\begin{equation}
 e_i=x_i-\bar{x},\label{eq:error},
\end{equation}
$\bar{x}$ being the \emph{average (node) trajectory} defined by
\begin{equation}
 \bar{x}=\frac{1}{N}\sum_{j=1}^N x_j.\label{eq:average_sys} 
\end{equation}
Using \eqref{eq:error} and \eqref{eq:average_sys}, from \eqref{eq:genericnetwork_linear} we obtain
\begin{equation}
 \dot{e}_i=f_i(t,x_i)-\frac{1}{N}\sum_{j=1}^{N}f_j(t,x_j)-c\sum_{j=1}^N { a_{ij}\Gamma (e_i-e_j)},\label{eq:error_dyn2}
\end{equation}
while from \eqref{eq:genericnetwork} we have
\begin{equation}
 \dot{e}_i=f_i(t,x_i)-\frac{1}{N}\sum_{j=1}^{N}f_j(t,x_j)-c\sum_{j=1}^N { a_{ij} \eta(t,e_i-e_j)},\label{eq:error_dyn1}
\end{equation} 

\begin{defi}
We say that network \eqref{eq:genericnetwork} (or \eqref{eq:genericnetwork_linear}) exhibits $\epsilon$\emph{-bounded convergence} iff
\begin{equation}\label{eq:lim_error_bounded}
\mathop{\lim}_{t\rightarrow\infty}\left|\left|e(t)\right|\right|_2\le\epsilon,
\end{equation}
with $e(t)=[e_1^T(t),\ldots,e_N^T(t)]^T$, $\epsilon\in\mathbb{R}^+$ and $\left\|\cdot\right\|_2$ representing the usual Euclidean norm.\footnote{The use of the symbol $\lim$ in \eqref{eq:lim_error_bounded} is not intended in the classical sense of \textsl{limit}. By \eqref{eq:lim_error_bounded}, we mean that for all $\nu >0$ there exists a 
$t_\nu>0$ such that for all $t>t_\nu$ we have that $\| e(t)\|_2\leq \epsilon+\nu$. We remark that this does not imply the existence of the limit in a classical sense.} 
\end{defi}

In what follows, we often use a compact notation both for the network state equations \eqref{eq:genericnetwork} and \eqref{eq:genericnetwork_linear}, and for the network error equations \eqref{eq:error_dyn1} and \eqref{eq:error_dyn2}. To this aim, we introduce the stack vector $x:=[x_1^T,\ldots,x_N^T]^T$ of all node states. Furthermore, assuming the node vector fields are QUAD affine, we call $\Phi(t,x)=\left[h_1^T(t,x_1),\dots, h_N^T(t,x_N) \right]^T $ the stack vector of the QUAD components, $\Psi(t,x)=\left[g_1^T(t,x_1),\dots, g_N^T(t,x_N) \right]^T $ the stack vector of the Affine components, and $\Xi=-1_N\otimes \frac{1}{N}\sum_{j=1}^{N}f_j(t,x_j)$ the term taking into account the dynamics of the average state, with $\mathbf{1}_N$ being the vector of $N$ unitary entries.  In this way, equations \eqref{eq:genericnetwork} and the error equation \eqref{eq:error_dyn1} can be recast, respectively, as

\begin{equation}\label{eq:state_stack}
\dot{x}=\Phi(t,x)+\Psi(t,x)-c\mathrm{H}(t,x),
\end{equation}
\begin{equation}\label{eq:error_stack}
\dot{e}=\Phi(t,x)+\Psi(t,x)+\Xi(t,x)-c\mathrm{H}(t,e),
\end{equation}
with
\[
\mathrm{H}(t,x)=\left[\begin{array}{c}
\sum_{j=1}^N { a_{1j}\eta(t,x_1-x_j)} \\
\vdots \\
\sum_{j=1}^N { a_{Nj}\eta(t,x_N-x_j)}
\end{array}
\right].
\]

If we define the Laplacian matrix $L = \left[\ell_{ij}\right]$ as
\begin{equation*}
\ell_{ij}=
\begin{cases}
-a_{ij}, & \textrm{if $i\neq j$ and $(i,j)\in\mathcal{E}$}\\ 
0, & \textrm{if $i\neq j$ and $(i,j)\notin\mathcal{E}$}\\ 
\displaystyle \sum_{{k=1}\atop {k\ne i}}^{N}a_{ik}, & \textrm{if $i=j$}
\end{cases},
\end{equation*} 
where $\mathcal{E}$ is the set of all the network edges, then, in the case of networks with linear coupling, the state equation \eqref{eq:genericnetwork_linear} and the error equation \eqref{eq:error_dyn2} can be recast as
\begin{equation}\label{eq:state_stack_linear}
\dot{x}=\Phi(t,x)+\Psi(t,x)-c(L\otimes\Gamma)x,
\end{equation}
\begin{equation}\label{eq:error_stack_linear}
\dot{e}=\Phi(t,x)+\Psi(t,x)+\Xi(t,x)-c\left(L\otimes \Gamma\right)e.
\end{equation}

Before giving the main results in the next sections, we recall here a useful lemma and define matrix sets that will be used in the paper.
\begin{lemma}\label{lem}
(\cite{GoRo:01}, pp. 279-288)
\begin{enumerate}
\item The Laplacian matrix $L$ in a connected undirected network is positive semi-definite. Moreover, 
it has a simple eigenvalue at $0$ and all the other eigenvalues are positive.
\item the smallest nonzero eigenvalue $\lambda_2(L)$ of the Laplacian matrix satisfies
\[
 \lambda_2(L)=\min_{z^T 1_N=0,z\ne0}\frac{z^T L z}{z^T z}.
\]
\end{enumerate}
\end{lemma}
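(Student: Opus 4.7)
The plan is to treat the two parts of the lemma separately, both resting on the standard representation $L = D - A$, where $D$ is the diagonal degree matrix and $A$ the symmetric adjacency matrix of the network. The key algebraic identity I will rely on throughout is that, for any $z \in \mathbb{R}^N$, one has $z^T L z = \tfrac{1}{2}\sum_{i,j=1}^{N} a_{ij}(z_i - z_j)^2$, obtained in one line from $\ell_{ii} = \sum_{j \ne i} a_{ij}$ and $\ell_{ij} = -a_{ij}$. This identity is the workhorse of the whole argument.

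For part 1, the quadratic-form identity immediately gives $z^T L z \geq 0$ for all $z$, establishing positive semi-definiteness. That $0$ is an eigenvalue is immediate from $L 1_N = 0$, since every row of $L$ sums to zero. The nontrivial step is the simplicity of this zero eigenvalue: if $z \in \ker L$, then $z^T L z = 0$ forces $a_{ij}(z_i - z_j)^2 = 0$ for every pair $(i,j)$, hence $z_i = z_j$ along every edge. Connectedness of the graph then propagates this equality along any path joining two arbitrary nodes, so $z$ is constant and $\ker L = \mathrm{span}\{1_N\}$, which gives a simple zero eigenvalue and forces all remaining eigenvalues to be strictly positive.

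For part 2, I would invoke the Courant--Fischer variational characterization for symmetric matrices. Order the eigenvalues as $0 = \lambda_1 < \lambda_2 \leq \cdots \leq \lambda_N$ with a corresponding orthonormal eigenbasis $v_1 = 1_N/\sqrt{N}, v_2, \ldots, v_N$. By the spectral theorem, the second smallest eigenvalue equals the minimum of the Rayleigh quotient $z^T L z / z^T z$ over $z \perp v_1$, and the constraint $z^T 1_N = 0$ is exactly $z \perp v_1$. Expanding $z = \sum_{k \geq 2} c_k v_k$ then yields $z^T L z = \sum_{k \geq 2} \lambda_k c_k^2 \geq \lambda_2 \|z\|_2^2$, with equality attained at $z = v_2$.

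The main obstacle, if one can call it that, is purely expository: the graph-theoretic step identifying $\ker L$ with $\mathrm{span}\{1_N\}$ under connectedness requires a short combinatorial path-propagation argument, while the Rayleigh-quotient characterization of $\lambda_2$ is a direct consequence of the spectral decomposition. Both pieces are classical and the cited Godsil--Royle reference supplies the standard details.
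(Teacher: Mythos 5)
Your proof is correct. The paper does not prove this lemma at all --- it is quoted directly from Godsil and Royle with a page citation --- and your argument (the quadratic-form identity $z^TLz=\tfrac12\sum_{i,j}a_{ij}(z_i-z_j)^2$, the path-propagation argument identifying $\ker L$ with $\mathrm{span}\{1_N\}$ under connectedness, and the Courant--Fischer characterization of $\lambda_2$) is precisely the standard textbook derivation the citation points to, so there is nothing to reconcile.
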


Finally, we define the sets $\mathcal{Q}$ and $\mathcal{PW}$, that will be used in the rest of the paper and whose relevance will be clarified through a set of numerical example in the following sections.
\begin{defi}\label{def:pw}
Given a vector field $f:\mathbb R^+\times \mathbb{R}^n\rightarrow \mathbb{R}^n$, let $\mathcal Q\subseteq \mathcal{D}^+$ be the (possibly empty) set of matrices such that, for every $P\in\mathcal Q$, there exists a diagonal matrix $W$ such that \eqref{eq:quad} is satisfied. We say that a pair of matrices $(P_i,W_i)$ belongs to the set $\mathcal{PW}$ if and only if $P_i\in\mathcal D^+$ and $W_i\in\mathcal D$, and \eqref{eq:quad} is satisfied for $P=P_i$ and $W=W_i$.
\end{defi}
In the following sections, we provide a set of sufficient conditions for $\epsilon$-bounded convergence. Specifically, in Section \ref{sec:totally}, we study the case of linearly coupled networks of nonidentical piecewise smooth systems. Then, we extend the results to the case of networks coupled through nonlinear protocols in Section \ref{sec:partially}.

\section{Convergence analysis for linearly coupled networks}\label{sec:totally}
We consider a network modeled by equation \eqref{eq:genericnetwork_linear}  of $N$ nonidentical piecewise smooth QUAD($P,W_i$) Affine systems, $i=1,\ldots,N$.
\begin{ass}\label{ass:negativenonidentical} 
$h_i(t,x_i)$ is QUAD($P,W_i$) with $W_i<0$, $\left|\left|{\mathop g\limits ^{\sim}}_i(t,x_i)\right|\right|_2<M_i$ for all ${\mathop g\limits ^{\sim}}_i(t,x)\in{\mathop g\limits _{\sim}}{}_i(t,x)$, $t\in \mathbb{R}^+$, $x\in\R^n$, $i=1,\dots,N$,
\[
\sup_{t\in[0,+\infty)\atop i=1,\dots,N}\left|\left|{\mathop h\limits ^{\sim}}_i(t,0)\right|\right|_2
\leq 
\bar{h}_0<+\infty,
\] 
for all ${\mathop h\limits^{\sim}}_i(t,0)\in{\mathop h\limits_{\sim}}{}_i(t,0)$, and all the systems share a nonempty common set $\mathcal{C}_{\mathcal D^+}\subseteq\mathcal D^+$ such that every $P\in\mathcal{C}_{\mathcal D^+}$ implies $W_i<0$ satisfying inequality \eqref{eq:quad}, for all $i=1,\dots,N$.
%
\end{ass}
We define $\xoverline[.75]{M}$ as
\begin{equation}
\xoverline[.75]{M}=\max_{i=1,\dots, N} M_i.\label{eq:Mbar}
\end{equation}

Before illustrating our result, we need to give the following definitions.
\begin{defi}\label{def:B}
Given that Assumption \ref{ass:negativenonidentical} holds, and considering a $n\times n$ matrix $Q\in\mathcal{C}_{\mathcal D^+}$, the non-empty set $B(Q)\subset\mathbb{R}^{Nn}$ is
\begin{equation}
 B(Q)=\left\{ x\in\mathbb{R}^{Nn}:||x||_2<-\frac{\sqrt{N}||Q||_2\left(\xoverline[.75]{M}+\bar{h}_0\right)}{w_{\mathrm{max}}(Q)}\right\},
\label{eq:abs_domain_x}
\end{equation}
where $\xoverline[.75]{M}$ is defined in \eqref{eq:Mbar} $w_{\mathrm{max}}(Q)=\max\limits_{i=1,\dots,N}\lambda_{\mathrm{max}}\left(W_i(Q)\right)$, with $W_i<0$ such that $(Q,W_i)\in\mathcal{PW}$.

Also, we define the matrix $Q^*$,and the scalar $h_{\mathrm{max}}\in\mathbb{R}^+$ as 
\begin{equation}
 Q^*=\mathop{\mathrm{argmax}}_{Q\in\mathcal{C}_{\mathcal{D}^+}} \frac{||Q||_2\left(\xoverline[.75]{M}+\bar{h}_0\right)}{w_{\mathrm{max}}(Q)},
\label{eq:Qstar}
\end{equation}
\begin{equation}
h_{\mathrm{max}}=\max_{{i=1,\ldots,N\atop z\in B(Q^*)}\atop t\in[0,+\infty)}\left\|{\mathop h\limits ^{\sim}}_i(t,z)\right\|_2, \qquad  \forall {\mathop h\limits ^{\sim}}_i(t,z) \in {\mathop h\limits _{\sim}}{}_i(t,z),\quad \forall i=1,\dots, N.
\label{eq:hmax}
\end{equation}
\end{defi}
Notice that, in what follows, we always refer to the case in which $ h_i$ does not diverge in the finite ball $B(Q^*)$, implying $h_{\max}<\infty$. For this reason, the set-valued function ${\mathop h\limits _{\sim}}{}_i(t,z)$ is bounded for all time instants $t\in\mathbb{R}^+$ and takes values in the ball $B(Q^*)$ of the origin.
Notice also that in \eqref{eq:abs_domain_x} and \eqref{eq:Qstar} we state explicitly the dependence of $w_{\mathrm{max}}$ on $Q$. Indeed, a choice of $Q$ generally implies the selection of suitable matrices $W_1(Q),\dots,W_N(Q)<0$ satisfying relation \eqref{eq:quad}. 

Here, we define 
\begin{equation}\label{eq:W_max}
W^{\max}=\mathrm{diag}\left\{\max_{i=1,\dots,N}\lambda_1(W_i), \dots, \max_{i=1,\dots,N}\lambda_N(W_i) \right\}<0.
\end{equation}
Notice that, in \eqref{eq:W_max}, $W^{\max}$ depends on the choice of $P$, as well as the matrices $W_1,\dots,W_N<0$, and that $(P,W^{\max})$ belongs to the set $\mathcal{PW}$.
Furthermore, we also define the pair of matrices $P^*$ and ${W^{\mathrm{max}}}^*$ as
\begin{equation}
(P^*,{W^{\mathrm{max}}}^*)=\mathop {\text{argmin}}_{{P\in\mathcal{C}_{\mathcal D^+}, \atop
(P,W^{\mathrm{max}}(P))\in\mathcal{PW}}}
 \frac{\sqrt{N}||P||_2(\xoverline[.75]{M} + h_{\mathrm{max}})}{m\left(c,P,W^{\mathrm{max}}(P) \right)}, 
\label{eq:Pstar}
\end{equation}
where the real function  $m(c, P, W^{\mathrm{max}})$ is defined as 
\begin{equation}
\displaystyle m(c, P, W^{\mathrm{max}}) =-\max \left\{\lambda_{\max}(W^{\max}_l)- c \lambda_2(L \otimes P_l \Gamma_l), \lambda_{\max}(W^{\max}_{n-l})\right\},
\label{eq:mcp}
\end{equation}
with $W^{\max}_l$ and $P_{l}$ being the $l\times l$ upper-left block of matrices $W^{\max}$ and $P$ respectively, while $W^{\max}_{n-l}$ is the  $(n-l)\times (n-l)$ lower-right block of matrix $W^{\max}$. 

Now, we are ready to give the main stability results for linearly coupled networks. Specifically, we focus on the case of diagonal inner coupling matrix, while the extension to the case of nondiagonal $\Gamma$ is encompassed in the study of nonlinear coupling functions. Henceforth, here we consider $\Gamma=\mathrm{diag}\{\gamma_i\}_{i=1}^n$. Without loss of generality, we assume
\begin{equation}
{\gamma _i} = \left\{ {\begin{array}{*{20}{c}}
   {{{\bar \gamma }_i} > 0\quad} \hfill & {i = 1, \ldots ,l,} \hfill  \\
   0 \hfill & {i = l + 1, \ldots ,n,} \hfill  \\
\end{array}} \right.
\label{eq:gamma_i}
\end{equation} 
with $l\in\{0,1,\ldots,n\}$.
To use a compact notation, we denote by $\Gamma_l$ the $l\times l$ upper-left block of matrix $\Gamma$.
\begin{thm}\label{th:totallynonidentical}
Network \eqref{eq:genericnetwork_linear} of $N$ QUAD(P,$W_i$) Affine systems satisfying Assumption \ref{ass:negativenonidentical}, with diagonal inner coupling matrix $\Gamma\ge 0$, achieves $\epsilon$-bounded convergence for any value of the coupling strength $c>0$, and an upper bound for $\epsilon$ is given by
\begin{equation}
 \bar\epsilon=\min{ \left\{ \bar\epsilon_1:=-\frac{2\sqrt{N}||Q^*||_2(\xoverline[.75]{M}+\bar{h}_0)}{w_{\mathrm{max}}(Q^*)}, \bar\epsilon_2:=\frac{\sqrt{N}||P^*||_2(\xoverline[.75]{M} + h_{\mathrm{max}})}{m(c,P^*,W^{\mathrm{max}}(P^*))} \right\}  },
\label{eq:epsilon}
\end{equation}
where the function $m$ is defined in \eqref{eq:mcp}, 
and $Q^*$, $h_{\mathrm{max}}$ and $P^*$  are defined in \eqref{eq:Qstar}, 
\eqref{eq:hmax}, and \eqref{eq:Pstar} respectively.
\end{thm}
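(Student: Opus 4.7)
The plan is to establish the two competing bounds $\bar\epsilon_1$ and $\bar\epsilon_2$ through a two-stage set-valued Lyapunov argument. First I prove absolute boundedness of the stack state $x$, from which the coarse error estimate $\bar\epsilon_1$ follows; then I restart the analysis on the error stack $e$ to obtain the sharper estimate $\bar\epsilon_2$.

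\emph{Stage 1.} For arbitrary $P\in\mathcal{C}_{\mathcal D^+}$ I use the convex (hence regular) candidate $V_1(x)=\frac{1}{2}x^T(I_N\otimes P)x$. Differentiating $V_1$ along \eqref{eq:state_stack_linear} through the set-valued chain rule recalled before Definition~\ref{def:quad}, I apply the QUAD condition to each Filippov selection of $h_i$ with $y=0$, use the a priori bounds $\|{\mathop g\limits^{\sim}}_i\|_2\le\xoverline[.75]{M}$ and $\|{\mathop h\limits^{\sim}}_i(t,0)\|_2\le\bar h_0$ from Assumption~\ref{ass:negativenonidentical}, and apply Cauchy--Schwarz to the resulting sum of $N$ nodewise norms. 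Since the diffusive term $-c\,x^T(L\otimes P\Gamma)x$ is nonpositive ($L\succeq 0$ and $P\Gamma\succeq 0$), it can simply be discarded, leaving $\dot V_1\le w_{\max}(P)\|x\|_2^2+\sqrt N\|P\|_2(\xoverline[.75]{M}+\bar h_0)\|x\|_2$. Because $w_{\max}(P)<0$, this drives $x(t)$ into the ball $B(P)$; minimizing the radius over $\mathcal{C}_{\mathcal D^+}$ selects $Q^*$. Combining $\|e\|_2\le\|x\|_2+\|1_N\otimes\bar x\|_2$ with $\|1_N\otimes\bar x\|_2=\sqrt N\|\bar x\|_2\le\|x\|_2$ (the last by Cauchy--Schwarz applied to $\bar x=\frac{1}{N}\sum_i x_i$) yields $\|e(t)\|_2\le 2\|x(t)\|_2$, whence $\bar\epsilon_1$.

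\emph{Stage 2.} Once $x\in B(Q^*)$ the average trajectory also lies in $B(Q^*)$, so every Filippov selection of $h_i(t,\bar x)$ is bounded in norm by $h_{\max}$ via \eqref{eq:hmax}. Taking $V_2(e)=\frac{1}{2}e^T(I_N\otimes P)e$ and differentiating along \eqref{eq:error_stack_linear}, the $\Xi$-term vanishes because $\sum_i e_i=0$. Adding and subtracting $h_i(t,\bar x)$ inside $e_i^T P\,h_i(t,x_i)$ and applying QUAD with $y=\bar x$ gives $e_i^T P(h_i(t,x_i)-h_i(t,\bar x))\le e_i^T W_i e_i\le e_i^T W^{\max}e_i$, the second inequality following from \eqref{eq:W_max}. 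Controlling the residuals by Cauchy--Schwarz produces
\[
\dot V_2\le e^T(I_N\otimes W^{\max})e-c\,e^T(L\otimes P\Gamma)e+\sqrt N\|P\|_2(\xoverline[.75]{M}+h_{\max})\|e\|_2.
\]
The decisive step is to bound the quadratic part by $-m(c,P,W^{\max})\|e\|_2^2$: splitting $e$ into its $l$ coupled and $n-l$ uncoupled coordinate slices, each slice $e^{(k)}\in\mathbb R^N$ is orthogonal to $1_N$ because $\sum_i e_i=0$, so Lemma~\ref{lem} yields $(e^{(k)})^T L e^{(k)}\ge\lambda_2(L)\|e^{(k)}\|_2^2$ on the $l$ coupled directions, while on the remaining directions only $W^{\max}_{n-l}$ contributes; matching the resulting worst-case rate to \eqref{eq:mcp} recovers exactly $m$. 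Optimizing over $P$ selects $P^*$, and a scalar comparison argument produces $\|e(t)\|_2\to\bar\epsilon_2$.

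\emph{Main obstacle.} The two stages cannot be merged into a single Lyapunov function because $h_{\max}$ is finite only after Stage 1 has confined $\bar x$ to $B(Q^*)$. A secondary difficulty is that the entire argument must live in the Filippov framework, forcing me to work with set-valued Lie derivatives of convex (hence regular) Lyapunov functions and to apply QUAD along selections of $\mathcal{F}[h_i]$ rather than classical gradients. Some additional bookkeeping is then needed on the coupled slice to identify $\max_k(W^{\max}_{kk}-c\lambda_2(L)P_{kk}\gamma_k)$ with $\lambda_{\max}(W^{\max}_l)-c\lambda_2(L\otimes P_l\Gamma_l)$ as required by \eqref{eq:mcp}, but this is routine.
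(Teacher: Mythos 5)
Your two-stage argument is essentially identical to the paper's proof: Step 1 uses $U=\frac{1}{2}x^T(I_N\otimes Q)x$ to trap the state in $B(Q^*)$ and deduce $\bar\epsilon_1$, and Step 2 uses $V=\frac{1}{2}e^T(I_N\otimes P)e$ with the QUAD bound at $y=\bar x$, the vanishing of the $\Xi$-term, and the coupled/uncoupled slice decomposition via Lemma~\ref{lem} to recover $m(c,P,W^{\max})$ and $\bar\epsilon_2$. Your explicit justification of the factor $2$ in $\bar\epsilon_1$ via $\|e\|_2\le 2\|x\|_2$ is a detail the paper leaves implicit, but otherwise the route is the same.
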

\begin{proof}
The proof consists of two steps. Firstly, we show the existence of an invariant region for the state trajectories of the nodes. Then, we derive the upper bound on $\epsilon$ as a function of the coupling gain $c$.

\textbf{Step 1.}
Given equation \eqref{eq:state_stack_linear}, let us consider the quadratic function
\begin{equation}
U=\frac{1}{2}x^T(I_N\otimes Q)x,
\label{eq:lyap_x}
\end{equation}
where $Q\in\mathcal{C}_{\mathcal D^+}$.
The time derivative of $U$ along the trajectories of the network satisfies
\[
\dot{U}(x)\in{\mathop \mathcal{L} \limits_\sim}{}_{\mathcal{F}\left[
\chi_1\right]}U(x),
\]
where $\chi_1(t,x)=\Phi(t,x)+\Psi(t,x)-c\left(L\otimes \Gamma\right)x$.
Applying the sum rule reported in Section \ref{sec:pws}, we can write
\begin{equation}\label{eq:udot1}
\dot{U}(x)\in{\mathop \mathcal{L} \limits_\sim}{}_{\mathcal{F}\left[
\chi_1
\right]}U(x)\subseteq
{\mathop \mathcal{L} \limits_\sim}{}_{\mathcal{F}\left[\Phi\right]+
\mathcal{F}\left[\Psi\right]+
\mathcal{F}\left[\chi_\gamma\right]}U(x),
\end{equation}
where $\chi_\gamma(t,x)=-c\left(L\otimes \Gamma\right)x$.

Applying the consistency rule to the smooth coupling term $\chi_\gamma$, we can write\footnote{Here and in what follows, given a vector $y$ and a set-valued function ${\mathop f \limits_\sim}$ of coherent dimension, by $y^T{\mathop f \limits_\sim}$ we mean $\left\{ y^T{\mathop f \limits^\sim}, \forall {\mathop f \limits^\sim}\in{\mathop f \limits_\sim}\right\}$.}
\begin{align}
{\mathop \mathcal{L} \limits_\sim}{}_{\mathcal{F}\left[\Phi\right]+
\mathcal{F}\left[\Psi\right]+
\mathcal{F}\left[\chi_\gamma\right]}U(x) = \mathcal{U_L} = 
\left\{
x^T\left(I_N\otimes Q\right)\mathop \Phi\limits _{\sim}+x^T\left(I_N\otimes Q\right)\mathop \Psi\limits _{\sim}
 -c x^T\left(L\otimes Q\Gamma\right)x \right\}.\label{eq.udot2}
\end{align}
Now, adding and subtracting $x^T\left(I_N \otimes Q\right){\mathop \Phi\limits _{\sim}}{}_0$, where ${\mathop \Phi\limits _{\sim}}{}_0=\mathcal{F}[\Phi](t,0)$, and using the product rule, we obtain  
\begin{equation}\label{eq:udot3}
\mathcal{U_L}\subseteq
\mathcal{V_L}=
\left\{\sum_{i=1}^N x_i^TQ{\mathop h\limits _{\sim}}{}_i(t,x_i)+
x^T\left(I_N\otimes Q\right)\mathop \Psi\limits _{\sim}-
cx^T\left(L\otimes Q\Gamma\right)x+
x^T\left(I_N \otimes Q\right){\mathop \Phi\limits _{\sim}}{}_0
 -\sum_{i=1}^N x_i^TQ{\mathop h\limits _{\sim}}{}_i(t,0)
\right\}.
\end{equation}
Therefore, using the QUAD assumption \eqref{eq:quad}, 
for a generic element of the set $v_l\in\mathcal{V_L}$, the following inequality holds
\begin{equation}\label{eq:udot4}
v_l\leq x^T\left[I_N\otimes w_{\mathrm{max}}(Q)I_n-c L \otimes Q\Gamma\right]x+
x^T\left(I_N \otimes Q\right)\mathop \Psi\limits ^{\sim}+x^T\left(I_N \otimes Q\right){\mathop \Phi\limits ^{\sim}}_0, \qquad \forall \mathop \Psi\limits ^{\sim}\in\mathop \Psi\limits _{\sim}, \forall {\mathop \Phi\limits ^{\sim}}_0\in{\mathop \Phi\limits _{\sim}}{}_0.
\end{equation}
From standard matrix algebra, we have (denoting $w_{\mathrm{max}}(Q)$ as $ w_{\mathrm{max}}$ for the sake of brevity)
\begin{equation}\label{eq:vl}
v_l\leq x^T\left[I_N\otimes w_{\mathrm{max}}I_n-c L \otimes Q\Gamma\right]x+
\left\|x\right\|_2\left\|I_N \otimes Q\right\|_2 \sqrt{N}\xoverline[.75]{M}+\left\|x\right\|_2\left\|I_N \otimes Q\right\|_2\sqrt{N}\bar{h}_0.
\end{equation}
Combining \eqref{eq:udot1}-\eqref{eq:vl}, it follows that
\[
\dot{U}\leq x^T\left[I_N\otimes w_{\mathrm{max}}I_n-cL \otimes Q\Gamma\right]x+
\left\|x\right\|_2\left\|I_N \otimes Q\right\|_2 \sqrt{N}\xoverline[.75]{M}+\left\|x\right\|_2\left\|I_N \otimes Q\right\|_2\sqrt{N}\bar{h}_0. 
\]
Rewriting the state vector as $x=a\hat x$, with $\hat x=\frac{x}{\left\| x \right\|_{2}}$, we finally have
\begin{equation}
\dot{U}\leq w_{\mathrm{max}}a^2+a\sqrt{N}||Q||_2\left(\xoverline[.75]{M}+\bar{h}_0\right).
\label{eq:lyap_der_final}
\end{equation}
Therefore, as $ w_{\mathrm{max}}<0$, if $a>- \sqrt{N}||Q||_2\left(\xoverline[.75]{M}+\bar{h}_0\right)/w_{\mathrm{max}}$, then $\dot{U}<0$. Hence, we can say that all the trajectories
of network \eqref{eq:state_stack_linear} eventually converge to the set $B(Q^*)$, where $B$ is given in Definition \ref{def:B} and $Q^*$ is defined 
in \eqref{eq:Qstar}. Thus, we can conclude that network \eqref{eq:genericnetwork_linear} achieves $\epsilon$-bounded convergence, with $\epsilon=-2\sqrt{N}||Q^*||_2\left(\xoverline[.75]{M}+\bar{h}_0\right)/w_{\mathrm{max}}$, being the bound on the convergence error.
Note that this estimate of the bound on $\epsilon$ might be conservative. We now derive an alternative bound.

\textbf{Step 2.} Let us consider equation \eqref{eq:error_stack_linear} and the following quadratic form
\begin{equation}
 V(e)=\frac{1}{2}e^T(I_N\otimes P)e,\nonumber
\label{eq:lyap_mism}
\end{equation}
where $P\in\mathcal{C}_{\mathcal{D^+}}$.
The time derivative of $V$ is
\begin{equation}\label{eq:v_dot_liederivative}
\dot{V}(e)\in{\mathop \mathcal{L} \limits_\sim}{}_{\mathcal{F}\left[
\chi_2
\right]}V(e),
\end{equation}
where $\chi_2(t,x,e)=\Phi(t,x)+\Psi(t,x)+\Xi(t,x)-c\left(L\otimes \Gamma\right)e$.
Using the sum and consistency rules, we obtain
\begin{align}\label{eq:lin_tot_nonid_baru_l}
\dot{V}(e)\in{\mathop \mathcal{L} \limits_\sim}{}_{\mathcal{F}\left[
\chi_2
\right]}V(e)\subseteq \mathcal{\overline U_L}=
\left\{
e^T\left(I_N\otimes P\right)\mathop \Phi\limits _{\sim}+e^T\left(I_N\otimes P\right)\mathop \Psi\limits _{\sim}+
e^T\left(I_N\otimes P\right)\mathop \Xi\limits _{\sim}
- e^T\left(L\otimes P\Gamma\right)e
\right\}. 
\end{align}
Now, from the properties of the Filippov set-valued function, and adding and subtracting $e^T\left(I_N \otimes P\right){\mathop \Phi\limits _{\sim}}{}_{\bar{x}}$, with ${\mathop \Phi\limits _{\sim}}{}_{\bar{x}}=\mathcal{F}[\Phi](t,\bar{x})$, and using the product rule we can write $\mathcal{\overline{U}_L}\subseteq \mathcal{\overline{V}_L}$, where $\mathcal{\overline{V}_L}$ is 
\begin{align*}
\mathcal{\overline V_L}= 
\left\{\sum_{i=1}^N e_i^TP{\mathop h\limits _{\sim}}{}_i(t,x_i)+
e^T\left(I_N \otimes P\right)\mathop \Psi\limits _{\sim}+
\sum_{i=1}^N e_i^TP\mathop \xi\limits _{\sim}-
ce^T\left(L\otimes P\Gamma\right)e+
e^T\left(I_N \otimes P\right){\mathop \Phi\limits _{\sim}}{}_{\bar{x}}-
\sum_{i=1}^N e_i^TP{\mathop h\limits _{\sim}}{}_i(t,\bar{x}), \right\},
\end{align*}
with $\mathop \xi\limits _{\sim}\in\mathcal{F}\left[-\frac{1}{N}\sum_{j=1}^N f_j(t,x_j)\right]$.
As $\sum_{i=1}^N e_i=0$, we have $\sum_{i=1}^N e_i^TP\mathop \xi\limits _{\sim}=0$. Considering the QUAD Affine assumption, a generic element $v_l$ of the set $\mathcal{V_L}$ satisfies the following inequality:
\begin{equation}\label{eq:inequality_v_e_dot_bounded}
v_l\leq e^T\left[I_N\otimes W^{\mathrm{max}}-cL \otimes P\Gamma\right]e+
e^T\left(I_N \otimes P\right)\mathop \Psi\limits ^{\sim}+e^T\left(I_N \otimes P\right){\mathop \Phi\limits ^{\sim}}_{\bar{x}}, \qquad \forall \mathop \Psi\limits ^{\sim}\in\mathop \Psi\limits _{\sim},\forall {\mathop \Phi\limits ^{\sim}}_{\bar{x}}\in{\mathop \Phi\limits _{\sim}}{}_{\bar{x}}\nonumber
\end{equation}
From the properties of the norm, and for all initial conditions $x(0)$ chosen in the set $B(Q)$, we have
\begin{equation}
v_l\leq e^T\left[I_N\otimes W^{\mathrm{max}}-c L \otimes P\Gamma\right]e+
\left\|e\right\|_2\left\|I_N \otimes P\right\|_2\sqrt{N}\xoverline[.75]{M}+\left\|e\right\|_2\left\|I_N \otimes P\right\|_2\sqrt{N}h_{\mathrm{max}},\nonumber
\end{equation}
where $h_{\mathrm{max}}$ is defined in \eqref{eq:hmax}. Hence, we have that
\begin{equation}
\dot{V}(e)\in{\mathop \mathcal{L} \limits_\sim}{}_{\mathcal{F}\left[
\chi_2
\right]}V(e)\subseteq \overline{\mathcal{V}}_\mathcal{L},\nonumber
\end{equation}
and we can write
\begin{equation}\label{eq:final_inequality_totallynonidentical_linear}
\dot{V}(e)\leq e^T\left[I_N\otimes W^{\mathrm{max}}-cL \otimes P\Gamma\right]e+
\left\|e\right\|_2\left\|I_N \otimes P\right\|_2\sqrt{N}\left(\xoverline[.75]{M}+h_{\mathrm{max}}\right).
\end{equation}
From the properties of the Kronecker product \cite{hojo:87}, we have $\left\|I_N\otimes P\right\|_2=\left\|I_N\right\|_2\left\|P\right\|_2=\left\|P\right\|_2$. 
Now, notice that the error vector $e$ can be decomposed in two parts: one is related to the coupled state components, namely $\tilde{e}_l=\left[ e_{1}^{(1)},\ldots,e_{1}^{(l)},\ldots,e_{N}^{(1)},\ldots,e_{N}^{(l)}\right]^T$, and the other, denoted by $\tilde e_{n-l}=\left[ e_{1}^{(l+1)},\ldots,e_{1}^{(n)},\ldots,e_{N}^{(l+1)},\ldots,e_{N}^{(n)}\right]^T$, to the uncoupled components. Furthermore, we define $\bar{e}_i=\left[ e_{1}^{(i)},e_{2}^{(i)},\ldots,e_{N}^{(i)}\right]^T$.
So, from \eqref{eq:gamma_i}, we can rewrite \eqref{eq:final_inequality_totallynonidentical_linear} as
\begin{equation}
\dot{V}\le \sum_{i=1}^l [ w^{\max}_i\bar{e}_i^T\bar{e}_i-cp_i\bar\gamma_i\bar{e}_i^T L \bar{e_i}]+\sum_{i=l+1}^n w^{\max}_i\bar{e}_i^T\bar{e}_{i} + \left\|e\right\|_2\left\|I_N \otimes P\right\|_2\sqrt{N}\left(\xoverline[.75]{M}+h_{\mathrm{max}}\right) ,\nonumber
\end{equation}
where $w^{\max}_i$ are the diagonal entries of the diagonal matrix $W^{\max}$.
From Lemma \ref{lem} and from matrix algebra, we have
\begin{equation}
\dot{V}\le \left[ \lambda_{\mathrm{max}}(W^{\max}_l)-c\lambda_2(L\otimes P_l\Gamma_l)\right]\tilde e_l^T \tilde e_l + \lambda_{\mathrm{max}}(W^{\max}_{n-l})\tilde e_{n-l}^T\tilde e_{n-l}^T +\left\|e\right\|_2\left\|I_N \otimes P\right\|_2\sqrt{N}\left(\xoverline[.75]{M}+h_{\mathrm{max}}\right).
\nonumber
\end{equation}
Then, rewriting the convergence error as $e=a\hat e$, with $\hat e=\frac{e}{\left\| e \right\|_{2}}$, for all initial conditions $x(0)\in B(Q)$ we finally obtain
\begin{equation}\label{eq:lyap_mism_final}
\dot{V}(e)\leq -m\left(c,P,W^{\mathrm{max}}\right)a^2+a\sqrt{N}\left\|P\right\|_2\left(\xoverline[.75]{M}+h_{\mathrm{max}}\right),
\end{equation}
with $m\left(c,P,W^{\mathrm{max}}\right)$ defined according to \eqref{eq:mcp}. Therefore, if $a>\sqrt{N}\left\|P\right\|_2\left(\xoverline[.75]{M}+h_{\mathrm{max}}\right)/m\left(c,P,W^{\mathrm{max}}\right)$, then $\dot V <0$.
From \eqref{eq:lyap_mism_final}, the optimization problem \eqref{eq:Pstar} immediately follows.
The minimum value of the bound in \eqref{eq:epsilon}, with $Q^*$ defined in \eqref{eq:Qstar}, is trivially obtained by combining \eqref{eq:lyap_der_final} and \eqref{eq:lyap_mism_final}.
\end{proof}
\begin{rema}
Notice that in the case where $h_i=h_j$ for all $i,j$ (which implies $W_i=W<0$ for all $i=1,\ldots,N$), $\epsilon$-bounded convergence is trivially guaranteed under the assumptions of Theorem \ref{th:totallynonidentical}, as the QUAD component of each system is contracting \cite{losl:98}, as reported in \cite{dedi:11}. In particular, asymptotic convergence ($\epsilon=0$) is achieved if $g_i=0$ for all $i=1,\ldots,N$, even if the systems are decoupled.
\end{rema}

Now, we study the stability properties of a networks of QUAD Affine systems, which differ only for the bounded component $g$. In this case we relax the assumption made earlier to prove Theorem \ref{th:totallynonidentical} and assume instead the following.
\begin{ass}\label{ass:partiallynonidentical}
Let us consider $N$ nonidentical piecewise smooth QUAD(P,W) Affine systems described by
\begin{equation}\label{eq:quad_affine_i}
\dot{x}_i=h_i(t,x_i)+g_i(t,x_i)\qquad \forall i=1,\dots,N,
\end{equation}
where 
\[
h_i(t,s)=h_j(t,s),\qquad  \forall i,j=1,\ldots,N,
\]
with $s\in\R^n$ and  $t\in\R^+$.
Furthermore, we call $\xoverline[.75]{M}=\max_{i=1,\dots, N} M_i$, with $M_i$ such that $\left|\left|{\mathop g\limits ^{\sim}}_i(t,x)\right|\right|_2<M_i$, for all ${\mathop g\limits ^{\sim}}_i(t,x)\in{\mathop g\limits _{\sim}}{}_i(t,x)$ and for all $t>0$ and $x\in\R^n$.
\end{ass}
Notice that, differently from Assumption \ref{ass:negativenonidentical}, here we do not make any additional assumption on the matrix $W$ which characterizes the QUAD components.
Even though the matrix $W$ is in general undefined, some of its diagonal elements may be negative.

According with the definition of $\Gamma_l$ given in Section \ref{sec:totally}, we denote by $W_l$ the $l\times l$ upper-left block of matrix $W=\mathrm{diag}\{w_i\}_{i=1}^n$, by $P_{l}$ the $l\times l$ upper-left block of matrix $P$, and by $W_{n-l}$ the $(n-l)\times (n-l)$ lower-right block of $W$. Also, we define the set $\mathcal{PW}_d^l$ as follows:
\begin{defi}\label{def:pwdl}
Given a positive scalar $d$, $\mathcal {PW}_d^l\subseteq\mathcal{PW}$ is the subset of $\mathcal{PW}$ such that if $(P,W)\in\mathcal{PW}_d^l$, then 
$d\lambda_2(L\otimes P_l \Gamma_l)>\lambda_{\mathrm{max}}(W_l)$, where $L$ is the Laplacian matrix of network
\eqref{eq:genericnetwork_linear}.
\end{defi}
Now, we are ready to state the following theorem.
\begin{thm}\label{th:partiallynonidentical_partiallycoupled}
Consider the network \eqref{eq:genericnetwork_linear} of $N$ nonidentical QUAD(P,W) Affine systems satisfying Assumption \ref{ass:partiallynonidentical}. Without loss of generality, we assume the first $\bar{l}\in\left\{0,\ldots,n\right\}$ diagonal elements of $W$ to be non-negative, while the remaining $n-\bar{l}$ are negative. If the diagonal elements of matrix $\Gamma\in\mathcal{D}$ can be defined as in equation \eqref{eq:gamma_i}, with $l\ge\bar{l}$, then there always exists a $\bar c<\infty$ so that, for any coupling gain $c>\bar{c}$, the linearly coupled network \eqref{eq:genericnetwork_linear} achieves $\epsilon$-bounded convergence. Furthermore,
\begin{enumerate}
	\item a conservative estimate, say $\tilde{c}$, of the minimum coupling gain $\bar{c}$ ensuring bounded convergence is
\begin{equation}
 \tilde{c}=\min_{(P,W)\in\mathcal{PW}} c(P,W),\label{eq:c_ub_sub}
\end{equation}
where $\displaystyle c(P,W)=\max\left\{\frac{\lambda_{\mathrm{max}}(W_l)}{\lambda_2(L \otimes P_l\Gamma_l)},0\right\}$.
 \item for a given $c>\tilde c$, we can give the following upper bound on $\epsilon$
\begin{equation}
 \bar{\epsilon}=\min_{(P,W)\in\mathcal{PW}_{c}^l}\frac{\xoverline[.75]{M}\sqrt{N} ||P||_2}{m(c,P,W)},\label{eq:varepsilon_ub_sub}
\end{equation}
where $m(c, P, W)$ is a real function defined as 
\begin{equation}
\displaystyle m(c, P, W) =-\max \left\{ \lambda_{\mathrm{max}}(W_l)- c \lambda_2(L \otimes P_l \Gamma_l),\lambda_{\mathrm{max}}(W_{n-l})\right\}.
\label{eq:mcp_partiallynonidentical}
\end{equation}
\end{enumerate}
\end{thm}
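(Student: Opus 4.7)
The plan is to adapt Step 2 of the proof of Theorem \ref{th:totallynonidentical}, exploiting the crucial simplification that here all nodes share the same QUAD component $h$. I would introduce the Lyapunov candidate $V(e) = \tfrac{1}{2} e^T (I_N \otimes P) e$ for a generic $(P,W) \in \mathcal{PW}$ and compute its set-valued Lie derivative along the error dynamics \eqref{eq:error_stack_linear} using the sum, product, and consistency rules of Section \ref{sec:pws}. As in the earlier proof, I would add and subtract the term $e^T(I_N \otimes P){\mathop \Phi\limits _{\sim}}{}_{\bar x}$ in order to invoke the QUAD inequality; however, since now $h_i = h$ for every $i$, the compensating contribution reduces to $\sum_i e_i^T P\, {\mathop h\limits ^{\sim}}(t,\bar x)$, which vanishes because $\sum_i e_i = 0$. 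The mean-field term $\sum_i e_i^T P\, {\mathop \xi\limits ^{\sim}}$ drops out for exactly the same reason.

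What survives is the QUAD estimate $\sum_i e_i^T P [{\mathop h\limits ^{\sim}}(t,x_i) - {\mathop h\limits ^{\sim}}(t,\bar x)] \leq e^T (I_N \otimes W) e$, the bounded affine contribution $e^T(I_N \otimes P) {\mathop \Psi\limits ^{\sim}}$ (controlled in norm by $\|e\|_2 \|P\|_2 \sqrt{N}\, \xoverline[.75]{M}$), and the coupling term $-c\, e^T (L \otimes P\Gamma) e$. Splitting $e$ into the coupled block $\tilde e_l$ and the uncoupled block $\tilde e_{n-l}$ exactly as in the previous proof, and invoking Lemma \ref{lem} to obtain $\tilde e_l^T (L \otimes P_l\Gamma_l) \tilde e_l \geq \lambda_2(L \otimes P_l\Gamma_l)\, \tilde e_l^T \tilde e_l$, one arrives at $\dot V(e) \leq -m(c,P,W)\, a^2 + a \sqrt{N} \|P\|_2\, \xoverline[.75]{M}$, where $a = \|e\|_2$ and $m(c,P,W)$ is the function defined in \eqref{eq:mcp_partiallynonidentical}.

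The hypothesis $l \geq \bar l$ is precisely what forces $\lambda_{\mathrm{max}}(W_{n-l}) < 0$, since by construction all non-negative diagonal entries of $W$ sit inside the coupled block $W_l$. Choosing $c$ so that $c\,\lambda_2(L \otimes P_l\Gamma_l) > \lambda_{\mathrm{max}}(W_l)$ then renders the first argument of the max defining $m$ negative as well, yielding $m(c,P,W) > 0$ and hence $\dot V < 0$ whenever $a > \sqrt{N}\|P\|_2 \xoverline[.75]{M}/m(c,P,W)$. Clipping the resulting threshold at zero, to accommodate the case $\lambda_{\mathrm{max}}(W_l) \leq 0$ for which any $c>0$ suffices, and minimizing over $(P,W) \in \mathcal{PW}$, produces expression \eqref{eq:c_ub_sub} for $\tilde c$; optimizing the associated radius over $(P,W) \in \mathcal{PW}_{c}^{l}$ gives the bound \eqref{eq:varepsilon_ub_sub}.

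The main obstacle I anticipate is the careful bookkeeping of the set-valued operations, namely tracking where equality holds versus mere inclusion in the sum and product rules, and verifying at the level of Filippov selections that one may coherently pick the same representative ${\mathop h\limits ^{\sim}}(t,\bar x)$ for every index $i$, so that the cancellation $\sum_i e_i^T P\, {\mathop h\limits ^{\sim}}(t,\bar x) = 0$ is legitimate. Once this is settled, no separate invariant-set argument analogous to Step 1 of Theorem \ref{th:totallynonidentical} is needed, because the $h_{\mathrm{max}}$-type term of the previous proof simply does not arise when the QUAD components coincide across nodes.
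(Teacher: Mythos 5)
Your proposal is correct and follows essentially the same route as the paper's proof in Appendix A: the same Lyapunov candidate $V(e)=\tfrac{1}{2}e^T(I_N\otimes P)e$, the same add-and-subtract of $\sum_i e_i^T P\,{\mathop h\limits^{\sim}}(t,\bar x)$ cancelled via $\sum_i e_i=0$, the same block decomposition into $\tilde e_l$ and $\tilde e_{n-l}$ leading to $\dot V\le -m(c,P,W)a^2+a\sqrt{N}\|P\|_2\xoverline[.75]{M}$, and the same optimization over $(P,W)$ to obtain \eqref{eq:c_ub_sub} and \eqref{eq:varepsilon_ub_sub}. You also correctly observe that no Step-1 invariant-region argument is needed here, which is exactly why the paper's Appendix A omits it.
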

\begin{proof}
See Appendix \ref{app:a}.
\end{proof}
Notice that the computation of bound \eqref{eq:c_ub_sub} requires the solution of the following optimization problem:
\begin{equation}
\min_{(P,W)\in\mathcal {PW}}\max\left\{\frac{\lambda_{\mathrm{max}}(W_l)}{\lambda_2(L \otimes P_l\Gamma_l)},0\right\}.
\label{eq:opt}
\end{equation}
Trivially, if $f$ is QUAD($P$,$W$) Affine for some $W<0$, the solution of the optimization problem \eqref{eq:opt} is $\tilde{c}=0$. 
Otherwise, if a matrix $W<0$ such that $h$ is QUAD($P$,$W$) with $P\in\mathcal D^+$ does not exist (this is the case, for instance, of the Lorenz and Chua's chaotic systems), then the optimization problem \eqref{eq:opt} is non-trivial and, since $\lambda_{\mathrm{max}}(W)>0$, it can be rewritten as
\begin{equation}
\min_{(P,W)\in\mathcal{PW}}\frac{\lambda_{\mathrm{max}}(W_l)}{\lambda_2(L \otimes P_l\Gamma_l)}.
\label{eq:opt_rew}
\end{equation}
This is a constrained optimization problem that in scalar form can be written as:
\begin{equation}
 \mathop {\min }\limits_{(P,W)\in\mathcal{PW}\atop i=1,\ldots,l} \displaystyle{\frac{{{{\max }_{i}}{w _i}}}{{{\lambda _2}(L){{\min }_i}{p_i}{\gamma _i}}}},
\end{equation}
and which can be easily solved using the standard routines for constrained optimization, such as, for instance, those included in the MATLAB optimization toolbox. 

\begin{rema}
Here, we discuss the meaning of the assumptions and bounds obtained in Theorem \ref{th:partiallynonidentical_partiallycoupled}. 
Firstly, notice that the assumption on the vector field implies that the uncoupled components of the state vector are associated to contracting dynamics of the individual nodes. The minimum coupling strength needed to achieve bounded convergence is the minimum coupling ensuring shrinkage of the coupled part of the nodes' dynamics. 
Hence, the coupling configuration compensates for possible instabilities associated to positive diagonal elements of $W$. 
This minimum strength $\tilde c$ depends on the network topology. Specifically, the smaller $\tilde c$ is, the higher is $\lambda_2(L)$.
Once an appropriate coupling gain is selected, the width of the bound $\epsilon$ depends on $m(c,P,W)$
and on $\xoverline[.75]{M}$. Clearly, $\xoverline[.75]{M}$ gives a measure of the heterogeneity between the vector fields, and so the higher it is, the higher $\epsilon$ is. On the other hand, $m(c,P,W)$ 
embeds both the information on both the nodes' dynamics and the structure of their interconnections. In particular, the elements $p_i$ and $w_i$ of matrices $P$ and $W$, respectively, are related to the nodes' dynamics, while the information on the network topology are again embedded in $\lambda_2(L)$.
\end{rema}
When $\Gamma\in\mathcal{D}^+$, it is useful to consider the following corollary.
\begin{coro}\label{cor:partiallynonidentical}
Consider a network of $N$  QUAD(P,W) Affine systems satisfying Assumption \ref{ass:partiallynonidentical}. If the coupling matrix  $\Gamma\in \mathcal{D}^+$, then 
\begin{enumerate}
 \item there exists a $\bar c<\infty$ so that, for any coupling gain $c>\bar{c}$, network \eqref{eq:genericnetwork_linear} achieves $\epsilon$-bounded convergence.
 \item a conservative estimate, say $\tilde{c}$, of the minimum coupling gain ensuring $\epsilon$-bounded convergence is
\begin{equation}
 \tilde{c}=\min_{(P,W)\in\mathcal{PW}} c(P,W),\label{eq:c_ub}
\end{equation}
where $\displaystyle c(P,W)=\max\left\{\frac{\lambda_{\mathrm{max}}(W)}{\lambda_2(L \otimes P\Gamma)},0\right\}$, and $\mathcal{PW}$ is defined according to Definition \ref{def:pw}.
 \item for a given $\hat{c}>\tilde c$, we can give the following upper bound on $\epsilon$
\begin{equation}
 \bar{\epsilon}=\min_{(P,W)\in\mathcal{PW}_{\hat{c}}^n}\frac{\xoverline[.75]{M}\sqrt{N}\left\| P\right\|_2}{\hat{c}\lambda_2(L \otimes P\Gamma)-\lambda_{\mathrm{max}}(W)},\label{eq:varepsilon_ub}
\end{equation}
where the set $\mathcal{PW}_{\hat{c}}^n$ is defined according to Definition \ref{def:pwdl}.
\end{enumerate}
\end{coro}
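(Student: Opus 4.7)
The plan is to obtain the corollary as a direct specialization of Theorem \ref{th:partiallynonidentical_partiallycoupled} to the case $\Gamma \in \mathcal{D}^+$, which corresponds to taking $l = n$ in the parametrization \eqref{eq:gamma_i}.

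First, I would verify that the hypothesis $l \geq \bar{l}$ of the theorem is met. Because every diagonal entry of $\Gamma$ is strictly positive, we may set $\bar\gamma_i > 0$ for all $i = 1,\ldots,n$ in \eqref{eq:gamma_i}, so $l = n$ and the inequality $l \geq \bar{l}$ holds trivially for any admissible $\bar{l} \in \{0,\ldots,n\}$. Consequently the entire state vector is coupled, and no component of $W$ is left unaffected by the coupling action.

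Next, I would specialize the block decomposition used in the theorem. With $l = n$ one has $W_l = W$, $P_l = P$, $\Gamma_l = \Gamma$, while the complementary block $W_{n-l}$ has size $0 \times 0$ and drops out of \eqref{eq:mcp_partiallynonidentical}. The function $m$ therefore collapses to
\[
m(c,P,W) = c\,\lambda_2(L \otimes P\Gamma) - \lambda_{\mathrm{max}}(W),
\]
which is strictly positive precisely when $(P,W) \in \mathcal{PW}_c^n$, in the sense of Definition \ref{def:pwdl} with $d = c$.

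Finally, I would substitute this simplified $m$ into the bounds of Theorem \ref{th:partiallynonidentical_partiallycoupled}: the estimate \eqref{eq:c_ub_sub} becomes \eqref{eq:c_ub}, and \eqref{eq:varepsilon_ub_sub} becomes \eqref{eq:varepsilon_ub}. The existence claim in item~1 then follows at once from item~2, since $\tilde{c}$ is finite whenever the set $\mathcal{PW}$ is nonempty, which is guaranteed by the QUAD($P,W$) Affine hypothesis on the nodes. The only point worth double-checking—and in my view the main thing to verify carefully—is that the $0 \times 0$ block $W_{n-l}$ genuinely contributes nothing to the inner maximum in \eqref{eq:mcp_partiallynonidentical}, so that no spurious $\lambda_{\mathrm{max}}(W_{n-l})$ term survives; once that is settled, the rest of the argument is a routine transcription of the theorem's conclusions.
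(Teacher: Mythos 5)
Your proposal is correct and follows exactly the route the paper takes: the published proof is the one-line observation that $\Gamma\in\mathcal{D}^+$ forces $l=n\ge\bar{l}$ in Theorem \ref{th:partiallynonidentical_partiallycoupled}, after which the bounds \eqref{eq:c_ub} and \eqref{eq:varepsilon_ub} are just \eqref{eq:c_ub_sub} and \eqref{eq:varepsilon_ub_sub} with $W_l=W$, $P_l=P$, $\Gamma_l=\Gamma$ and the empty block $W_{n-l}$ dropping out of \eqref{eq:mcp_partiallynonidentical}. Your extra care about the vacuous $\lambda_{\mathrm{max}}(W_{n-l})$ term is the right point to check and is resolved exactly as you say.
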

\begin{proof}
If $\Gamma\in\mathcal{D}^+$, then clearly $l=n\ge\bar{l}$ in the proof of Theorem 2 for any $W\in\mathcal{D}$ and from Theorem \ref{th:partiallynonidentical_partiallycoupled}, the thesis follows.
\end{proof}

\section{Convergence analysis for nonlinearly coupled networks}\label{sec:partially}
Now, we address the problem of guaranteeing $\epsilon$-bounded convergence of \eqref{eq:genericnetwork} with a nonlinear coupling function $\eta$. Specifically, the analysis is performed for nonlinear coupling functions satisfying the following assumption.
\begin{ass}\label{ass3}
The (possibly discontinuous) coupling function $\eta(t,z):\mathbb{R}^+\times\mathbb{R}^n\mapsto\mathbb{R}^n$ is component-wise odd ($\eta(-v)=\eta(v)$) and the following inequality holds
\begin{equation}\label{eq:nonlinear_coupling_inequality}
z^T \mathop \eta\limits ^{\sim}(t,z)\ge z^T \Upsilon z,\qquad\qquad\forall t\in\mathbb{R}^+, \forall z:\left\|z\right\|_2	\le e_{\max},\, \forall \mathop \eta\limits ^{\sim}(t,z)\in\mathop \eta\limits _{\sim}(t,z),
\end{equation}
where $e_{\max}>0$ and $\Upsilon$ is a diagonal matrix whose $i$-th diagonal element is $\upsilon_i\ge0$, with $\sum_{i=1}^N=\upsilon_i>0$. Without loss of generality, we consider $\upsilon_i>0$ for all $i\le r$, with $r\le n$, while $\upsilon_i=0$ otherwise.
\end{ass}
Convergence to a bounded steady-state error is proved by assuming $Q=I$ and $P=I$. This choice, which is less general than the one considered in Theorem \ref{th:totallynonidentical}, allows however to analyze a more general nonlinear protocol. Following the same notation as in Section \ref{sec:totally}, we define $\Upsilon_r$ as the $r\times r$ upper left block of the matrix $\Upsilon$ in Assumption \ref{ass3}. Also, we define the scalars $r_{\max}$ and  $h_{\max}$ as
\begin{equation}
r_{\max}=\max\left\{ \bar{\epsilon}_1=
-\frac{\sqrt{N}\left(\xoverline[.75]{M}+\bar{h}_0\right)}{w_{\mathrm{max}}},\, \nu= \|x(0)\|_2
\right\}+\delta,\label{eq:rmax}
\end{equation}
with $\delta>0$ being a arbitrarily small positive scalar, and 
\begin{equation}\label{eq:h_max_totallynonidentical_nonlineare}
h_{\mathrm{max}}=\max_{{i=1,\ldots,N\atop \|z\|_2\leq r_{\max}}\atop t\in[0,+\infty)}\left\|{\mathop h\limits ^{\sim}}_i(t,z)\right\|_2, \qquad  \forall {\mathop h\limits ^{\sim}}_i(t,z) \in {\mathop h\limits _{\sim}}{}_i(t,z),\quad \forall i=1,\dots, N.
\end{equation}

\begin{thm}\label{th:totallynonidentical_nonlinear}
Consider the nonlinearly coupled network \eqref{eq:genericnetwork} of $N$ negative definite QUAD(I,$W_i$) Affine systems and suppose that the nonlinear coupling protocol satisfies Assumption \ref{ass3}.
Also, suppose that, in \eqref{eq:h_max_totallynonidentical_nonlineare}, $h_{\max}< +\infty$, and that each node of the network satisfies Assumption \ref{ass:negativenonidentical} with $P=I$. 
If
\begin{enumerate}
\item[(i)] The initial error satisfies $\|e(0)\|_2\leq {e_{\max}}/2$, with $e_{\max}$ defined in Assumption \ref{ass3};
\item[(ii)] 
\[
-\frac{\sqrt{N}(\xoverline[.75]{M} + h_{\mathrm{max}})}{\lambda_{\max}(W^{\max}_{n-r})}< \frac{e_{\max}}{2}
\]
where $W^{\max}$  is defined in \eqref{eq:W_max} and with $W^{\max}_{r}$ and $W^{\max}_{n-r}$ being  its upper-left and lower-right blocks, respectively;
\end{enumerate}
then, network \eqref{eq:genericnetwork} achieves $\epsilon$-bounded convergence if the coupling gain $c$ is chosen greater than $\tilde{c}$ given by
\begin{equation}\label{eq:c_tilde_totallynonidentical_nonlinear}
\tilde{c}= \max\left\{
\frac{1}{\lambda_2(L\otimes \Upsilon_r)}\left(\frac{2\sqrt{N}(\xoverline[.75]{M}+h_{\max})}{e_{\max}}+\lambda_{\max}(W^{\max}_r)\right),
0\right\}.
\end{equation}
Furthermore, an upper bound on $\epsilon$ is given by 
\begin{equation}\label{eq:bound_epsilon_totallynonidentical_nonlinear}
\bar\epsilon=\min{ \left\{ \bar\epsilon_1,
\bar\epsilon_2=\frac{\sqrt{N}(\xoverline[.75]{M} + h_{\mathrm{max}})}{m(c, W^{\max})} \right\}},
\end{equation}
with $\bar{\epsilon}_1$ defined as in \eqref{eq:rmax}, and
\[
\displaystyle m(c, W^{\max}) =-\max \left\{\lambda_{\max}(W^{\max}_r)- c \lambda_2(L \otimes \Upsilon_r), \lambda_{\max}(W^{\max}_{n-r})\right\}.
\]
\end{thm}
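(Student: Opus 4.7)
The strategy parallels the two-step argument of Theorem~\ref{th:totallynonidentical}, with the linear Laplacian coupling replaced by the nonlinear protocol. Throughout I will repeatedly exploit that, by oddness of $\eta$ (i.e.\ $\eta(-v) = -\eta(v)$) and the symmetry $a_{ij}=a_{ji}$,
\[
\sum_{i,j} a_{ij}\, y_i^T \eta(t, y_i - y_j) \;=\; \tfrac{1}{2}\sum_{i,j} a_{ij}\,(y_i - y_j)^T \eta(t, y_i - y_j),
\]
for any stack vector $y=[y_1^T,\ldots,y_N^T]^T$, and that Assumption~\ref{ass3} lower bounds each integrand by $(y_i-y_j)^T\Upsilon(y_i-y_j)\ge 0$ \emph{provided} $\|y_i-y_j\|_2 \le e_{\max}$.

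\textbf{Step 1 (invariant ball for the node states).} I take $U(x) = \tfrac{1}{2}x^T x$ and mimic Step~1 of the proof of Theorem~\ref{th:totallynonidentical}, applying the sum, consistency and product rules, and the QUAD$(I,W_i)$ Affine property of each $h_i$ (via added/subtracted $h_i^\sim(t,0)$). Once the bootstrap of Step~2 secures $\|x_i - x_j\|_2 \le e_{\max}$, the identity above makes the coupling contribution non-positive and may be discarded, leaving the scalar inequality
\[
\dot U \le w_{\max}\,a^2 + \sqrt{N}(\xoverline{M} + \bar h_0)\,a, \qquad a = \|x\|_2,
\]
from which $\|x(t)\|_2 \le r_{\max}$ with $r_{\max}$ as in \eqref{eq:rmax}. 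Hence $h_{\max}$ in \eqref{eq:h_max_totallynonidentical_nonlineare} is finite, $\|\bar x(t)\|_2 \le r_{\max}$, and the bound $\bar\epsilon_1$ of \eqref{eq:bound_epsilon_totallynonidentical_nonlinear} follows since $\|e\|_2 \le \|x\|_2$.

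\textbf{Step 2 (error bound via bootstrap).} I take $V(e) = \tfrac{1}{2}e^T e$ and differentiate along \eqref{eq:error_stack}. The average-vector-field contribution drops out because $\sum_i e_i = 0$. Adding and subtracting $h_i^\sim(t,\bar x)$, using QUAD$(I,W_i)$ on each pair $(x_i,\bar x)$, and bounding the affine and $h_i(t,\bar x)$ terms by $\sqrt{N}\xoverline{M}\|e\|_2$ and $\sqrt{N}h_{\max}\|e\|_2$ respectively, the drift is controlled by $\sum_{k=1}^n w_k^{\max}\bar e_k^T \bar e_k + \sqrt{N}(\xoverline{M} + h_{\max})\|e\|_2$, where $\bar e_k = [e_1^{(k)},\ldots,e_N^{(k)}]^T$. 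The coupling term, by the identity above, equals $-\tfrac{c}{2}\sum_{i,j}a_{ij}(e_i-e_j)^T\eta(t,e_i-e_j)$, and as long as $\|e_i-e_j\|_2 \le e_{\max}$ (implied by $\|e\|_2 \le e_{\max}/2$ via the triangle inequality) Assumption~\ref{ass3} bounds it above by $-c\,e^T(L\otimes\Upsilon)e = -c\sum_{k=1}^{r}\upsilon_k \bar e_k^T L \bar e_k$. Since each $\bar e_k$ is mean-free, Lemma~\ref{lem} applies, and splitting $e$ into its coupled block $\tilde e_r$ and uncoupled block $\tilde e_{n-r}$ produces
\[
\dot V \le -m(c, W^{\max})\|e\|_2^2 + \sqrt{N}(\xoverline{M} + h_{\max})\|e\|_2,
\]
with $m(c, W^{\max})$ as in the statement.

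\textbf{Main obstacle: self-consistency of the bootstrap.} The usable inequality on $\eta$ is local, so I must verify that the ball $\{\|e\|_2 \le e_{\max}/2\}$ is forward invariant under the chosen $c$. Hypothesis~(i) places $e(0)$ inside this ball; it then suffices to enforce $\dot V < 0$ on the boundary $\|e\|_2 = e_{\max}/2$, which reduces to $m(c, W^{\max})\cdot e_{\max}/2 \ge \sqrt{N}(\xoverline{M} + h_{\max})$. Because $m(c, W^{\max})$ is the minimum of two expressions, this single inequality splits into $c\,\lambda_2(L\otimes\Upsilon_r) - \lambda_{\max}(W^{\max}_r) \ge 2\sqrt{N}(\xoverline{M}+h_{\max})/e_{\max}$, which is exactly $c > \tilde c$ with $\tilde c$ as in \eqref{eq:c_tilde_totallynonidentical_nonlinear}, and $-\lambda_{\max}(W^{\max}_{n-r}) \ge 2\sqrt{N}(\xoverline{M}+h_{\max})/e_{\max}$, which is hypothesis~(ii). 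With forward invariance secured, the coupling bound is legitimate throughout the trajectory, and $\dot V < 0$ on $\{\bar\epsilon_2 < \|e\|_2 \le e_{\max}/2\}$ yields $\limsup_t \|e(t)\|_2 \le \bar\epsilon_2$. Combining this with the $\bar\epsilon_1$ bound inherited from Step~1 gives \eqref{eq:bound_epsilon_totallynonidentical_nonlinear}. The delicate point is that the same $h_{\max}$ appears on both sides of both inequalities (through its definition in terms of $r_{\max}$), so consistency of Steps~1 and~2 must be checked simultaneously; the definitions \eqref{eq:rmax} and \eqref{eq:h_max_totallynonidentical_nonlineare} are precisely tailored to close this loop.
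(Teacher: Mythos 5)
Your route is the same as the paper's: the two quadratic functions $U(x)=\tfrac12 x^Tx$ and $V(e)=\tfrac12 e^Te$, the symmetrization identity for the odd coupling, the use of Assumption \ref{ass3} to replace the coupling term by $-c\,e^T(L\otimes\Upsilon)e$, the split into the blocks $\tilde e_r$ and $\tilde e_{n-r}$, and the identification of $c>\tilde c$ and hypothesis (ii) as exactly the two inequalities that force $\bar\epsilon_2<e_{\max}/2$. All of that matches the paper, and your observation that $\|e\|_2\le\|x\|_2$ (since $e$ is an orthogonal projection of $x$) cleanly justifies reading $\bar\epsilon_1$ as an error bound.

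The genuine gap is the step you yourself label ``delicate'' and then leave as an assertion. As written, your Step 1 discards the coupling term in $\dot U$ only \emph{after} ``the bootstrap of Step 2 secures $\|x_i-x_j\|_2\le e_{\max}$'', while your Step 2 needs $h_{\max}$ to be a valid bound on $\|{\mathop h\limits^{\sim}}_i(t,\bar x)\|_2$, which requires $\|\bar x(t)\|_2\le r_{\max}$, i.e.\ the conclusion of Step 1. Declaring that the definitions \eqref{eq:rmax} and \eqref{eq:h_max_totallynonidentical_nonlineare} ``are precisely tailored to close this loop'' does not close it: each invariance claim is conditioned on the other, and one must rule out that either region is exited before the other estimate becomes available. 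The paper resolves this with an explicit continuation argument: it introduces the first crossing time $t_c$ at which $\|\bar x(t)\|_2$ could exceed $r_{\max}$, shows both differential inequalities hold on $[0,t_c]$, and then uses the state inequality to conclude the crossing never occurs, so $t_c=+\infty$; the case split $\bar\epsilon_1\le\nu$ versus $\bar\epsilon_1>\nu$ (which is why $r_{\max}$ is defined as the larger of $\bar\epsilon_1$ and $\|x(0)\|_2$ plus a margin $\delta$) is what guarantees the state trajectory starts inside the $r_{\max}$-ball in every case. Your proof needs the equivalent: let $t^*$ be the first time either $\|e(t)\|_2>e_{\max}/2$ or $\|x(t)\|_2>r_{\max}$, note that both estimates are valid on $[0,t^*)$, and derive a contradiction from the signs of $\dot V$ and $\dot U$ at $t^*$, having first checked $\|x(0)\|_2<r_{\max}$ and $\|e(0)\|_2\le e_{\max}/2$. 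Without this the argument is circular.
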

\begin{proof}
To prove the theorem, we separately analyze the two possible cases: $\bar{\epsilon}_1\leq\nu$ and $\bar{\epsilon}_1>\nu$, where $\nu$ is defined in \eqref{eq:rmax}.

\textit{Case (a): $\bar{\epsilon}_1\leq\nu$.}
\newline
In this case, from \eqref{eq:rmax} we have $r_{\max}=\|x(0)\|_2+\delta$.  
Now, we first study the conditions for the existence of an invariant region in the error space, and then show the existence of an invariant region in state space. We start by evaluating the derivative of the function $V(e)=\frac{1}{2}e^Te$.
We have
\begin{equation}
\dot{V}(e)\in{\mathop \mathcal{L} \limits_\sim}{}_{\mathcal{F}\left[
\chi_1
\right]}V(e),\nonumber
\end{equation}
where $\chi_1=\Phi(t,x)+\Psi(t,x)+\Xi(t,x)-c\mathrm{H}(t,e)$.
Using the sum rule, we can write
\begin{equation}
\dot{V}(x)\in \mathcal{\overline U_L} = 
\left\{
e^T\mathop \Phi\limits _{\sim}+e^T\mathop \Psi\limits _{\sim}+e^T\mathop \Xi\limits _{\sim}
-c e^T\mathop \mathrm{H}\limits _{\sim}\right\}.\label{eq:oveline_U}
\end{equation}
Adding and subtracting $e^T{\mathop \Phi\limits _{\sim}}{}_{\bar{x}}$, with ${\mathop \Phi\limits _{\sim}}{}_{\bar{x}}=\mathcal{F}[\Phi](t,\bar{x})$, and using the product rule, we have that
\begin{align}
\dot{V}(e)\in\mathcal{\overline U_L}\subseteq \mathcal{\overline V_L}= &
\left\{\sum_{i=1}^N e_i^T{\mathop h\limits _{\sim}}{}_i(t,x_i)+
e^T\mathop \Psi\limits _{\sim}+
\sum_{i=1}^N e_i^T\mathop \xi\limits _{\sim}-\frac{1}{2}
c\sum_{i=1}^N\sum_{j=i}w_{ij}(e_i-e_j)^T{\mathop \eta\limits _{\sim}}(t,e_i-e_j)+
e^T{\mathop \Phi\limits _{\sim}}{}_{\bar{x}}\right.\nonumber\\
&\left.-\sum_{i=1}^N e_i^T{\mathop h\limits _{\sim}}{}_i(t,\bar{x})
\right\},  \label{eq:tot_nonl_subset}
\end{align} 
with $\mathop \xi\limits _{\sim}\in\mathcal{F}\left[-\frac{1}{N}\sum_{j=1}^N f_j(t,x_j)\right]$.
As $\sum_{i=1}^N e_i=0$, we have $\sum_{i=1}^N e_i^T\mathop \xi\limits _{\sim}=0$.
As $\bar{\epsilon}_1<\nu$, inequality \eqref{eq:nonlinear_coupling_inequality} is satisfied for all $t\in [0, t_c]$, where $t_c$ is the time instant at which the average state trajectory may cross the ball of the origin of radius $r_{\max}$, i.e. $\|\bar{x}(t)\|_2>r_{\max}$ for $t>t_c$ (later we will show that such time instant does not exist and therefore \eqref{eq:nonlinear_coupling_inequality} is satisfied for all $t\in [0, +\infty)$ ). Indeed, from Assumptions \ref{ass:negativenonidentical} and \ref{ass3}, we have that a generic element of the set $v_l\in\mathcal{\overline V_L}$ satisfies the following inequality
\begin{equation}\label{eq:v_l_totallynonidentical_nonlineare}
v_l\leq e^T\left[I_N\otimes W^{\mathrm{max}}-cL \otimes \Upsilon\right]e+
e^T\mathop \Psi\limits ^{\sim}+e^T{\mathop \Phi\limits ^{\sim}}_{\bar{x}},\qquad \forall t\in [0, t_c], \quad \forall \mathop \Psi\limits ^{\sim}\in\mathop \Psi\limits _{\sim},\forall {\mathop \Phi\limits ^{\sim}}_{\bar{x}}\in{\mathop \Phi\limits _{\sim}}{}_{\bar{x}},
\end{equation}
and so, decomposing the error $e$ as $\tilde{e}_r$ and $\tilde{e}_{n-r}$ as in the proof of Theorem \ref{th:totallynonidentical}, and following similar steps, we have that 
\begin{equation}\label{eq:final_totallynonidentical_nonlineare_errore}
\dot{V}(e)\leq -m\left(c,W^{\max}\right)a^2+a\sqrt{N}\left(\xoverline[.75]{M}+h_{\mathrm{max}}\right),\qquad \forall t\in [0, t_c].
\end{equation}
Therefore, since hypothesis (ii) holds, it is now clear that if $c> \tilde{c}$, then relation \eqref{eq:nonlinear_coupling_inequality} is feasible as the region $\|e\|_2\leq \bar{\epsilon}_2< e_{\max}/2$ is an invariant region in the error space.
The feasibility of relation \eqref{eq:nonlinear_coupling_inequality} holds until the crossing instant $t_c$. After $t_c$, \eqref{eq:h_max_totallynonidentical_nonlineare} would not be guaranteed any more, as well as inequalities \eqref{eq:v_l_totallynonidentical_nonlineare} and \eqref{eq:final_totallynonidentical_nonlineare_errore}. 
To complete the proof of {\em Case (a)}, we now show that the crossing event never happens and so we can set $t_c=+\infty$.
Let us consider the quadratic function $U=\frac{1}{2}x^Tx$ and evaluate the derivative of $U$ along the trajectories of the network. We have  
\[
\dot{U}(x)\in{\mathop \mathcal{L} \limits_\sim}{}_{\mathcal{F}\left[
\chi_2
\right]}U(x),
\] 
where $\chi_2(t,x)=\Phi(t,x)+\Psi(t,x)-c\mathrm{H}(t,x)$.
Now, using the sum rule, and following similar steps as in Theorem \ref{th:totallynonidentical}, we can write
\[
\dot{U}(x)\in \mathcal{U_L} = 
\left\{
x^T\mathop \Phi\limits _{\sim}+x^T\mathop \Psi\limits _{\sim}
-c x^T\mathop \mathrm{H}\limits _{\sim} \right\}.
\]
Adding and subtracting $x^T{\mathop \Phi\limits _{\sim}}{}_0$, with ${\mathop \Phi\limits _{\sim}}{}_0=\mathcal{F}[\Phi](t,0)$, and using the product rule, we can show that $\mathcal{U_L}$ is included in the set $\mathcal{V_L}$. Namely,
\begin{align}\nonumber 
\mathcal{U_L}\subseteq
\mathcal{V_L}=
\left\{\sum_{i=1}^N x_i^T{\mathop h\limits _{\sim}}{}_i(t,x_i)+
x^T\mathop \Psi\limits _{\sim}-\frac{1}{2}
c\sum_{i=1}^N\sum_{j=i}^N w_{ij}(x_i-x_j)^T {\mathop \eta\limits _{\sim}}(t,x_i-x_j)+
x^T{\mathop \Phi\limits _{\sim}}{}_0-\sum_{i=1}^N x_i^T{\mathop h\limits _{\sim}}{}_i(t,0)
\right\}. 
\end{align}
Notice that, as stated above, relation \eqref{eq:nonlinear_coupling_inequality} holds for all the $t\in[0,t_c]$ and so, using Assumptions \ref{ass3} and \ref{ass:negativenonidentical}, 
for a generic element of the set $v_l\in\mathcal{V_L}$, the following inequality holds
\[
v_l\leq x^T\left[I_N\otimes w_{\mathrm{max}}I_n-c L \otimes \Upsilon\right]x+
x^T\mathop \Psi\limits ^{\sim}+x^T{\mathop \Phi\limits ^{\sim}}_0, \qquad \forall t\in[0,t_c],\quad \forall \mathop \Psi\limits ^{\sim}\in\mathop \Psi\limits _{\sim}, \forall {\mathop \Phi\limits ^{\sim}}_0\in{\mathop \Phi\limits _{\sim}}{}_0.
\]
Then, following the same steps in the proof of Theorem \ref{th:totallynonidentical} we obtain
\begin{equation}\label{eq:final_totallynonidentical_nonlineare_stato}
\dot{U}\leq w_{\mathrm{max}}a^2+a\sqrt{N}\left(\xoverline[.75]{M}+\bar{h}_0\right), \qquad \forall t\in[0,t_c].
\end{equation}
From \eqref{eq:final_totallynonidentical_nonlineare_stato}, we get the radius $\bar{\epsilon}_1$ of an invariant region $\|x\|_2\leq \bar{\epsilon}_1$ for system \eqref{eq:state_stack}. In particular, for any $r\geq \bar{\epsilon}_1$, the region $\|x\|_2\leq r$ is invariant. Since we are considering the case $\bar{\epsilon}_1\leq\nu$, then $\|x\|_2\leq r_{\max}$ is an invariant region for the overall system \eqref{eq:state_stack}. So, the state $x$, as well as $\bar{x}$, will never cross the ball of radius $r_{\max}$ and equations \eqref{eq:final_totallynonidentical_nonlineare_errore} and \eqref{eq:final_totallynonidentical_nonlineare_stato} hold with $t_c=+\infty$. Then comparing these two expressions, bound  \eqref{eq:bound_epsilon_totallynonidentical_nonlinear} holds and the proof for $\bar{\epsilon}_1\leq\nu$ is completed. 

\textit{Case (b): $\bar{\epsilon}_1>\nu$}
\newline
In this case, we have $r_{\max}=\bar{\epsilon}_1+\delta$. Again, we firstly consider the invariant region in the error space and then we analyze invariance in the state space. In particular, for the error invariant region we can follow the same steps of {\em Case (a)} and obtain again equation \eqref{eq:final_totallynonidentical_nonlineare_errore}. 
About the invariance in the state space, it is immediate to see that $\bar{\epsilon}_1$ is invariant. Indeed, if the trajectory $x(t)$ does not cross the boundary $\|x\|_2=\bar{\epsilon}_1$, then it is trivially invariant. On the other hand, if there exists an instant $\bar{t}$ such that $\|x(t)\|_2=\bar{\epsilon}_1$, then it is possible to show invariance of region $\|x\|_2\leq \bar{\epsilon}_1$ considering the proof of {\em Case (a)} from the initial time $\bar{t}$ and initial state $x(\bar{t}\,)$. 

Concluding, also in this case, equations \eqref{eq:final_totallynonidentical_nonlineare_errore} and \eqref{eq:final_totallynonidentical_nonlineare_stato} hold with $t_c=+\infty$ and the theorem is then proved.
\end{proof}
From Theorem \ref{th:totallynonidentical_nonlinear}, an useful corollary follows.
\begin{coro}\label{cor:totallynonidentical_nonlinear}
Consider the nonlinearly coupled network \eqref{eq:genericnetwork} of $N$ negative definite QUAD(I,$W_i$) Affine systems and suppose that the nonlinear coupling protocol satisfies Assumption \ref{ass3} with $e_{\max}=\infty$. Suppose also that each node of the network satisfies Assumption \ref{ass:negativenonidentical} with the choice $P=I$. 
Then, network \eqref{eq:genericnetwork} achieves $\epsilon-$bounded convergence and an upper bound on $\epsilon$ is \eqref{eq:bound_epsilon_totallynonidentical_nonlinear}. 
\end{coro}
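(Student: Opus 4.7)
The plan is to observe that Corollary \ref{cor:totallynonidentical_nonlinear} is the specialization of Theorem \ref{th:totallynonidentical_nonlinear} to the case $e_{\max}=\infty$, so its proof is essentially obtained by rereading that one and checking which hypotheses become vacuous and which delicate step disappears.

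First I would verify that the two hypotheses (i) and (ii) of Theorem \ref{th:totallynonidentical_nonlinear} are automatically fulfilled when $e_{\max}=\infty$: the bound $\|e(0)\|_2\le e_{\max}/2$ is trivially true, and the second condition reduces to a finite quantity being strictly smaller than $+\infty$. Moreover, the coupling-function inequality \eqref{eq:nonlinear_coupling_inequality} now holds on all of $\R^n$, so it can be used at every state without having to confine $e_i-e_j$ to any ball. This is precisely what makes the ``crossing time'' $t_c$ introduced in the proof of Theorem \ref{th:totallynonidentical_nonlinear} disappear: every estimate proved there only on $[0,t_c]$ in fact holds on $[0,+\infty)$, and the preliminary case split on $\bar\epsilon_1\le\nu$ versus $\bar\epsilon_1>\nu$ is no longer needed.

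Next I would re-run the two Lyapunov computations already carried out in Theorem \ref{th:totallynonidentical_nonlinear}, with $Q=P=I$ as required. The state-space function $U(x)=\tfrac12 x^T x$, together with the sum, product and consistency rules for Filippov set-valued maps and the QUAD$(I,W_i)$ Affine assumption on each node, yields exactly as in Step 1 of Theorem \ref{th:totallynonidentical} the dissipation estimate $\dot U\le w_{\max}a^2+a\sqrt{N}(\xoverline[.75]{M}+\bar h_0)$ with $a=\|x\|_2$. Since $w_{\max}<0$ by Assumption \ref{ass:negativenonidentical}, the ball of radius $\bar\epsilon_1$ is globally attracting and forward invariant. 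Consequently $h_{\max}$ defined in \eqref{eq:h_max_totallynonidentical_nonlineare} is finite, which legitimizes the second Lyapunov argument.

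The error-space function $V(e)=\tfrac12 e^T e$ is then treated as in Step 2 of Theorem \ref{th:totallynonidentical_nonlinear}: I would add and subtract the Filippov evaluation of $\Phi$ at the average trajectory $\bar x$, decompose $e$ into its $r$ coupled and $n-r$ uncoupled components, and invoke the now global version of \eqref{eq:nonlinear_coupling_inequality} together with the QUAD$(I,W_i)$ estimate for each $h_i$, to obtain $\dot V\le -m(c,W^{\max})a^2 + a\sqrt{N}(\xoverline[.75]{M}+h_{\max})$ valid on the whole half-line $t\ge 0$. Whenever $c$ is large enough that $m(c,W^{\max})>0$, this yields $\|e\|_2\le\bar\epsilon_2$ asymptotically; for smaller $c$ only the state-space bound $\bar\epsilon_1$ remains informative. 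Taking the intersection of the two invariant regions produces the bound \eqref{eq:bound_epsilon_totallynonidentical_nonlinear}. I do not anticipate a genuine obstacle here: the only delicate point in the proof of Theorem \ref{th:totallynonidentical_nonlinear}, namely keeping trajectories inside the coupling well $\|z\|_2\le e_{\max}$ so that Assumption \ref{ass3} may be used, simply evaporates in the present setting, and the remaining work is essentially bookkeeping.
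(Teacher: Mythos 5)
Your proposal is correct and follows essentially the same route as the paper, which simply observes that for $e_{\max}=\infty$ hypotheses (i) and (ii) of Theorem \ref{th:totallynonidentical_nonlinear} are automatically satisfied and $\tilde{c}=0$ (since $W^{\max}<0$ under Assumption \ref{ass:negativenonidentical}), so the conclusion follows directly from that theorem. Your re-derivation of the two Lyapunov estimates is more detail than the paper supplies but adds nothing beyond the reduction argument, and your observations (the crossing time $t_c$ and the case split on $\bar\epsilon_1$ versus $\nu$ becoming unnecessary) match the mechanics of the cited proof.
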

\begin{proof}
As $e_{\max}\rightarrow +\infty$, the hypotheses {\em (i)}, {\em (ii)}, in Theorem \ref{th:totallynonidentical_nonlinear} are always satisfied and $\tilde{c}=0$. Then, from Theorem \ref{th:totallynonidentical_nonlinear} follows the thesis. 
\end{proof}
As in Section \ref{sec:totally}, we now extend the analysis to the case of networks \eqref{eq:genericnetwork} of QUAD Affine$(P,W)$ systems, with $P=I$, differing only for a bounded component. As in Theorem \ref{th:totallynonidentical_nonlinear}, we denote by $\Upsilon_r$ the $r\times r$ upper left block of matrix $\Upsilon$.
\begin{thm}\label{th:partiallinonidentical_nonlinear}
Let us consider the nonlinearly coupled network \eqref{eq:genericnetwork} of $N$  QUAD(I,$W$) Affine systems satisfying assumption \ref{ass:partiallynonidentical}.
Without loss of generality, we assume the first $\bar{r}\in\left\{0,\ldots,N\right\}$ diagonal elements of W to be non-negative, while the remaining $n-\bar r$ are negative. If Assumption \ref{ass3} holds with $r\ge \bar{r}$ and the following hypotheses hold:
\begin{enumerate}
\item[(i)] The initial error satisfies $\|e(0)\|_2\leq {e_{\max}}/2$, with $e_{\max}$ being defined in Assumption \ref{ass3};
\item[(ii)]
\[
-\frac{\sqrt{N} \xoverline[.75]{M} }{\lambda_{\max}(W_{n-r})}\leq \frac{e_{\max}}{2}
\]
with, as usual, $W_{r}$ and $W_{n-r}$ being the upper-left and the lower-right blocks of matrix $W$, respectively.
\end{enumerate}
Then, choosing a coupling gain $c> \tilde{c}$, with
\begin{equation}\label{eq:c_tilde_partiallynonidentical_nonlinear}
\tilde{c}= \max\left\{
\frac{1}{\lambda_2(L\otimes \Upsilon_r)}\left(\frac{2\sqrt{N}\xoverline[.75]{M}}{e_{\max}}+\lambda_{\max}(W_r)\right),
0\right\},
\end{equation}
network \eqref{eq:genericnetwork} achieves $\epsilon$-bounded convergence. Furthermore, an upper bound on $\epsilon$ is   
\begin{equation}
 \bar{\epsilon}=\frac{\xoverline[.75]{M}\sqrt{N}}{m_\mathrm{nl}(c,W)},\label{eq:varepsilon_ub_sub2}
\end{equation}
where the function $m_\mathrm{nl}$ is a real function defined as 
\begin{equation}
\displaystyle m_\mathrm{nl}(c, W) =-\max \left\{ \lambda_{\mathrm{max}}(W_r)- c \lambda_2(L \otimes \Upsilon_r),\lambda_{\mathrm{max}}(W_{n-r})\right\}.
\label{eq:mcp_partiallynonidentical2}
\end{equation} 
\end{thm}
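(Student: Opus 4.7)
The plan is to replicate the error-space portion of the proof of Theorem \ref{th:totallynonidentical_nonlinear}, taking advantage of the substantial simplification offered by Assumption \ref{ass:partiallynonidentical}: since the QUAD drift $h$ is now common to all nodes, no state-space invariance argument is required, and the term proportional to $\bar h_0$ disappears because the QUAD inequality will be applied directly around $\bar x$ rather than around the origin. Accordingly I pick $V(e)=\tfrac12 e^T e$ and compute $\dot V \in {\mathop{\mathcal L}\limits_\sim}{}_{\mathcal{F}[\chi]}V(e)$ along \eqref{eq:error_stack} with $\chi(t,x,e)=\Phi(t,x)+\Psi(t,x)+\Xi(t,x)-c\mathrm{H}(t,e)$, invoking the sum, product, and consistency rules from Section \ref{sec:pws} exactly as in Theorem \ref{th:totallynonidentical_nonlinear}.

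Next I would add and subtract $e^T{\mathop \Phi \limits _{\sim}}{}_{\bar x}$ with ${\mathop \Phi \limits _{\sim}}{}_{\bar x}=\mathcal{F}[\Phi](t,\bar x)$ and use the QUAD(I,W) property to bound $\sum_i e_i^T[{\mathop h \limits ^{\sim}}(t,x_i)-{\mathop h \limits ^{\sim}}(t,\bar x)]$ by $e^T(I_N\otimes W)e$; the term $\sum_i e_i^T {\mathop \xi \limits ^{\sim}}$ vanishes since $\sum_i e_i=0$, and $|e^T{\mathop \Psi\limits ^{\sim}}|$ is controlled by $\|e\|_2\sqrt N\xoverline[.75]{M}$. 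The coupling term is handled by Assumption \ref{ass3}: after symmetrizing the double sum using the componentwise oddness of $\eta$, it is bounded above by $-c\,e^T(L\otimes\Upsilon)e$, provided that the pairwise errors stay within the range where \eqref{eq:nonlinear_coupling_inequality} holds. Splitting $e$ into the coupled block $\tilde e_r$ and the uncoupled block $\tilde e_{n-r}$ induced by the zero/nonzero diagonal entries of $\Upsilon$, and writing $e=a\hat e$ with $\hat e=e/\|e\|_2$, yields the scalar inequality
\[
\dot V(e)\le -m_\mathrm{nl}(c,W)\,a^2+a\sqrt{N}\,\xoverline[.75]{M},
\]
from which the estimate \eqref{eq:varepsilon_ub_sub2} on $\epsilon$ and the threshold \eqref{eq:c_tilde_partiallynonidentical_nonlinear} follow immediately by requiring the right-hand side to be negative.

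The delicate step, and the reason for hypotheses (i) and (ii), is to guarantee that Assumption \ref{ass3} remains applicable for all $t\ge 0$, since \eqref{eq:nonlinear_coupling_inequality} is only assumed on $\|z\|_2\le e_{\max}$. Because $\|e_i-e_j\|_2\le 2\|e\|_2$, it suffices to keep $\|e(t)\|_2\le e_{\max}/2$ throughout. Hypothesis (i) provides this at $t=0$; the choice $c>\tilde c$ in \eqref{eq:c_tilde_partiallynonidentical_nonlinear} ensures that the invariant radius for the coupled block $\tilde e_r$, namely $\sqrt N\xoverline[.75]{M}/(c\lambda_2(L\otimes\Upsilon_r)-\lambda_{\max}(W_r))$, does not exceed $e_{\max}/2$; and hypothesis (ii) does the same for the uncoupled block $\tilde e_{n-r}$, whose natural invariant radius $-\sqrt N\xoverline[.75]{M}/\lambda_{\max}(W_{n-r})$ is explicitly required to lie below $e_{\max}/2$. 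A standard bootstrap/continuity argument then shows that the set $\{\|e\|_2\le e_{\max}/2\}$ is forward invariant, so that the bound on $\dot V$ derived above is valid for all $t\ge 0$ and the conclusion follows. Unlike in the proof of Theorem \ref{th:totallynonidentical_nonlinear}, no accompanying analysis of a state-space invariant region is necessary, because the common drift $h$ cancels in the error equation up to a QUAD-controlled remainder.
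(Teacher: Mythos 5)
Your proposal is correct and follows essentially the same route as the paper's proof in Appendix B: the quadratic form $V(e)=\tfrac12 e^Te$, the add-and-subtract of the common drift evaluated at $\bar x$ (which vanishes against $\sum_i e_i=0$, eliminating the $\bar h_0$ and $h_{\max}$ terms), the bound via Assumptions \ref{ass:partiallynonidentical} and \ref{ass3}, and the block decomposition into $\tilde e_r$ and $\tilde e_{n-r}$. Your explicit forward-invariance argument for $\{\|e\|_2\le e_{\max}/2\}$ via $\|e_i-e_j\|_2\le 2\|e\|_2$ is in fact slightly more detailed than the paper's, which only asserts that $\tilde c$ and hypothesis (ii) keep \eqref{eq:nonlinear_coupling_inequality} feasible by analogy with Theorem \ref{th:totallynonidentical_nonlinear}.
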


\begin{proof}
See Appendix \ref{app:b}.
\end{proof}
As in Section \ref{sec:totally}, we also provide a useful corollary.
\begin{coro}
Let us consider the nonlinearly coupled  network \eqref{eq:genericnetwork} of $N$ QUAD(I,$W$) Affine systems satisfying Assumption \ref{ass:partiallynonidentical} and Assumption \ref{ass3} with $e_{\max}=\infty$. 
Choosing a coupling gain $\hat{c}\geq \tilde{c}$, with $\tilde{c}$ defined in \eqref{eq:c_tilde_partiallynonidentical_nonlinear}, network \eqref{eq:genericnetwork} achieves $\epsilon$-bounded convergence with an upper bound on $\epsilon$ given in \eqref{eq:varepsilon_ub_sub2}.   
\end{coro}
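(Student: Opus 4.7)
The plan is to derive the corollary directly from Theorem \ref{th:partiallinonidentical_nonlinear} by observing that the choice $e_{\max}=\infty$ degenerates both hypotheses into trivialities. Hypothesis (i) of the theorem demands $\|e(0)\|_2\le e_{\max}/2$, which is automatically satisfied for every admissible initial condition when $e_{\max}=\infty$. Hypothesis (ii) requires
\[
-\frac{\sqrt{N}\,\xoverline[.75]{M}}{\lambda_{\max}(W_{n-r})}\le \frac{e_{\max}}{2},
\]
and the right-hand side becomes $+\infty$, so this inequality holds for any finite $\xoverline[.75]{M}$ and any $W_{n-r}<0$. Thus the structural preconditions of Theorem \ref{th:partiallinonidentical_nonlinear} impose no restriction on the initial state nor on the mismatch sizes.

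Next, I would inspect the expression \eqref{eq:c_tilde_partiallynonidentical_nonlinear} for $\tilde c$ in the limit $e_{\max}\to\infty$. The term $2\sqrt{N}\,\xoverline[.75]{M}/e_{\max}$ vanishes, and
\[
\tilde c=\max\!\left\{\frac{\lambda_{\max}(W_r)}{\lambda_2(L\otimes \Upsilon_r)},\,0\right\},
\]
which coincides with the expression used in the corollary and is exactly the minimum coupling strength required to overcome the positive diagonal entries of $W_r$ through the nonlinear coupling action. Choosing $\hat c\ge \tilde c$ then satisfies the coupling bound of Theorem \ref{th:partiallinonidentical_nonlinear} verbatim.

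Finally, I would appeal directly to the theorem to conclude $\epsilon$-bounded convergence of network \eqref{eq:genericnetwork} with the upper bound \eqref{eq:varepsilon_ub_sub2}, since the expression for $m_{\mathrm{nl}}(c,W)$ in \eqref{eq:mcp_partiallynonidentical2} does not depend on $e_{\max}$ and the inequality \eqref{eq:nonlinear_coupling_inequality} in Assumption \ref{ass3} now holds globally on $\R^n$. The only step that requires a bit of care is verifying that the region-of-validity argument used inside the proof of Theorem \ref{th:partiallinonidentical_nonlinear} collapses to a global statement when $e_{\max}=\infty$: since \eqref{eq:nonlinear_coupling_inequality} no longer needs to be restricted to a ball, the crossing-time argument becomes vacuous and the derivative estimate on the Lyapunov function $V(e)=\tfrac12 e^T e$ holds for all $t\ge 0$, yielding the claimed bound without any invariance check in state space. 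This is the only point where the proof effectively simplifies rather than merely inherits from the parent theorem.
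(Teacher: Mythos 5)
Your proposal is correct and follows essentially the same route as the paper: the paper's own proof simply observes that with $e_{\max}=\infty$ hypotheses (i) and (ii) of Theorem \ref{th:partiallinonidentical_nonlinear} are automatically satisfied and then invokes that theorem. Your additional observations — that the $2\sqrt{N}\,\xoverline[.75]{M}/e_{\max}$ term in $\tilde c$ vanishes and that the crossing-time/invariance argument becomes vacuous when \eqref{eq:nonlinear_coupling_inequality} holds globally — are accurate elaborations of the same argument rather than a different method.
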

\begin{proof}
As $e_{\max}\rightarrow +\infty$, the hypotheses {\em (i)} and {\em (ii)} of Theorem \ref{th:partiallinonidentical_nonlinear} are always satisfied. Then, from Theorem \ref{th:partiallinonidentical_nonlinear}, the thesis follows.
\end{proof}

\section{Applications}
Here, we validate and illustrate the theoretical derivation using a set of representative numerical examples. Specifically, in Section \ref{sec:ikeda}, a network of Ikeda systems is considered to validate Theorems \ref{th:totallynonidentical} and \ref{th:totallynonidentical_nonlinear}, while Theorem \ref{th:partiallynonidentical_partiallycoupled} is used in Section \ref{sec:chua} to estimate the minimum coupling strength guaranteeing bounded synchronization in networks of Chua's circuits.
Then, in Section \ref{sec:relay}, Corollary \ref{cor:partiallynonidentical} is used to study convergence of coupled chaotic relays. Finally, in Section \ref{sec:kur} we study the convergence properties of nonuniform Kuramoto oscillators applying Theorem \ref{th:partiallinonidentical_nonlinear}.
\subsection{Networks of Ikeda systems}\label{sec:ikeda}
\begin{figure}
\centering
\subfigure[State evolution.]{\label{x_10ik}\includegraphics[width=8.6cm]{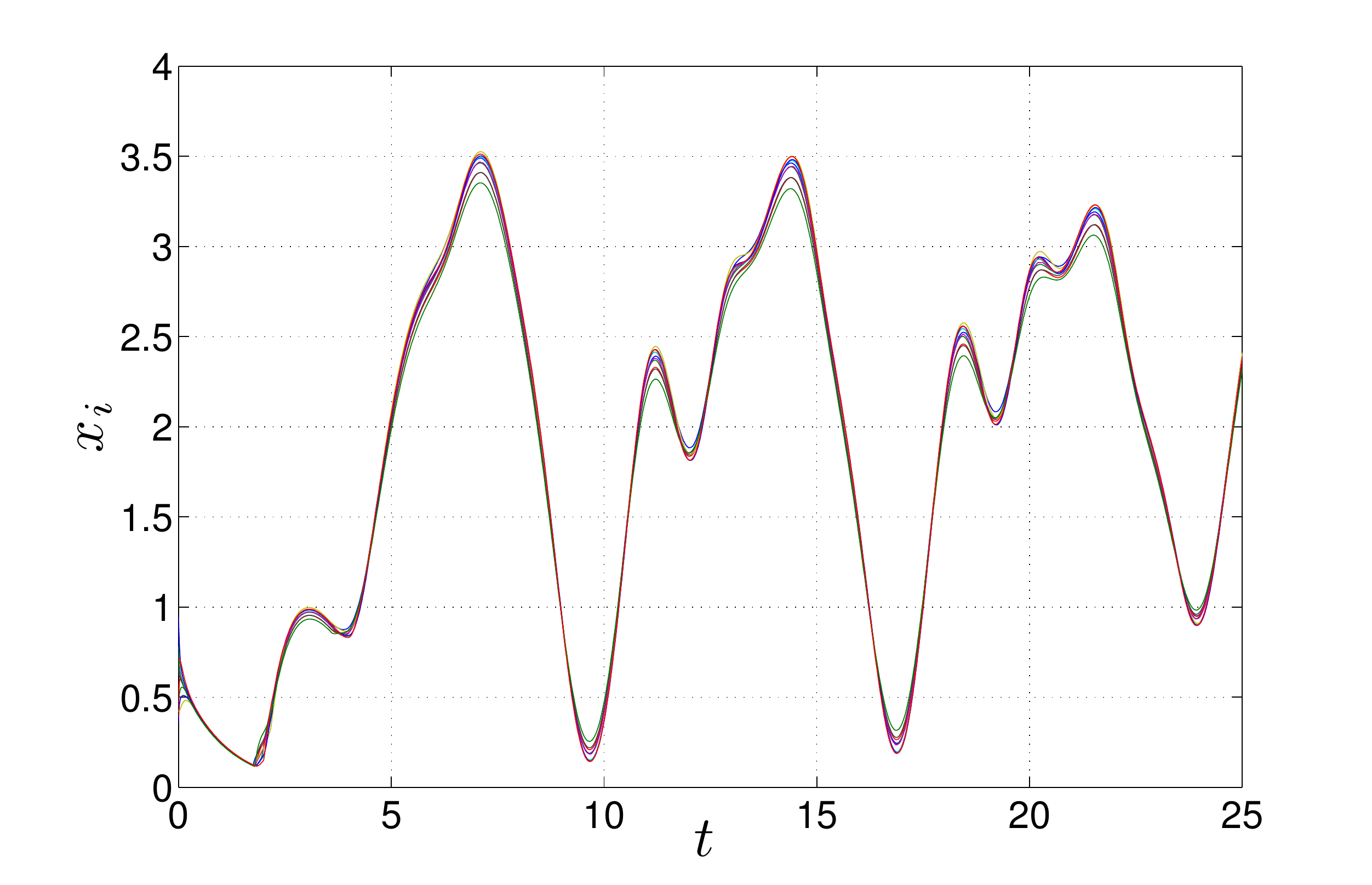}}
\subfigure[Norm of the error.]{\label{e_10ik}\includegraphics[width=8.6cm]{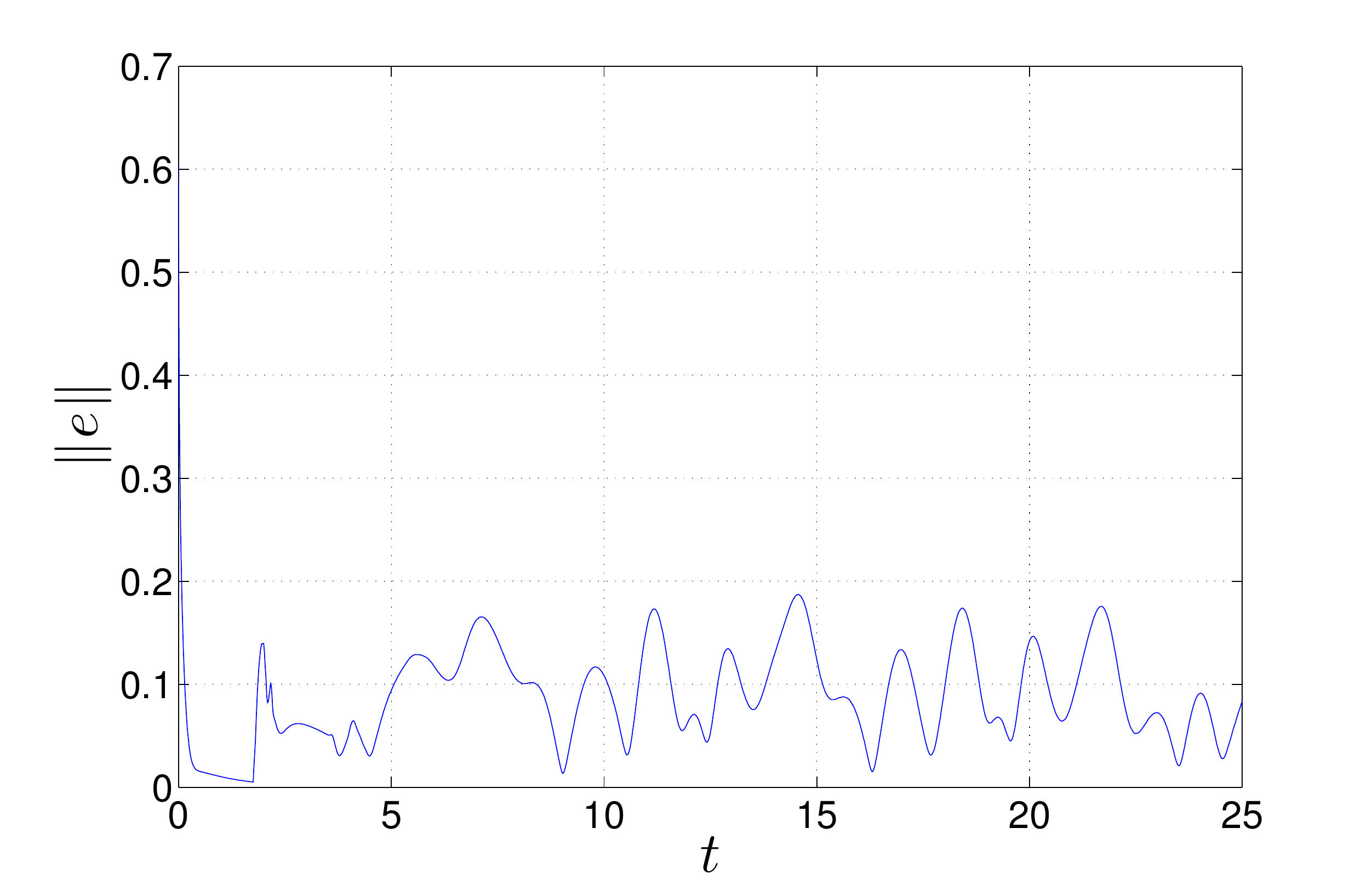}}
\caption{Network of $10$ linearly coupled nonidentical Ikeda systems. Coupling gain $c=20$.}
\label{ikeda10}
\end{figure}

\begin{figure}[h!]
  \centering 
  \includegraphics[width=9.6cm]{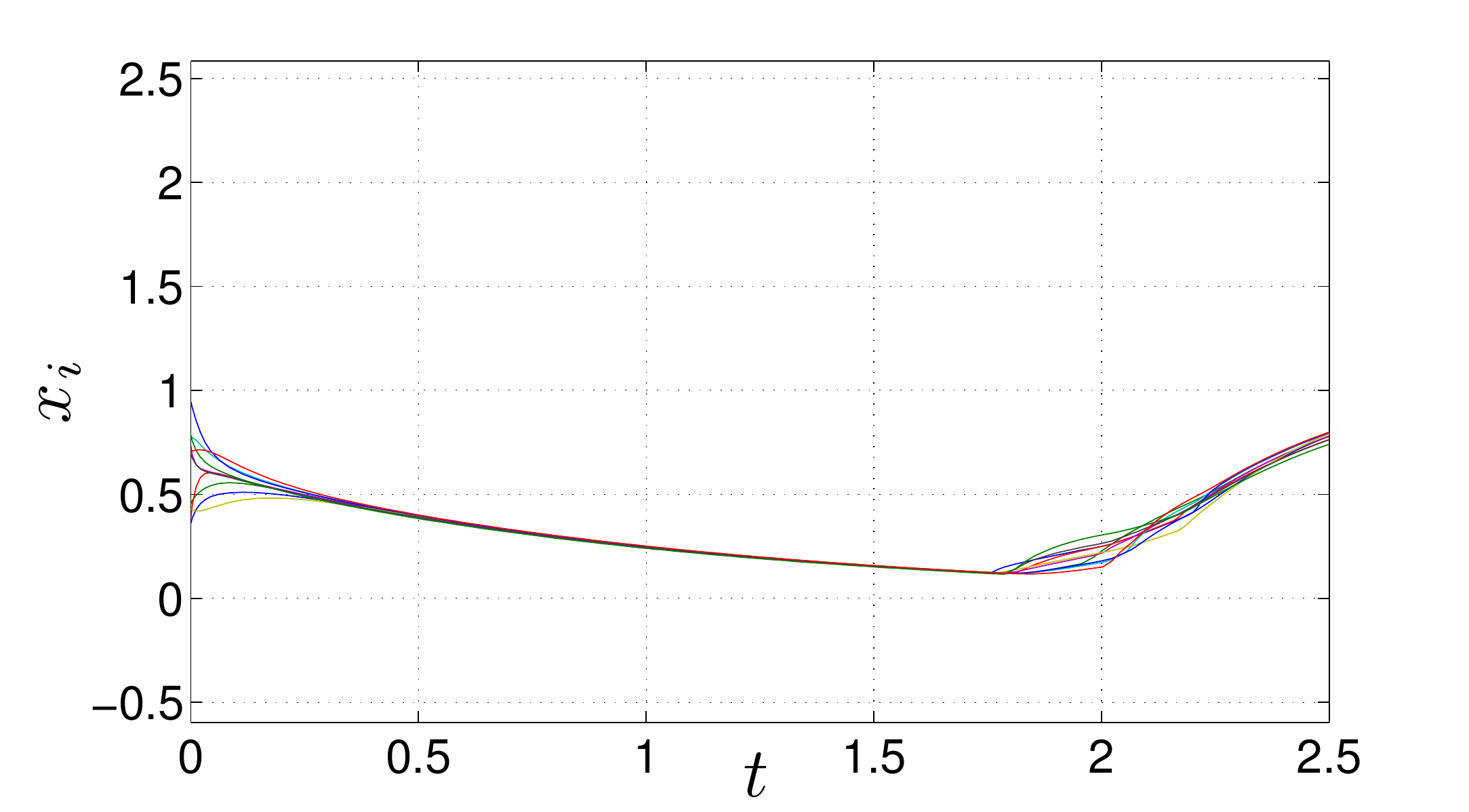}
  \caption{Network of $10$ linearly coupled nonidentical Ikeda systems, $c=20$: transient dynamics.}
  \label{fig:x_10ik_zoom_transient}
\end{figure}

To clearly illustrate Theorems \ref{th:totallynonidentical} and \ref{th:totallynonidentical_nonlinear}, we study the convergence of a network of nonidentical Ikeda systems.
The Ikeda model has been proposed as a standard model of optical turbulence in nonlinear optical resonators, see \cite{ik:79,ikda:80,ikma:86} 
for further details. The optical resonator can be described by
\begin{equation}
\dot{x}_i=-a_i x_i + b_i \sin (x_i(t-\tau_i)),\nonumber
\label{eq:ikeda}
\end{equation}
where $a_i$, $b_i$ and $\tau_i$ are positive scalars. As reported in \cite{huli:09}, this system exhibits chaotic behavior when $\tau_i = 2$, $a_i = 1$ and $b_i = 4$.
Synchronization of coupled Ikeda systems with parameter mismatches was studied in many recent works, see for instance \cite{huli:07,huli:09,shnu:08}, but it is assumed {\em a priori} that the trajectory of each node is bounded.
Applying Theorem \ref{th:totallynonidentical}, we do not need this assumption, and we can show that a network of coupled Ikeda oscillators converges to a bounded 
set. In facts, it is easy to show that the assumptions of Theorem \ref{th:totallynonidentical} are satisfied: the vector field $f_i(t,x_i)$ describing the nodes' dynamics is a QUAD(P,W) Affine system of the form
$$ \dot{x}_i=h_i(t,x_i)+g_i(t,x_i),$$
where $h_i(t,x_i)=-a_i x_i$ is QUAD with $P=p>0$ and $W=w$ such that $-pa_i\leq w <0$, and $g_i(t,x_i)=b_i \sin (x_i(t-\tau_i))$ is the affine bounded (smooth) term, with $|g_i(t,x_i)|\le b_i$. 
Notice that the presence of the delayed state does not prevent the application of Theorems \ref{th:totallynonidentical} and \ref{th:totallynonidentical_nonlinear}, as it affects a bounded component. 
Hence, from Theorem \ref{th:totallynonidentical}, we obtain a strong result: a network of nonidentical Ikeda systems is $\epsilon$-bounded synchronized for any possible value of the positive scalars $a_i$, $b_i$ and $\tau_i$, and for any positive coupling strength $c>0$, .
Here, it is worth remarking that this result is independent from the value of the delays $\tau_i$ and from the choice of $c$. In all previous works, $\tau$ was considered 
identical from node to node and bounded synchronization was proven only for $c>\bar c$, with $\bar c>0$ \cite{huli:07,huli:09,shnu:08}. Moreover, Theorem \ref{th:totallynonidentical} also provides an estimation of the bound $\epsilon$, that can be made arbitrarily small by increasing $c$.

\begin{figure}
\centering
\subfigure[Actual steady-state norm of the error.]{\label{eps_10ik}\includegraphics[width=8.6cm]{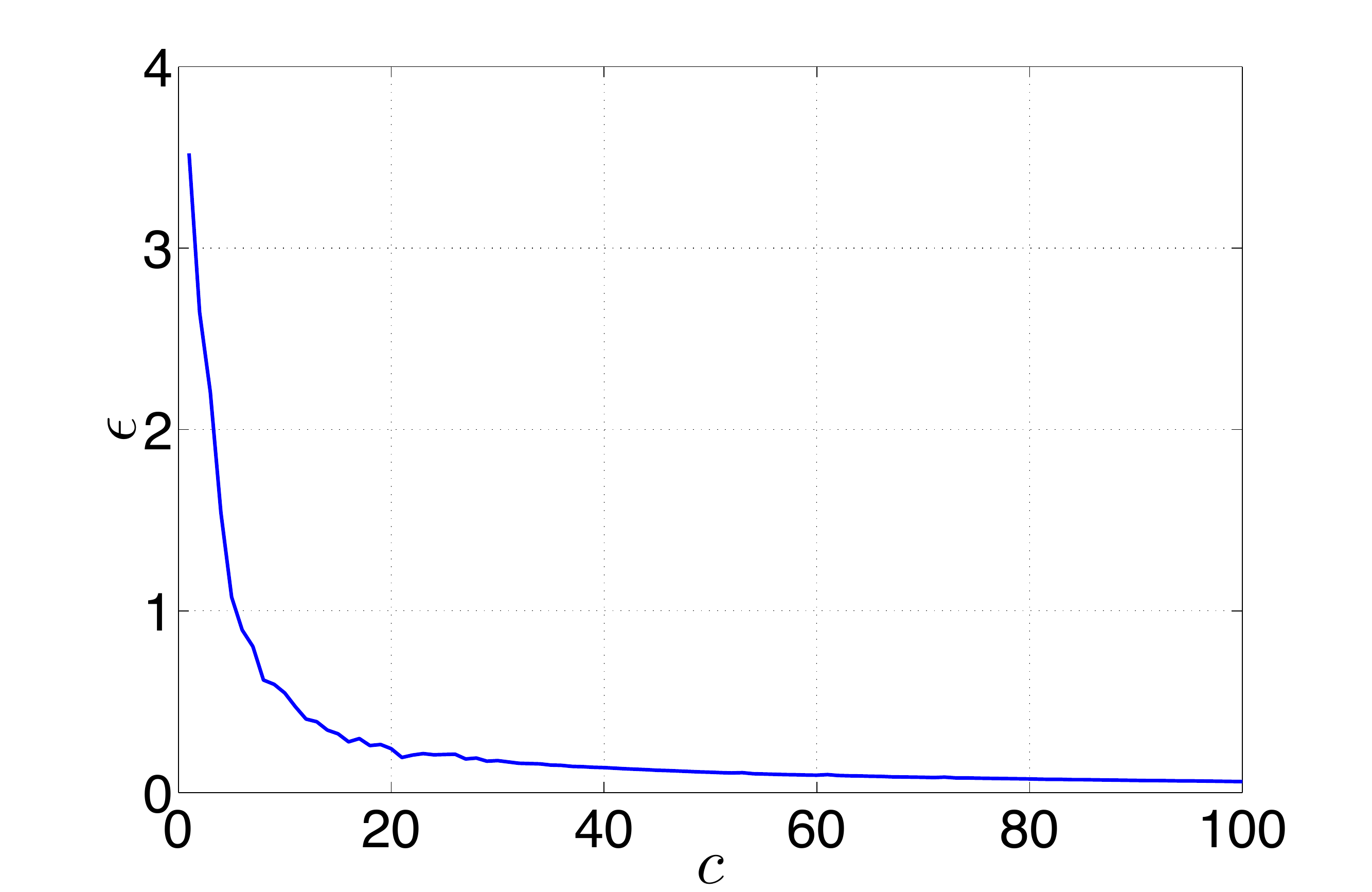}}
\subfigure[Upper bound for $\epsilon$, computed according to Theorem \ref{th:totallynonidentical}.]{\label{eps_tilde_10ik}\includegraphics[width=8.6cm]{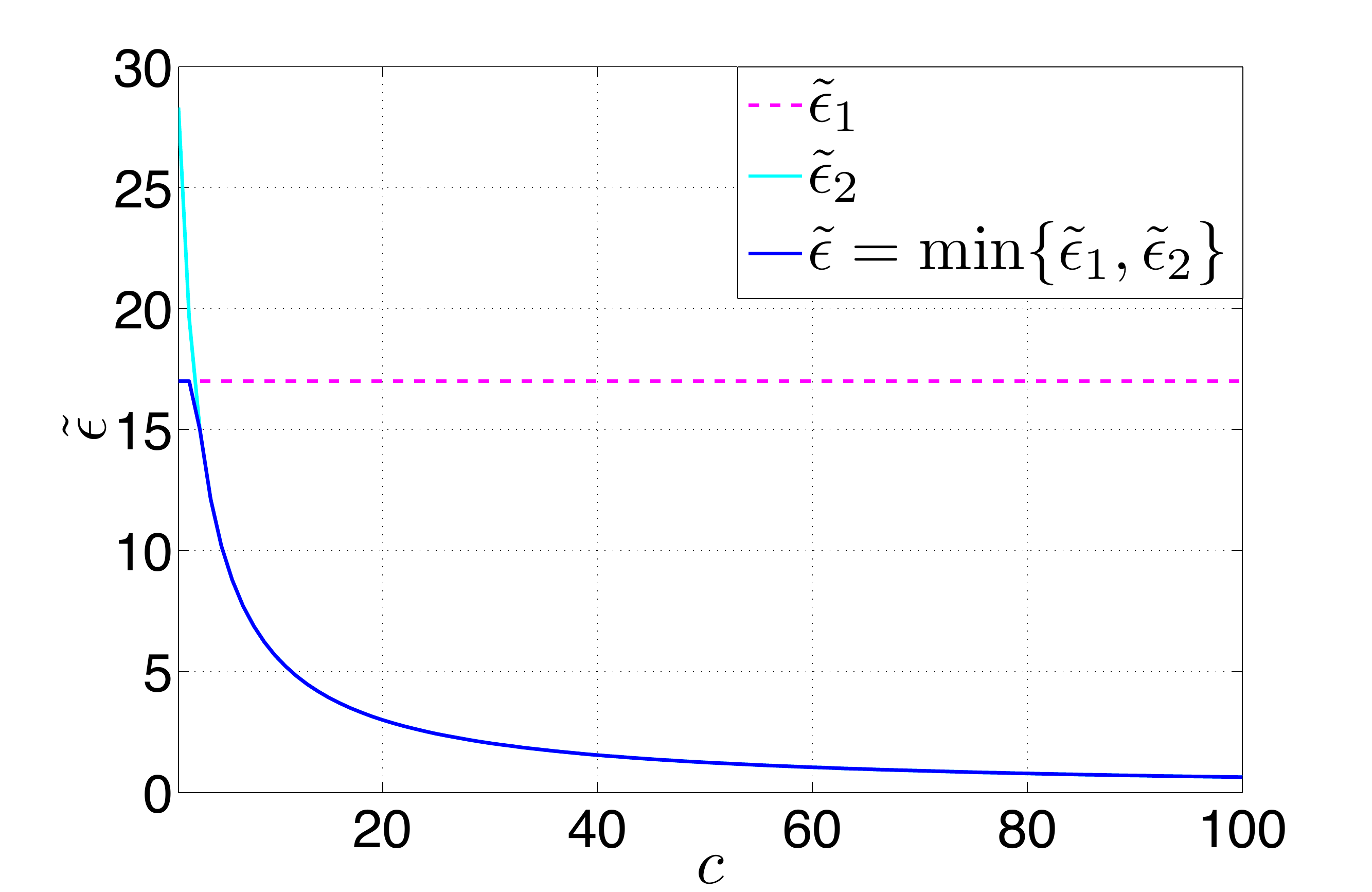}}
\caption{Network of $10$ nonidentical Ikeda systems.}
\label{ikeda10_eps}
\end{figure}

As an example, we consider a randomly generated network of 10 nodes. The initial conditions are taken randomly from a normal distribution.
Furthermore, we assume that $a_i=a+\delta a_i$, $b_i=b+\delta b_i$ and $\tau_i=\tau+\delta \tau_i$, where $a=1$, $b=4$, and $\tau=2$ are the 
nominal values of the parameters, while the parameters' mismatches are represented by $\delta a_i$, $\delta b_i$ and $\delta \tau_i$, and are taken randomly from a uniform distribution in $[-0.25,0.25]$.
As expected from the the theoretical predictions, the representative simulation with coupling gain $c=20$ shown in Figure \ref{ikeda10} confirms that $\epsilon$-bounded synchronization is achieved. In Figure \ref{fig:x_10ik_zoom_transient}, the onset of the state evolution is depicted to illustrate the transient dynamics.
Then, in Figure \ref{ikeda10_eps}, we report the upper bound for the steady-state error norm estimated for coupling strength $c$ ranging from $1$ to $100$ (Figure \ref{eps_tilde_10ik}), which is consistent with the maximum steady-state error norm evaluated numerically (Figure \ref{eps_10ik}). This upper bound is clearly conservative, but allows us to predict the exponential decay of $\epsilon$ as $c$ increases.

\begin{figure}[h!]
\centering
\subfigure[State evolution.]{\label{fig:ikeda_stateevolution_discontinuouscoupling_sgranata}\includegraphics[width=8.4cm]{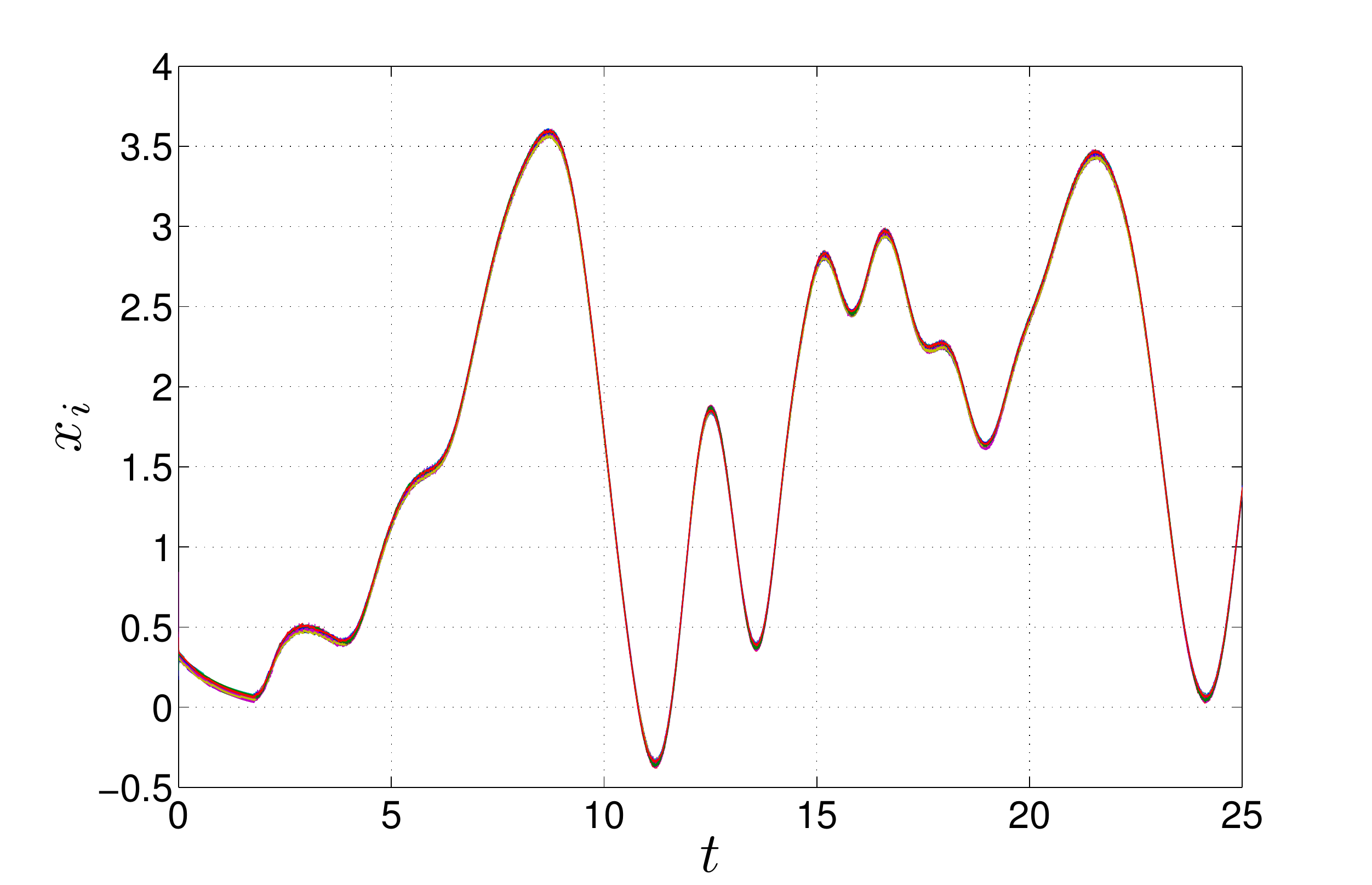}}
\subfigure[Norm of the error.]{\label{fig:ikeda_normerror_discontinuouscoupling_sgranata}\includegraphics[width=8.4cm]{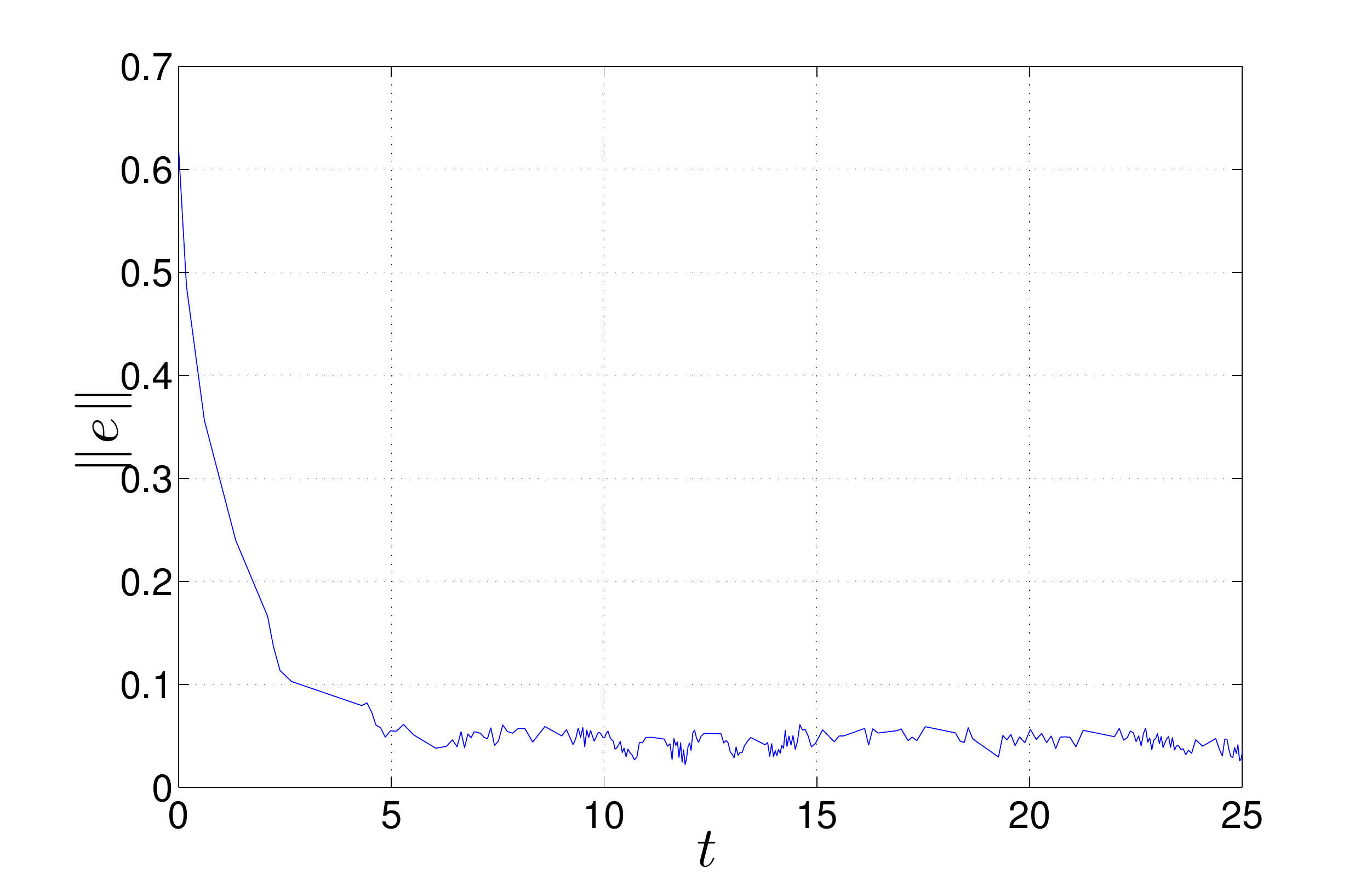}}
\caption{Network of $10$ nonlinearly coupled nonidentical Ikeda systems. Coupling gain $c=20$.}
\label{ikeda10_disc}
\end{figure}


Now, we consider a network of Ikeda systems with the same coupling gain, but we introduce the following piecewise smooth nonlinear coupling $\eta(z): \R\mapsto \R$:
\begin{equation}
\eta(z)=\begin{cases}
\mathrm{sign}(z) & \textit{ if }|z|<1, \\
\mathrm{sign}(z)[(|z|-1)^2+1] & \textit{ if }|z|\geq 1.
\end{cases}
\label{eq:nonl_coup_ik}
\end{equation}
This nonlinear coupling has no physical meaning, but has been introduced to show how $\epsilon$-bounded convergence can also be enforced through a piecewise smooth coupling satisfying Assumption \ref{ass3}.
Figures \ref{fig:ikeda_stateevolution_discontinuouscoupling_sgranata} and \ref{fig:ikeda_normerror_discontinuouscoupling_sgranata} confirm that bounded convergence is achieved considering the same coupling gain $c=20$.

\subsection{Networks of Chua's circuits}\label{sec:chua}
Let us consider now a network of  Chua's circuits \cite{Ma:84} -- a paradigmatic example often used in the literature on synchronization of nonlinear oscillators -- assuming each circuit is forced by a squarewave input. Namely, the own dynamics of the $i$-th system can be written as $\dot{x}_i=h(t,x_i)+g_i(t,x_i)$. 
The unforced dynamics are described by $h=[h_1,h_2,h_3]^T$. Namely,
\begin{align*}
{h}_{1}(x_i)&=\alpha\left[x_{i2}-x_{i1}-\varphi(x_{i1})\right],\\
{h}_{2}(x_i)&= x_{i1}-x_{i2}+x_{i3},\\
{h}_{3}(x_i)&=-\beta x_{i2},
\end{align*}
where, according to \cite{Ma:84}, $\alpha=10$, $\beta=17.30$, and $\varphi(x_{i1})=bx_{i1}+ (a-b)(\left|x_{i1}+1\right|-\left|x_{i1}-1\right|)/2$, with $a=-1.34$, $b=-0.73$. The squarewave input $g_i=[g_{i1},0,0]^T$ acts on the first variable and is defined as
$$g_{i1}(t)=\mathrm{sgn}(\sin(t-i\pi/N)).$$
Notice that the vector fields of the Chua's circuits are  nonidentical QUAD(P,W) Affine and satisfy Assumption \ref{ass:partiallynonidentical}. In fact, for any $P\in\mathcal{D}^+$, and for any $x,y\in\mathbb{R}^3$, we can write
\begin{align*}
(x-y)^T P (h(x)-h(y))&=-10p_1 e_1^2-p_2 e_2^2+(10 p_1+p_2)e_1e_2+(p_2-17.3p_3)e_2e_3+10p_1e_1(\varphi(y_1)-\varphi(x_1))\\
& \le  3.4 p_1 e_1^2-p_2 e_2^2+(10 p_1+p_2)e_1e_2+(p_2-17.3p_3)e_2e_3,
\end{align*}
where $e=x-y$, and where we have considered the maximum slope of the nonlinear function $\varphi(\cdot)$ to get the above inequality.
Taking $p_2=17.3p_3$, and being $e_1 e_2\le\left\|e_1e_2\right\|\le(\rho e_1^2+e_2^2/\rho)/2$ for all $\rho>0$, one has
\begin{align}
(x-y)^T P (h(x)-h(y))& \le 3.4 p_1 e_1^2-p_2 e_2^2+(10 p_1+p_2)e_1e_2\nonumber\\
& \le  3.4 p_1 e_1^2-p_2 e_2^2 + (10p_1+p_2)(\rho e_1^2+e_2^2/\rho)/2\nonumber\\
& = \left(3.4 p_1+\rho\frac{10 p_1+p_2}{2}\right)e_1^2+\left(\frac{10p_1+p_2}{2\rho}-p_2\right)e_2^2.\label{eq:chua_quad}
\end{align}
Moreover, for all $x\in\mathbb{R}^3$, $\left\|g(t,x)\right\| \le \xoverline[.75]{M}=1$. Therefore, one finally obtains that the forced Chua's circuit are QUAD(P,W) Affine systems for any pair (P,W) such that $p_2=17.3 p_3$ and $w_1\ge3.4 p_1+\rho(10p_1+p_2)/2$ and $w_2\ge -p_2 + (10p_1+p_2)/2\rho$. Therefore, it is possible to take $p_1$, $p_2$ and $\rho$ such that $w_2$ can be negative.
%
Hence, if we select $\Gamma=\mathrm{diag}\left\{1,0,1\right\}$, we have that all the assumptions of Theorem \ref{th:partiallynonidentical_partiallycoupled} are satisfied with $l=2$.\footnote{The application of Theorem \ref{th:partiallynonidentical_partiallycoupled} requires a trivial reordering of the state variables, that we omit here.}

Notice that Theorem \ref{th:partiallynonidentical_partiallycoupled} can be used to estimate the minimum coupling strength guaranteeing bounded synchronization. From \eqref{eq:c_ub_sub} and \eqref{eq:chua_quad} follows that the estimation $\tilde{c}$ is the solution of the following constrained optimization problem:
\begin{align*}
& \tilde{c}=\frac{1}{\lambda_2(L)}\min_{p_1,p_2,\rho}\frac{3.4 p_1+\rho\frac{10p_1+p_2}{2}}{\min\left\{p_1,\frac{p_2}{17.3}\right\}}\\
& \frac{10p_1+p_2}{2\rho}-p_2<0\\
&p_1,p_2,\rho>0
\end{align*}
where the first constraint ensures that $P$ is selected so that the system is QUAD(P,W), with $w_2<0$, thus allowing the selection of $\Gamma=\mathrm{diag}\left\{1,0,1\right\}$ according to Theorem \ref{th:partiallynonidentical_partiallycoupled}. 
Using the standard Matlab routines for constrained optimization problems, one easily obtains $\tilde{c}=14.17/\lambda_2(L)$.
In this example, we consider a network of $N=10$ nodes with a connected random graph \cite{erre:59} with $\lambda_2(L)=2.22$, from which follows that $\tilde{c}=6.4$. Accordingly, we select $c=10>\tilde{c}$. Figure \ref{fig:x1_caos} shows the time evolution for the first component of the Chua's oscillators, both for the uncoupled and the coupled case. 
It is possible to observe that a a reduced mismatch between the nodes' trajectories remains, as can be noted from the plot of the error norm, depicted in Figure \ref{fig:norma_errore_caos}. This simulation has been obtained considering random initial conditions in the domain of the chaotic attractor. However, it is worth mentioning that since Theorem \ref{th:partiallynonidentical_partiallycoupled} gives global synchronization conditions, bounded synchronization is ensured also in the case of divergent dynamics, as shown 
in Figure \ref{fig:norma_errore_divergenti}, where some initial conditions have been randomly chosen outside the domain of the attractor.
\begin{figure}[h!]
\centering
\subfigure[]{\label{fig:x1_nonaccoppiati_caos}\includegraphics[width=8.6cm]{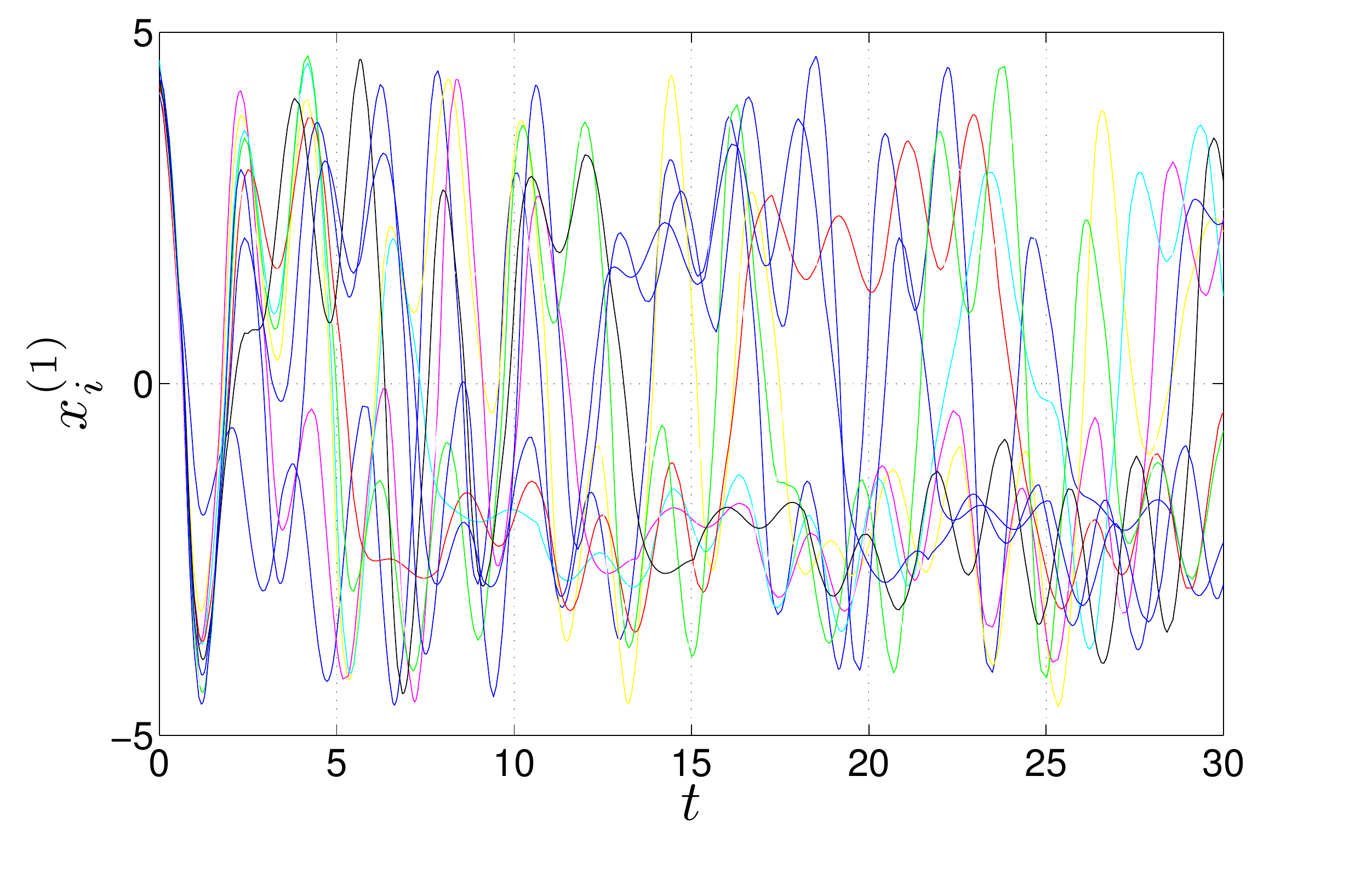}}
\subfigure[]{\label{fig:x1_accoppiati_caos}\includegraphics[width=8.6cm]{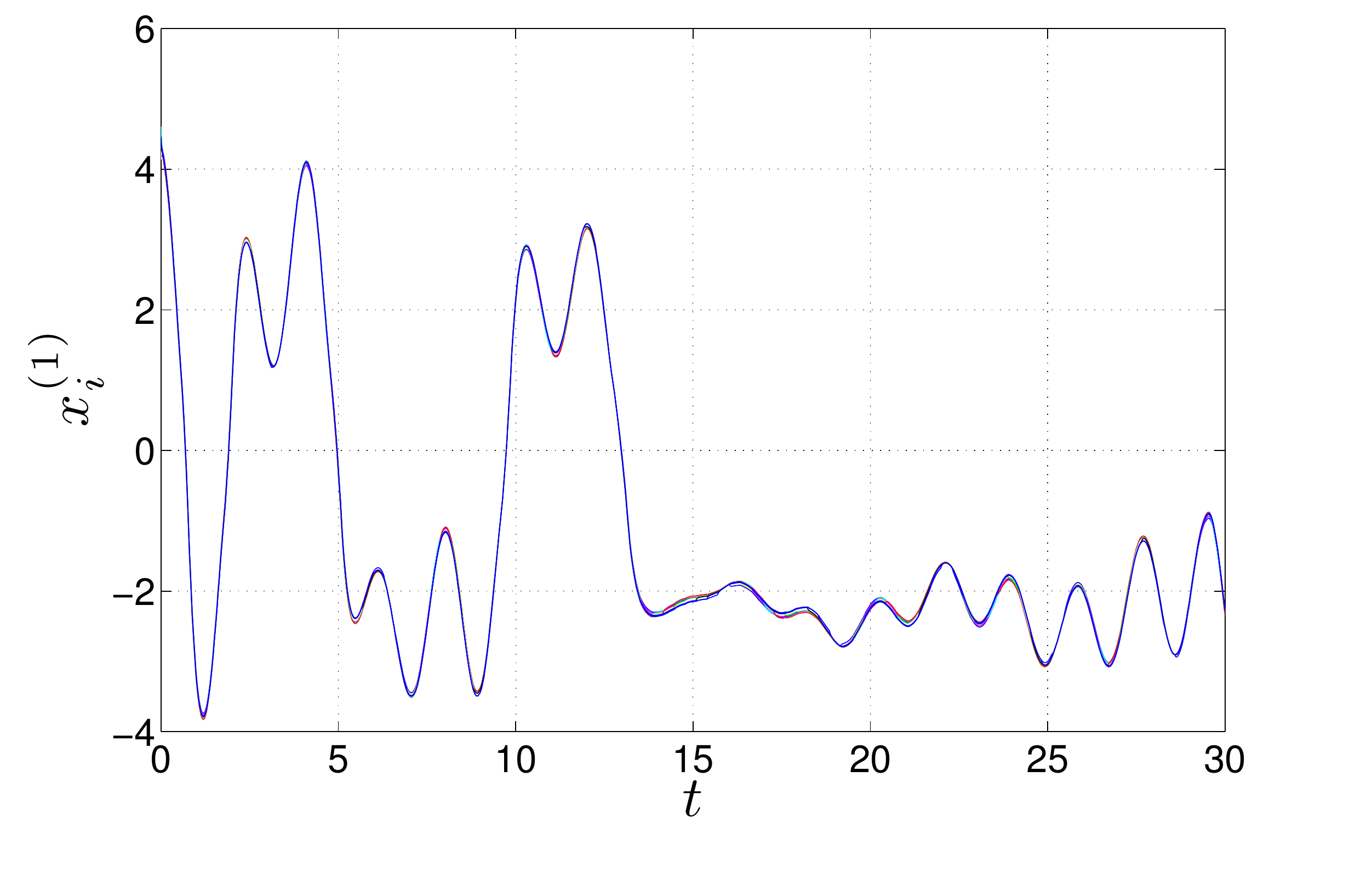}}
\caption{Time evolution of component $x_i^{(1)}(t)$ for the network of Chua circuits: (a) uncoupled case; (b) coupled case.}
\label{fig:x1_caos}
\end{figure}
\begin{figure}[h!]
\centering
\subfigure[]{\label{fig:norma_errore_nonaccoppiati_caos}\includegraphics[width=8.6cm]{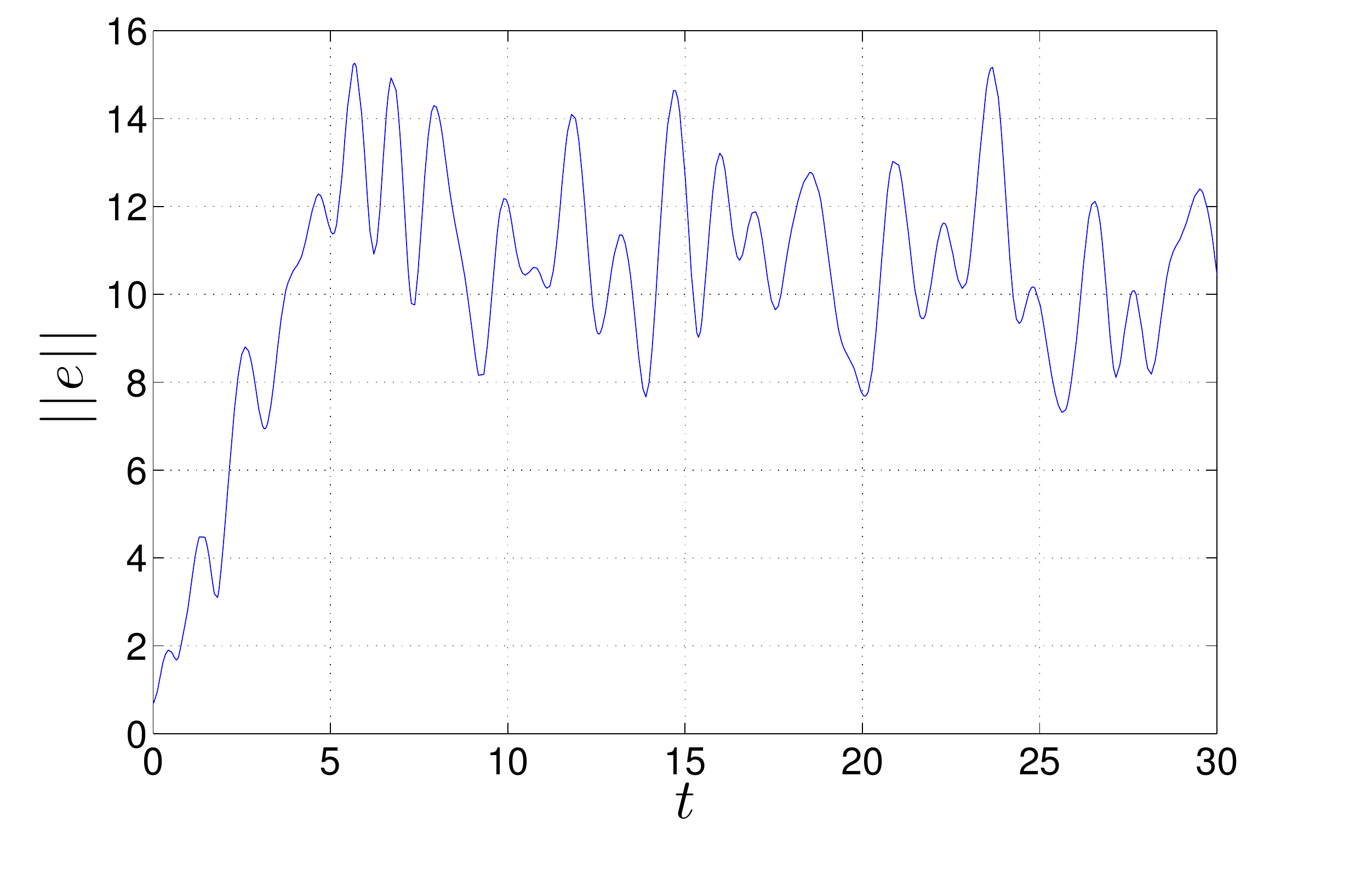}}
\subfigure[]{\label{fig:norma_errore_accoppiati_caos}\includegraphics[width=8.6cm]{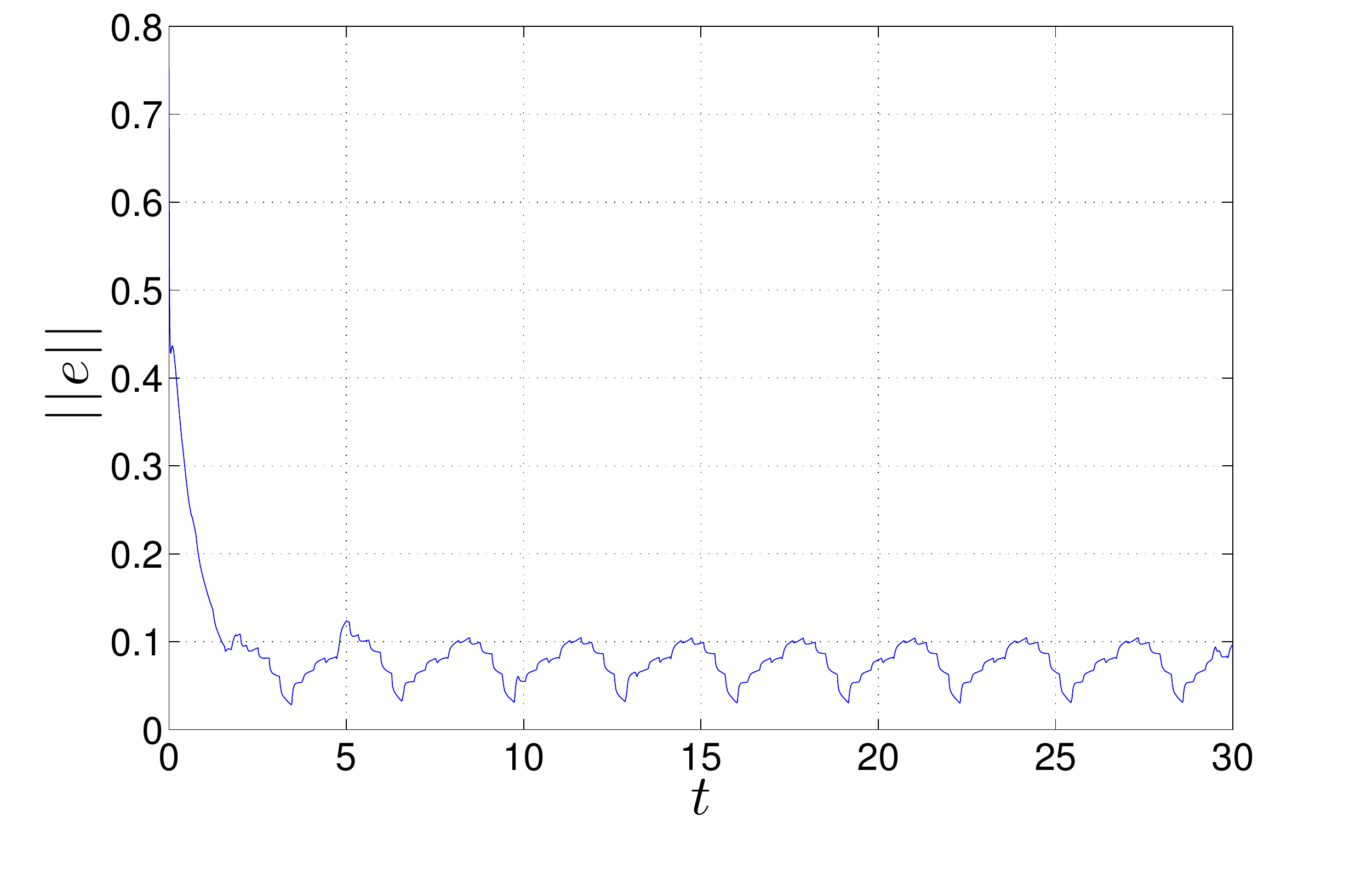}}
\caption{Time evolution of the norm of the synchronization error for the network of Chua circuits: (a) uncoupled case; (b) coupled case.}
\label{fig:norma_errore_caos}
\end{figure}
\begin{figure}[h!]
\centering
\subfigure[]{\label{fig:norma_errore_nonaccoppiati_divergenti}\includegraphics[width=8.6cm]{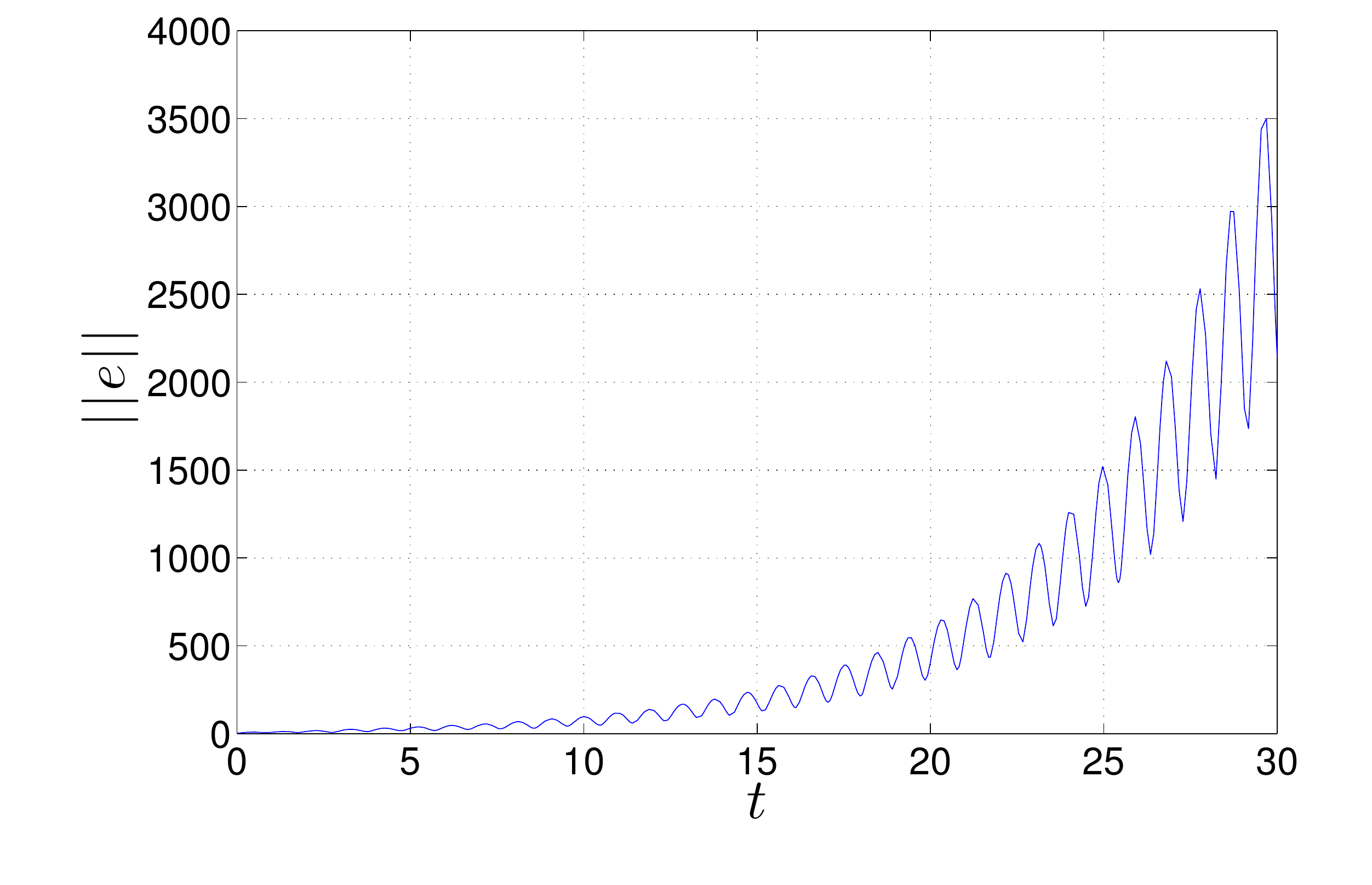}}
\subfigure[]{\label{fig:norma_errore_accoppiati_divergenti}\includegraphics[width=8.6cm]{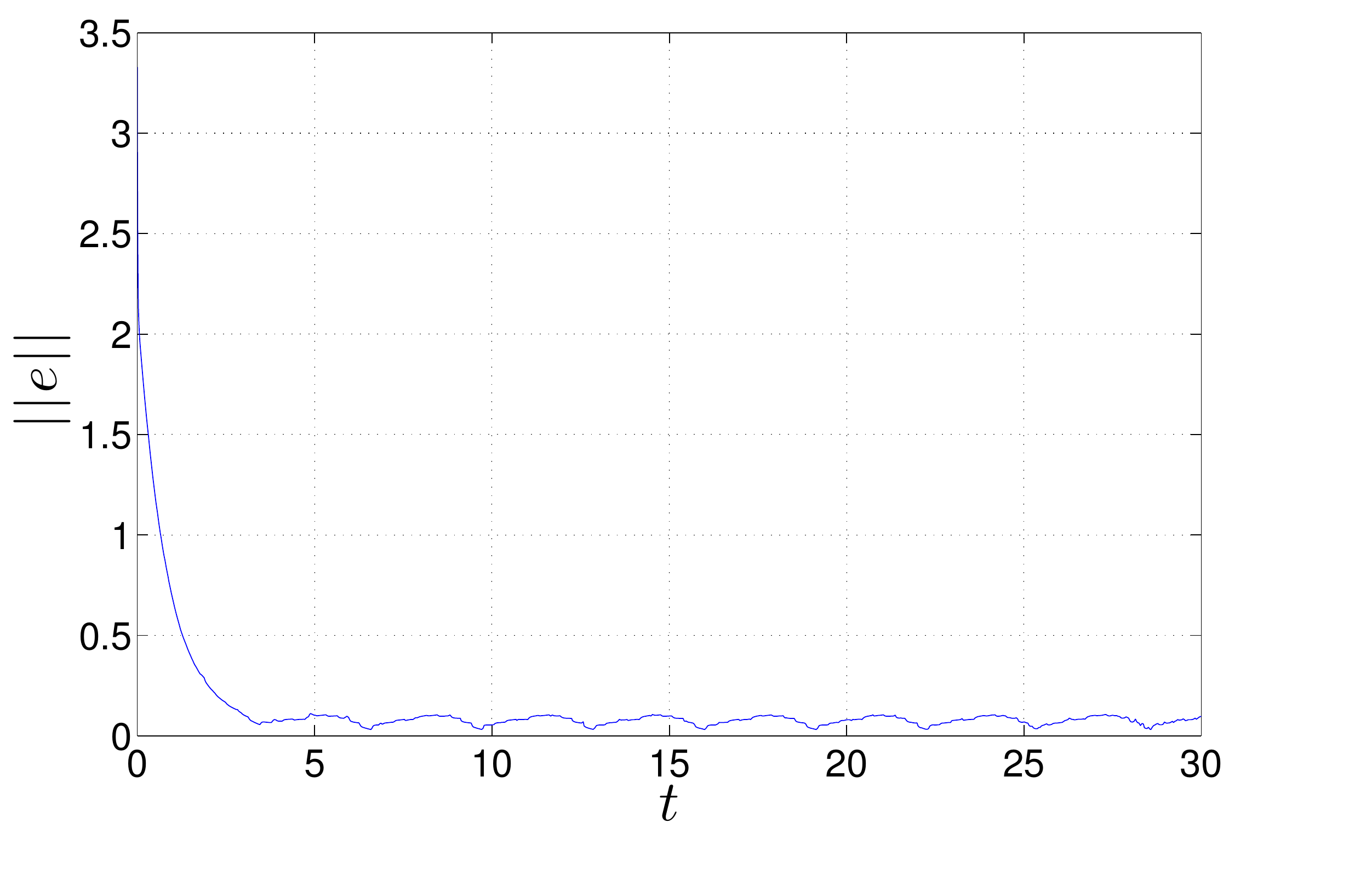}}
\caption{Time evolution of the norm of the synchronization error for a divergent Chua network: (a) uncoupled case; (b) coupled case.}
\label{fig:norma_errore_divergenti}
\end{figure}

\subsection{Networks of chaotic relays}\label{sec:relay}
Several examples of piecewise smooth systems whose dynamics are consistent with Assumption \ref{ass:partiallynonidentical} can be made. In particular, any QUAD system with a piecewise smooth feedback nonlinearity such as relay, saturation or hysteresis is also a QUAD Affine system. 
Here, we consider a network of five classical relay systems,  e.g. \cite{ts:85}, whose dynamics are described by:
\[
\dot{x}_i =Ax_i+B r_i, \quad y_i =C x_i, \quad r_i =-\mathrm{sgn}(y_i),
\]
where

{\singlespacing
\begin{equation}
A = \left[ {\begin{array}{*{20}{c}}
   {1.35} & 1 & 0  \\
   { - 99.93} & 0 & 1  \\
   { - 5} & 0 & 0  \\
\end{array}} \right] , \nonumber
\end{equation}}
\[
B=\left[1, -2, ~ 1\right]^T,
\quad
C=\left[1,~  0, ~ 0\right],
\]
%
As shown in \cite{dibu:07,dijo:01}, with this choice of parameter values,
 each relay exhibits both sliding motion and chaotic behavior.

The Laplacian matrix describing the network topology is

{\singlespacing
\begin{equation}
L = \left[ {\begin{array}{*{20}{c}}
     3  &  -1   &  0 &   -1 &   -1\\
    -1 &    4 &    -1   & -1  &  -1\\
     0 &   -1  &   2   & -1  &  0\\
    -1  &  -1  &  -1  &   4  &  -1\\
    -1   &  -1   &  0 &   -1   &  3\\
\end{array}} \right] , \nonumber
\end{equation}}

while the inner coupling matrix is $\Gamma=I_3$, that is, the nodes are coupled through all the state vector, and so the requirement $\Gamma\in\mathcal{D}^+$ of Corollary \ref{cor:partiallynonidentical} is satisfied. 

It easy to see that the network nodes satisfy Assumption \ref{ass:partiallynonidentical}. In particular, the QUAD term is $h(t,x_i)=Ax_i$ and the affine bounded (switching) term is $g_i(t,x_i)=Br_i$. Hence, we can use Corollary \ref{cor:partiallynonidentical} to obtain an upper bound on the minimum coupling gain guaranteeing $\epsilon$-bounded synchronization. Notice that, choosing for the sake of clarity $P=I_3$, we have
\begin{equation}
\begin{array}{l}
 {\left( {x - y} \right)^T}\left( {h(x) - h(y)} \right) = {\left( {x - y} \right)^T}A\left( {x - y} \right) =  \\ 
 {\left( {x - y} \right)^T}{A_{\mathrm{sym}}}\left( {x - y} \right) \le {\lambda _{\mathrm{max}}}\left( {{A_{\mathrm{sym}}}} \right){\left( {x - y} \right)^T}\left( {x - y} \right),\\ 
 \end{array}\nonumber
\end{equation}
where $A_{\mathrm{sym}}=\frac{1}{2}(A+A^T)$. 

In this example, we have $\lambda_{\mathrm{max}}(A_{\mathrm{sym}})=50$, while $\lambda_2(\Pi)=\lambda_2(L\otimes I_3)=2$. Therefore, with the choice of $P=I_3$ and from (\ref{eq:c_ub}), the lower bound $\tilde{c}$ is $25$.
In our simulation, we set the coupling gain $\hat{c}=50$, while the initial conditions are chosen randomly. Considering that $\xoverline[.75]{M}=2$, and using (\ref{eq:varepsilon_ub}), we can conclude that an upper bound for the norm of the stack error vector $e$  is $\bar{\epsilon}= 0.25$.
In Figures \ref{fig:rete5slidingcaotico_tx2}, \ref{fig:rete5connessa_te2(spessore)_ingrandimento}  and \ref{fig:rete5slidingcaotico}, we compare the behavior of the coupled network with the case of disconnected nodes. In particular, Figures \ref{fig:rete5slidingcaotico_tx2} and \ref{fig:rete5connessa_te2(spessore)_ingrandimento} show the time evolution of the second component of the synchronization error for each node, for both the uncoupled and coupled case, while Figure \ref{fig:rete5slidingcaotico} shows the evolution in the state space. 

Despite the presence of sliding motion, we observe the coupling to be effective in causing all nodes to synchronize, and the bound $\bar{\epsilon}\leq 0.25$ is consistent with what is observed in Figure \ref{fig:rete5slidingcaoticoconnessa_tx2}. 

\begin{figure}[h!]
\centering
\subfigure[]{\label{fig:rete5slidingcaoticononconnessa_tx2}\includegraphics[width=8.6cm]{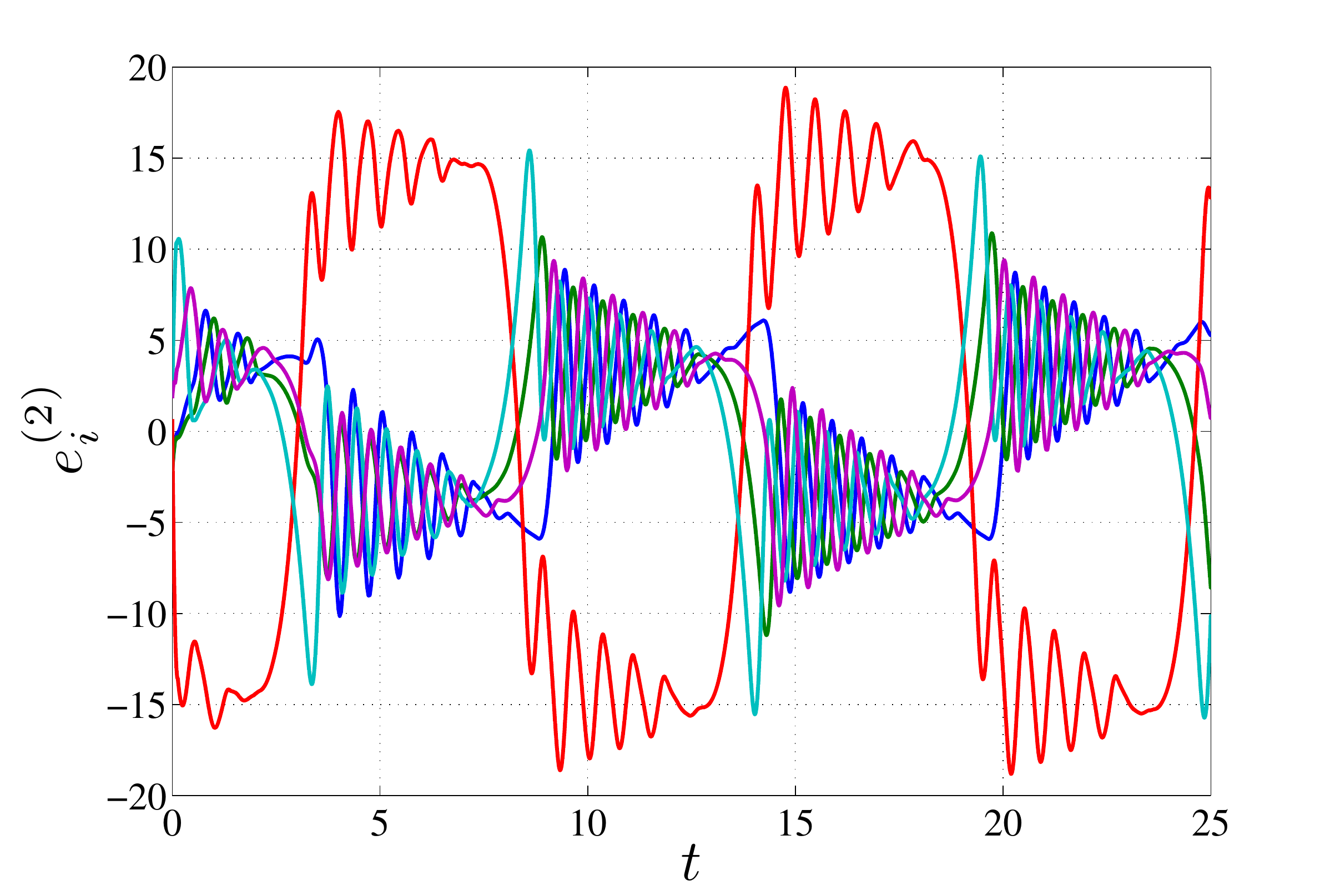}}
\subfigure[]{\label{fig:rete5slidingcaoticoconnessa_tx2}\includegraphics[width=8.6cm]{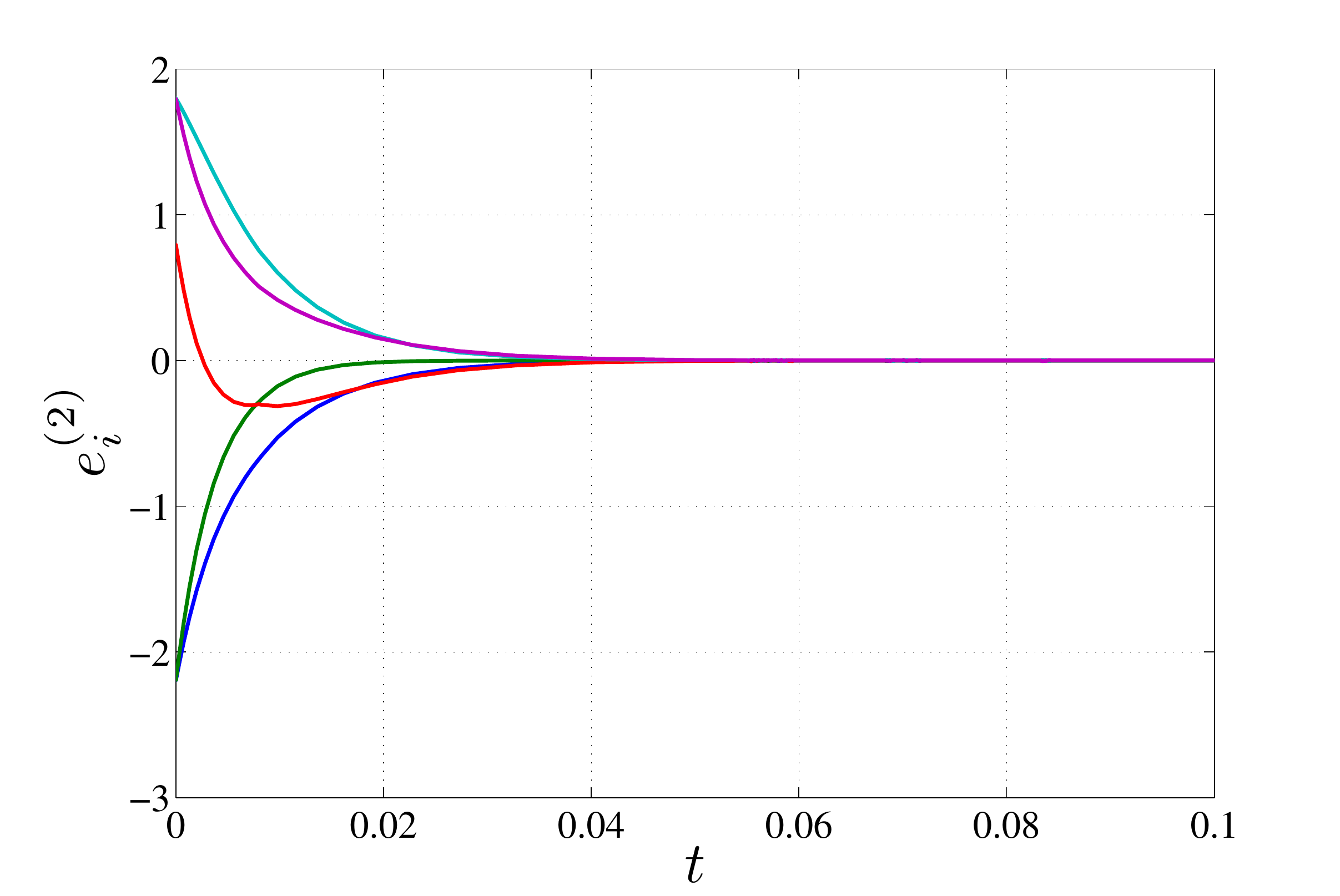}}
\caption{Time evolution of error components $e_{i}^{(2)}(t)$ for the network of chaotic relays: (a) uncoupled case; (b) coupled case.}
\label{fig:rete5slidingcaotico_tx2}
\end{figure}

\begin{figure}[h!]
  \centering 
  \includegraphics[width=0.7\columnwidth, keepaspectratio]{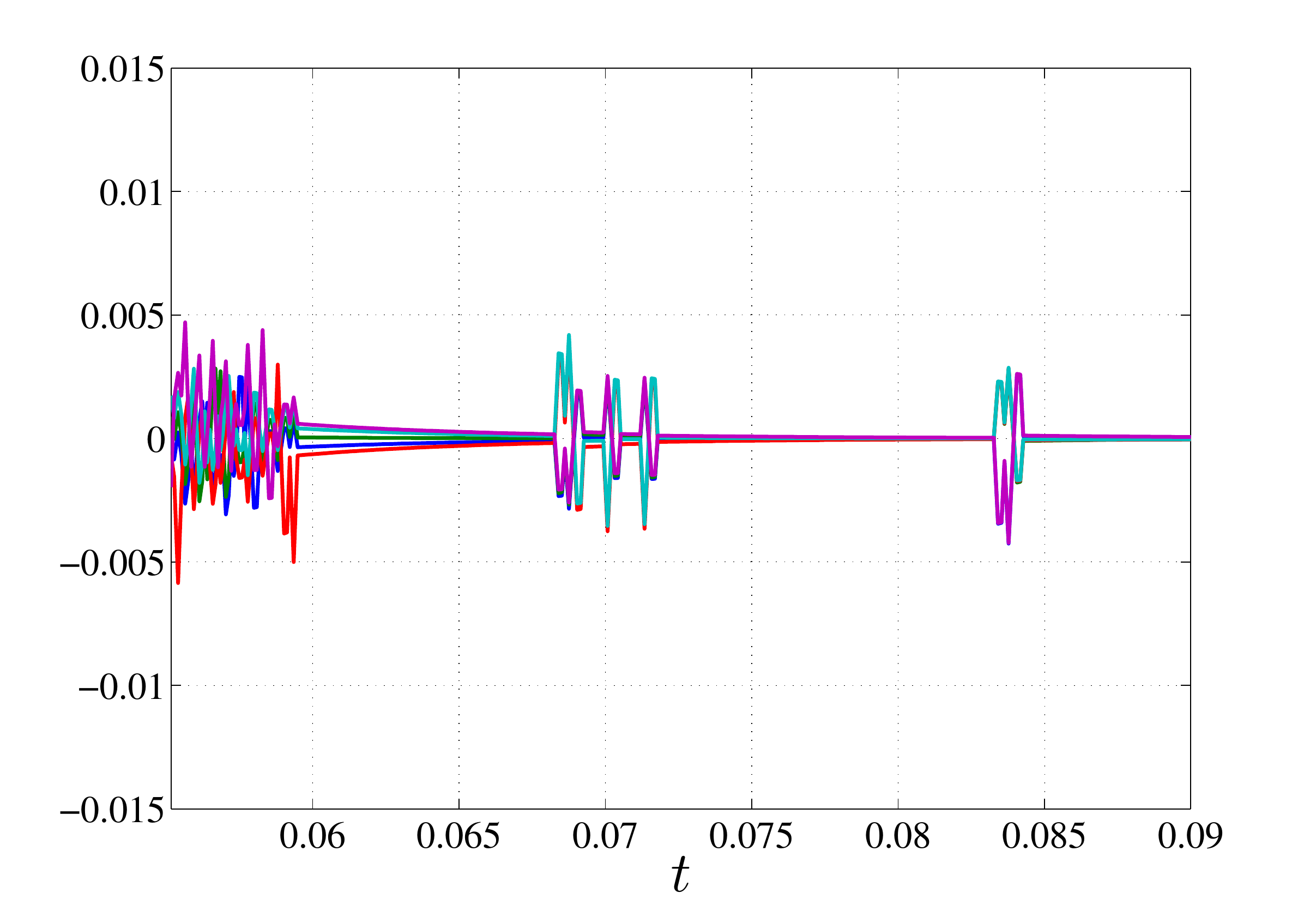}
  \caption{Zoom of the evolution of the error components $e_i^{(2)}(t)$ for $t>0.055s$ showing bounded convergence.}
  \label{fig:rete5connessa_te2(spessore)_ingrandimento}
\end{figure}
\begin{figure}[h!]
\centering
\subfigure[]{\label{fig:rete5nonconnessastatespace}\includegraphics[width=8.6cm]{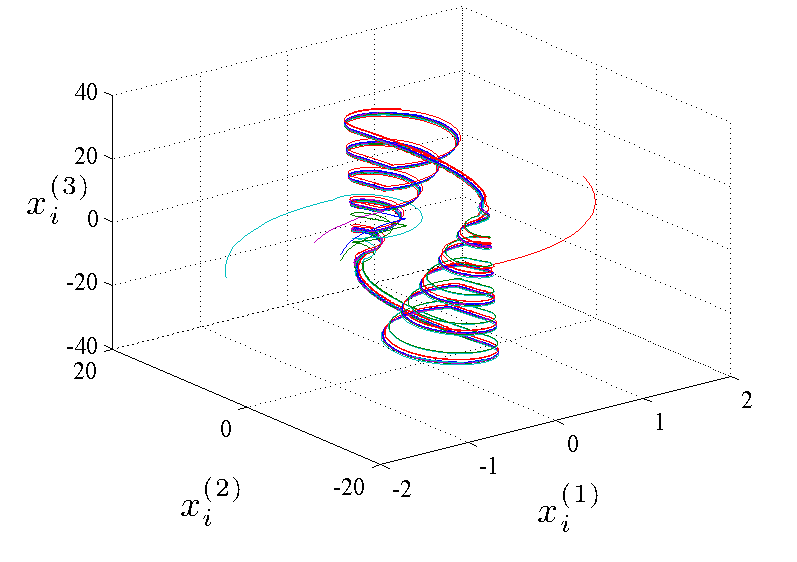}}
\subfigure[]{\label{fig:rete5connessastatespace}\includegraphics[width=8.6cm]{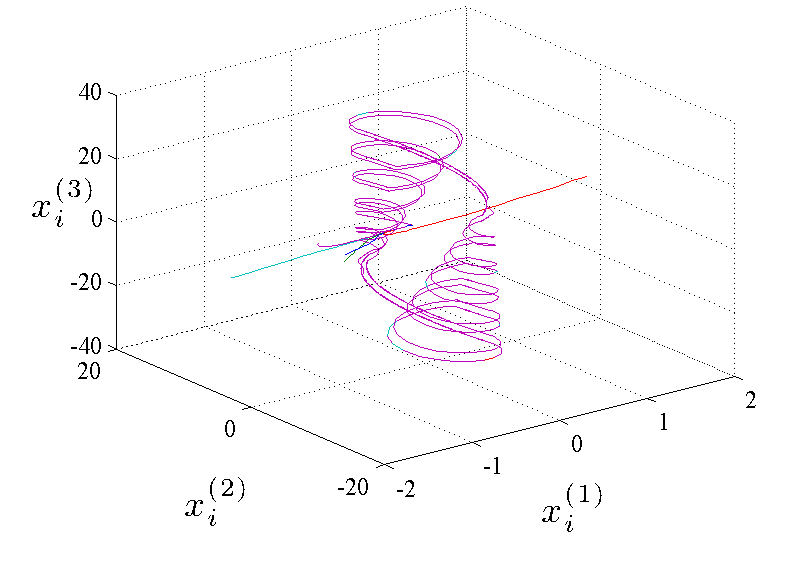}}
\caption{State space evolution for the network of chaotic relays: (a) uncoupled case; (b) coupled case.}
\label{fig:rete5slidingcaotico}
\end{figure}

\subsection{Nonuniform Kuramoto oscillators}\label{sec:kur}
A classical example of nonlinearly coupled heterogeneous systems is the network of nonuniform Kuramoto oscillators, described by equation
\begin{equation}\label{eq:nonuniform_kuramoto}
\dot{\theta}_i= \omega_i + \frac{K}{N}\sum_{j=1}^N a_{ij}\sin(\theta_j-\theta_i),\quad \theta_i\in(-\pi, \pi] \qquad i=1, \dots, N.
\end{equation}
Synchronization of Kuramoto oscillators has been widely  studied in literature, see for instance \cite{mist:05,st:00,camo:08,acbo:05}, where the coupling is generally supposed to be all-to-all, and ad hoc results about synchronization can be found. 

Here, we show how Theorem \ref{th:partiallinonidentical_nonlinear} can be also applied to a network of nonuniform Kuramoto systems \eqref{eq:nonuniform_kuramoto} and it provides an upper bound for the minimum coupling. The error system, defined as in equation \eqref{eq:error}, is given by
\begin{equation}\label{eq:nonuniform_kuramoto_error}
\dot{e}_i= \bar{\omega}_i + \frac{K}{N}\sum_{j=1}^N a_{ij}\sin(e_j-e_i),\quad \theta_i\in(-\pi, \pi] \qquad i=1, \dots, N.
\end{equation}
Now, if we take any initial condition $\theta(0)=[\theta_1(0),\dots,\theta_N(0)]^T$ such that $|\theta_i(0)-\theta_j(0)|<\pi$ for all $i,j=1,\ldots,N$, and if we set $c=K/N$, we have that each system in the network \eqref{eq:nonuniform_kuramoto_error} satisfies Assumption \ref{ass:partiallynonidentical} with $h=0$ and $g_i=\omega_i$.

In this example, we consider a network of $N=4$ nonuniform Kuramoto oscillators, whose topology is described by the Laplacian matrix

{\singlespacing
\begin{equation}
L = \left[ {\begin{array}{*{20}{rrrr}}
     2  &  1   &  0 &   1  \\
    -1 &    2 &    -1   & 0  \\
     0 &   -1  &   2   & -1  \\
    -1   &  0   &  -1 &   2  \\
\end{array}} \right] . \nonumber
\end{equation}}

The individual frequencies $\omega_i$ are taken from a normal distribution , while initial conditions are selected randomly in such that $|\theta_i(0)-\theta_j(0)|<e_{\max}=\pi/3$ for all $i,j=1,\ldots,N$. From Theorem \ref{th:partiallinonidentical_nonlinear}, we obtain an upper bound for the minimum coupling $\tilde{c}=0.73$. 
Figures \ref{fig:kuramoto_errore_disaccoppiati} and \ref{fig:kuramoto_errore_accoppiati} show the error trajectories for each oscillators, respectively for the case of uncoupled and coupled network with $K=3$ ($c=0.75$). 

\begin{figure}[h!]
\centering
\subfigure[]{\label{fig:kuramoto_errore_disaccoppiati}\includegraphics[width=8.6cm]{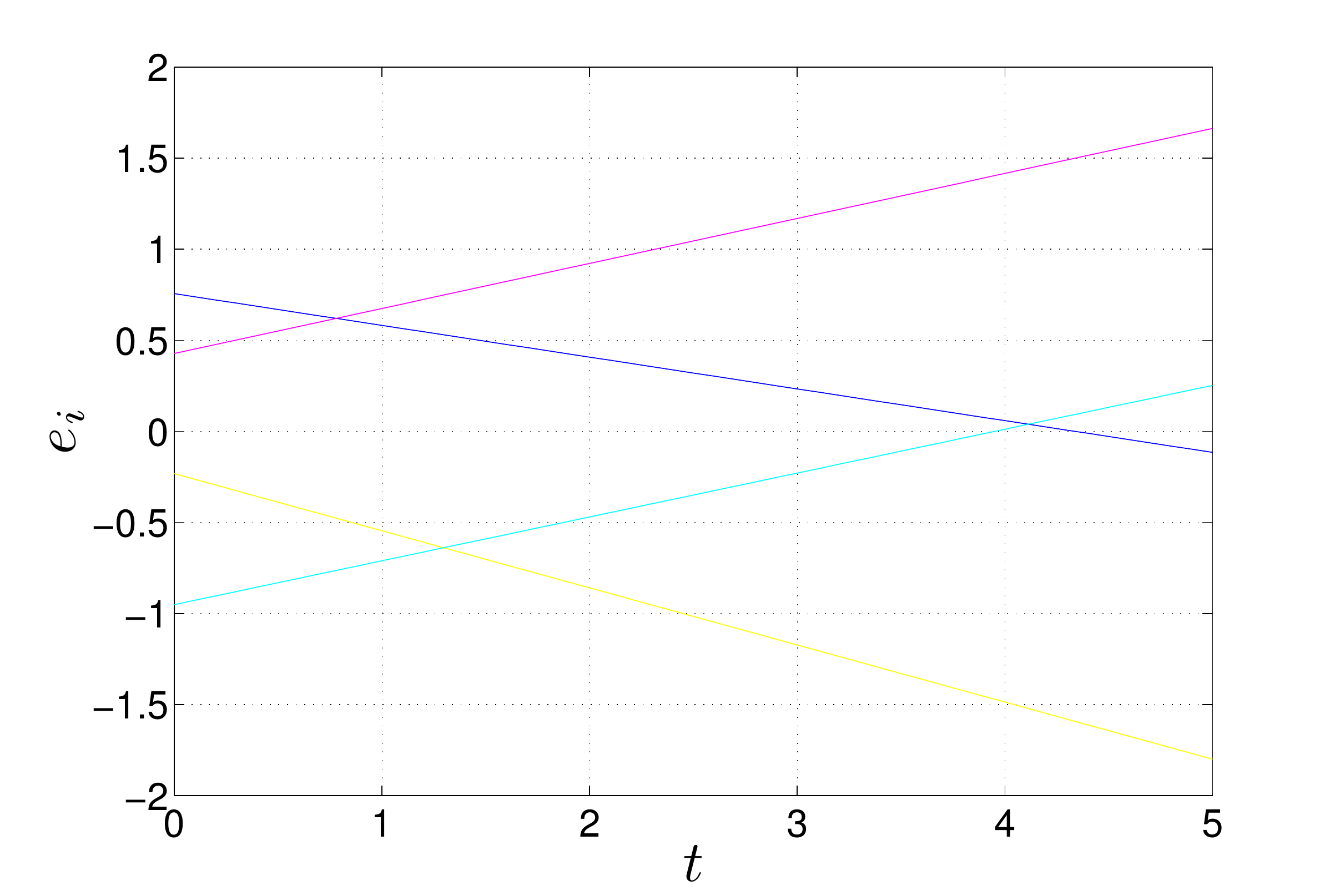}}
\subfigure[]{\label{fig:kuramoto_errore_accoppiati}\includegraphics[width=8.6cm]{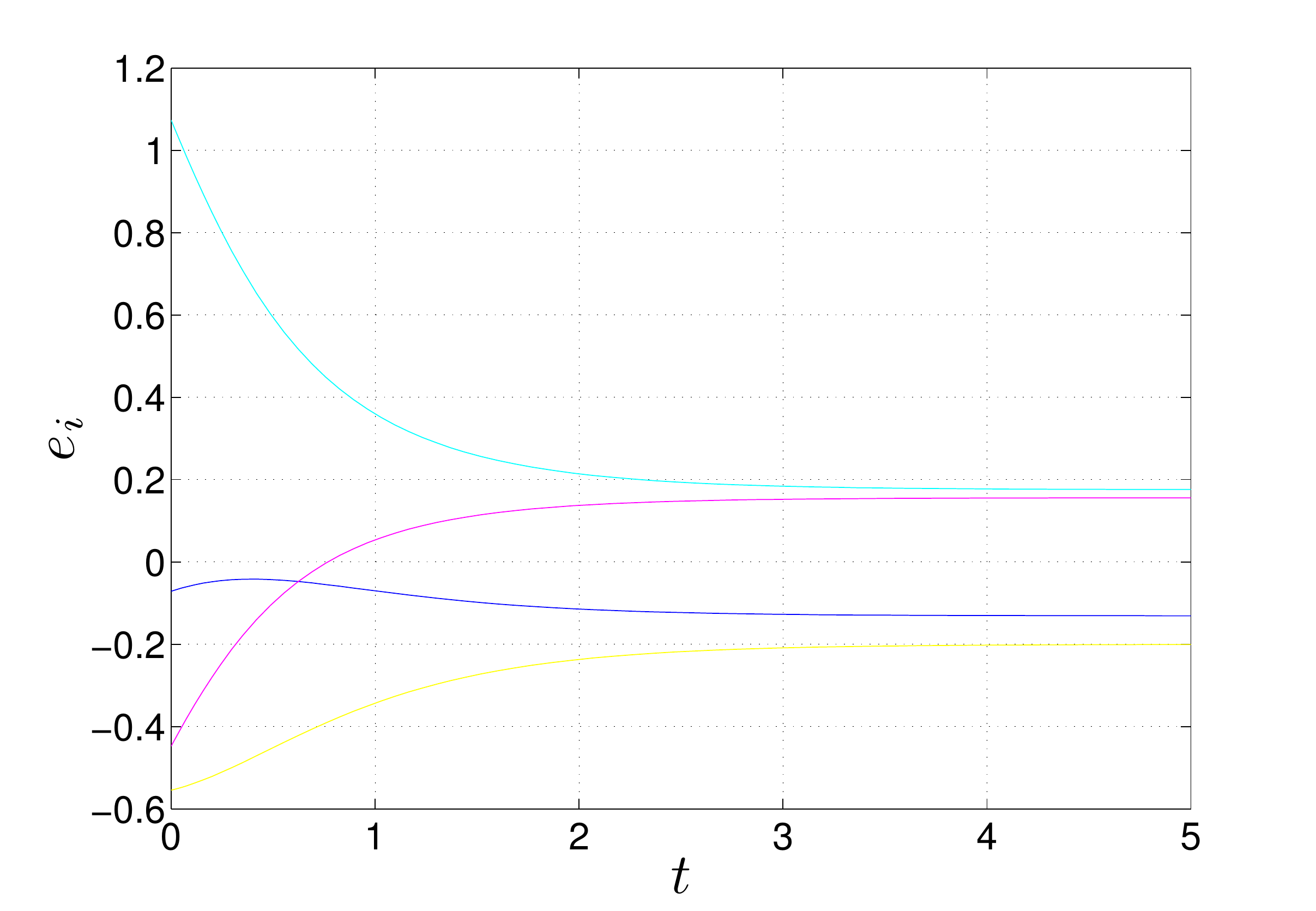}}
\caption{Time evolution of the errors for the network of Kuramoto oscillators: (a) uncoupled case; (b) coupled case.}
\label{fig:kuramoto_errore}
\end{figure}

For the uncoupled case the error diverges, while for the coupled case the global upper bound for the error norm predicted through Theorem \ref{th:partiallinonidentical_nonlinear} is $\bar{\epsilon}=1.04$, which is consistent with the value of $0.34$ that we obtain for the initial conditions given in our numerical simulation.

\section{Conclusions}\label{sec:con}
In this paper, we have presented a framework for the study of convergence and synchronization in networks whose nodes can be both piecewise smooth and/or nonidentical dynamical systems. Specifically, using a set-valued Lyapunov approach, we derived sufficient conditions for global convergence of all nodes towards the same bounded region of their state space and determined bounds on the minimum coupling strength required to achieve bounded synchronization. The residual synchronization bound $\epsilon$ was also estimated as a function of the mismatch between the nodes' dynamics and properties of both the network structure and the coupling function used across the network. 

Differently from previous approaches in the literature, we do not require that the trajectories of the coupled systems are bounded a priori or that conditions of synchrony among switching signals are satisfied. Also, the results presented in the paper allow to investigate convergence in networks of generic piecewise smooth systems including those exhibiting sliding motion, as the chaotic relay systems presented in Section \ref{sec:relay}. This represents a notable advantage of the approach presented in the paper when compared to what is currently available in the literature on networks of hybrid or piecewice smooth system.
The analysis has been performed both for linear and nonlinear coupling protocols and extensively validated on a set of representative numerical examples.

We wish to emphasise that the analytical tools presented in the paper can be used also to synthesise coupling functions and local controllers able to guarantee convergence of a network of interest. In particular, local nonlinear control actions can be added to nodes in a given network to make sure the relevant assumptions we use in our derivations are guaranteed together with appropriately designed coupling protocols. The in-depth investigation of our approach as a tool for the synthesis of distributed nonlinear control strategies for complex networked systems is currently under investigation and will be the subject of future of research.

\appendix
\section{Proof of Theorem \ref{th:partiallynonidentical_partiallycoupled}}\label{app:a}
Consider the following candidate Lyapunov function 
\begin{equation}
 V(e)=\frac{1}{2}e^T (I_N\otimes P) e,\label{eq:lyap}
\end{equation}
where we choose $P\in\mathcal Q^+$. Notice that $P\Gamma \ge 0$ and that the stack equation of the error evolution is given in \eqref{eq:error_stack_linear}. 
Evaluating the derivative of $V$ along the trajectory of such error system, and proceeding as in the step 2 of the proof
of Theorem \ref{th:totallynonidentical}, we get \eqref{eq:lin_tot_nonid_baru_l}. From Assumption \ref{ass:partiallynonidentical}, $h_i(t,\bar{x})=h_j(t,\bar{x})=h(t,\bar{x})$ for all $i,j=1,\ldots,N$. Now, from the properties of the Filippov set-valued function, and adding and subtracting $\sum_{i=1}^N e_i^TP{\mathop h\limits _{\sim}}(t,\bar{x})$, and using the product rule we can write $\mathcal{\overline{U}_L}\subseteq \mathcal{\overline{V}_L}$, where $\mathcal{\overline{V}_L}$ is 
\begin{align*}
\mathcal{\overline V_L}= 
\left\{\sum_{i=1}^N e_i^TP{\mathop h\limits _{\sim}}(t,x_i)+
e^T\left(I_N \otimes P\right)\mathop \Psi\limits _{\sim}+
\sum_{i=1}^N e_i^TP\mathop \xi\limits _{\sim}-
ce^T\left(L\otimes P\Gamma\right)e+
\sum_{i=1}^N e_i^TP{\mathop h\limits _{\sim}}(t,\bar{x})-
\sum_{i=1}^N e_i^TP{\mathop h\limits _{\sim}}(t,\bar{x}) \right\}
\end{align*}
with $\mathop \xi\limits _{\sim}\in\mathcal{F}\left[-\frac{1}{N}\sum_{j=1}^N f_j(t,x_j)\right]$.
As $\sum_{i=1}^N e_i=0$, we have that $\sum_{i=1}^N e_i^TP\mathop \xi\limits _{\sim}=0$ and $\sum_{i=1}^N e_i^TP{\mathop h\limits _{\sim}}(t,\bar{x})=0$. Considering the QUAD Affine assumption, and denoting $v_l$ a generic element of the set $\mathcal{V_L}$, the following inequality holds:
%
\[
\dot{V}(e)\leq v_l\le e^T\left[I_N\otimes W-cL \otimes P\Gamma\right]e+
e^T\left(I_N \otimes P\right)\mathop \Psi\limits ^{\sim}, \qquad \forall \mathop \Psi\limits ^{\sim}\in\mathop \Psi\limits _{\sim}\nonumber
\]
From trivial matrix properties, it follows that
\begin{equation}
\dot{V}(e)\leq e^T\left[I_N\otimes W-c L \otimes P\Gamma\right]e+\sqrt{N}\left\| e \right\|_{2} \left\| I_N\otimes P \right\|_{2} \xoverline[.75]{M},\label{eq:subset_vdot}
\end{equation}
with $\xoverline[.75]{M}=\max_{i=1,\dots, N} M_i$. 
Now, decomposing $e$ in $\tilde{e}_l$ and $\tilde e_{n-l}$ as in the proof of Theorem \ref{th:totallynonidentical}, we obtain
\begin{equation}
\dot{V}\le \left[ \lambda_{\mathrm{max}}(W_l)-c\lambda_2(L\otimes P_l\Gamma_l)\right]\tilde e_l^T \tilde e_l + \lambda_{\mathrm{max}}(W_{n-l})\tilde e_{n-l}^T\tilde e_{n-l}^T +\sqrt{N}||e||_2||P||_2\xoverline[.75]{M} .
\nonumber
\label{eq:subset_vdot2}
\end{equation}
Defining $m(c,P,W)$ according to \eqref{eq:mcp_partiallynonidentical}, and rewriting the synchronization error as $e=a\hat e$, with $\hat e=\frac{e}{\left\| e \right\|_{2}}$, we obtain
\begin{align}
\dot{V} \le &-m(c,P,W)e^T e+\sqrt{N}||e||_2||P||_2\xoverline[.75]{M}\nonumber\\
			  = &-m(c,P,W)a^2+a \xoverline[.75]{M} \sqrt{N}||P||_2.
\label{eq:subset_vdot_final}
\end{align}
If we choose any $c>\tilde{c}$, then we can always select a couple $(P,W)\in\mathcal{PW}$ such that $m(c,P,W)>0$. Then, from \eqref{eq:subset_vdot_final}, we have that $a>\xoverline[.75]{M}\sqrt{N}||P||_2/m(c,P,M)$ implies $\dot{V}<0$.
Hence, network \eqref{eq:genericnetwork_linear}  is $\epsilon$-bounded synchronized with
\begin{equation}
\epsilon \le \frac{\xoverline[.75]{M}\sqrt{N}||P||_2}{m(c,P,M)}.
\label{eq:subset_epsilon}
\end{equation}
Bound \eqref{eq:varepsilon_ub_sub} follows trivially from \eqref{eq:subset_epsilon}.

\section{Proof of Theorem \ref{th:partiallinonidentical_nonlinear}}\label{app:b}
Considering the candidate Lyapunov function $V(e)=\frac{1}{2}e^Te$, and evaluating its derivative as in the step 2 of the proof of Theorem \ref{th:totallynonidentical_nonlinear}, we obtain equation \eqref{eq:oveline_U}. Adding and subtracting $\sum_{i=1}^N e_i^T{\mathop h\limits _{\sim}}(t,\bar{x})$, and using the product rule, we have that
\begin{align*}
\dot{V}(e)\in\mathcal{\overline V_L}= &
\left\{\sum_{i=1}^N e_i^T{\mathop h\limits _{\sim}}(t,x_i)+
e^T\mathop \Psi\limits _{\sim}+
\sum_{i=1}^N e_i^T\mathop \xi\limits _{\sim}-\frac{1}{2}
c\sum_{i=1}^N\sum_{j=i}w_{ij}(e_i-e_j)^T{\mathop \eta\limits _{\sim}}(t,e_i-e_j)
+\sum_{i=1}^N e_i^T{\mathop h\limits _{\sim}}(t,\bar{x}) \right.\\
& \left.
- \sum_{i=1}^N e_i^T{\mathop h\limits _{\sim}}(t,\bar{x})\right\},
\end{align*}
with $\mathop \xi\limits _{\sim}\in\mathcal{F}\left[-\frac{1}{N}\sum_{j=1}^N f_j(t,x_j)\right]$.
As $\sum_{i=1}^N e_i=0$, we have that $\sum_{i=1}^N e_i^T\mathop \xi\limits _{\sim}=0$, and $\sum_{i=1}^N e_i^T{\mathop h\limits _{\sim}}(t,\bar{x})=0$. From Assumptions \ref{ass:partiallynonidentical} and \ref{ass3}, the following inequality holds:
\begin{equation}
\dot{V}(e)\leq e^T\left[I_N\otimes W-cL \otimes \Upsilon\right]e+
e^T\mathop \Psi\limits ^{\sim}, \qquad \forall \mathop \Psi\limits ^{\sim}\in\mathop \Psi\limits _{\sim}.\nonumber
\end{equation}
Notice that, as in the proof of Theorem \ref{th:totallynonidentical_nonlinear}, the upper bound $\tilde{c}$ for the minimum coupling and hypothesis (ii) guarantee that the inequality \eqref{eq:nonlinear_coupling_inequality} is always feasible.
Decomposing the error vector $e$ in the two parts $\tilde{e}_r$ and $\tilde{e}_{n-r}$ and following similar steps as in Theorem \ref{th:partiallynonidentical_partiallycoupled}, the thesis follows. 
%
%

\end{document}